\documentclass[11pt]{article}

\usepackage[T1]{fontenc}
\usepackage[utf8]{inputenc}
\usepackage{lmodern}
\usepackage[margin=1in]{geometry}
\usepackage{amsmath,amssymb,amsthm}
\usepackage{microtype}
\usepackage{graphicx,placeins}
\usepackage{tocloft}
\usepackage{xcolor}
\definecolor{linkblue}{RGB}{0,85,170}
\usepackage[colorlinks=true,linkcolor=linkblue,citecolor=linkblue,urlcolor=linkblue]{hyperref}
\hypersetup{
 pdftitle={Elliptic rearrangement for Poisson equations with mean-zero data},
 pdfauthor={Dorin Bucur and Richard S. Laugesen},
 pdfsubject={Paired comparison for Poisson equations with signed mean-zero data},
 pdfkeywords={Poisson equation; mean-zero data; signed rearrangement; Talenti comparison; gradient comparison; clamped plate; MSC 2020: 35J05, 35J25, 35B45}
}

\newtheorem{theorem}{Theorem}
\newtheorem{proposition}[theorem]{Proposition}
\newtheorem{lemma}[theorem]{Lemma}
\newtheorem{conjecture}[theorem]{Conjecture}
\makeatletter
\let\c@table\c@figure
\makeatother
\theoremstyle{remark}
\newtheorem*{remark}{Remark}
\newcommand{\R}{\mathbb{R}}
\newcommand{\dd}{\,dx}
\newcommand{\norm}[1]{\left\lVert#1\right\rVert}

\title{Elliptic rearrangement for Poisson equations with mean-zero data}
\author{
Dorin Bucur
\thanks{
Universit\'e Savoie Mont Blanc, CNRS, LAMA, 73000 Chamb\'ery, France.
\href{mailto:Dorin.Bucur@univ-smb.fr}{\texttt{<Dorin.Bucur@univ-smb.fr>}}
}
\and 
Richard S. Laugesen
\thanks{
Dept. of Mathematics, Univ.\ of Illinois Urbana--Champaign, Urbana, IL 61801, USA.
\href{mailto:laugesen@illinois.edu}{\texttt{<laugesen@illinois.edu>}}
}
}

\begin{document}
\maketitle

\begin{abstract}
The $L^p$ norm of a sign-changing Dirichlet
solution of Poisson's equation is shown to be bounded by the maximum of the $L^p$ norms of the   
ball solutions obtained by signed rearrangement of the source and its
negative, assuming the source term has mean value zero. This paired comparison on bounded open sets extends Talenti’s elliptic rearrangement principle to solutions that change sign. In dimensions one
and two, the paired comparison holds for all $1\le p\le\infty$, and it holds in every dimension for $p=\infty$. The inequalities are sharp and equality cases are characterized. In dimensions $n \geq 3$, paired comparison can fail for every finite $p$ although it remains valid for $p\ge(3n-1)/4$ in the special case of bang-bang sources with mean zero.

The mean-zero source assumption is crucial when $p<\infty$. Without it, paired comparison can fail even for bang-bang sources. For sources of arbitrary mean in one dimension, however, the $L^\infty$ paired comparison still holds and so does a comparison for $L^p$ norms of the derivative when $1\le p\le3$.
\end{abstract}

\begingroup
\renewcommand{\thefootnote}{}
\begin{NoHyper}
\footnotetext{
\textbf{Keywords.} Poisson equation, mean-zero data,
signed rearrangement, Talenti comparison, clamped plate.
\par\textbf{MSC 2020.} Primary 35J05; Secondary 35J25, 35B45.}
\end{NoHyper}
\endgroup

\setcounter{tocdepth}{1}
\setlength{\cftbeforesecskip}{2pt}
\tableofcontents
\clearpage

\section{Comparison theorems for Poisson equations with mean-zero data}

Can a sign-changing potential with mean-zero data be increased in norm by rearranging its source data into a ball? Which symmetric
rearrangement should be used: decreasing or increasing? Our answer is to try both ways and choose the larger of the resulting norms. This ``paired comparison'' succeeds in dimensions
one and two but fails in dimensions three and higher for every $L^p$, except that it survives for $L^\infty$. 
In one dimension, $L^\infty$ comparison holds true even for
arbitrary mean sources, whereas without mean zero the $L^p$
comparison can fail.

To state these results precisely, suppose $n \ge 1$ and consider a nonempty, bounded open set $\Omega\subset\R^n$, and let 
\[
B=\Omega^\ast
\]
be the ball of the same volume centered at the origin. For a measurable function $f: \Omega\to \R$, denote by $f^\ast$ the \emph{symmetric decreasing rearrangement} on $B$. This $f^\ast$ is radially symmetric and satisfies 
\[
 |\{x\in B:f^\ast(x)>\lambda\}| =|\{x\in\Omega:f(x)>\lambda\}| , 
 \qquad \lambda\in\R .
\]
Notice all the values of $f$ are rearranged, including the negative values. 

Suppose $u\in H^1_0(\Omega)$ and $v\in H^1_0(B)$ are weak solutions of Poisson's equation with Dirichlet boundary condition and data $f \in L^2(\Omega)$ and $f^* \in L^2(B)$, respectively: 
\[
 -\Delta u=f\quad\text{in }\Omega,\qquad
 -\Delta v=f^\ast\quad\text{in }B .
\]
Call $f$ and $f^\ast$ the \emph{sources} for $u$ and $v$. Physically, the source represents a charge or mass density generating the solution potential. 

Provided that $u \ge 0$, Talenti's ``elliptic rearrangement'' comparison principle \cite{Talenti1976} (see also Kesavan
\cite{Kesavan2006}) states that rearrangement of the source increases the solution, in the sense that 
\[
u^*(x) \le v(x) , \qquad x \in B ,
\]
and hence each $L^p$ norm of $u$ is less than or equal to the corresponding $L^p$ norm of $v$, that is, $\norm{u}_{L^p(\Omega)}\le \norm{v}_{L^p(B)}$. 

The nonnegativity hypothesis $u \ge 0$ is crucial for Talenti's proof, serving to keep the level sets of $u$ away from the boundary of $\Omega$. Certainly if $f \ge 0$ then $u \ge 0$ by the maximum principle, but there are many sign-changing functions $f$ for which $u$ is nonnegative too. If both $u$ and $f$ change sign then Talenti's $L^p$ norm conclusion can fail. 

This paper aims to discover to what extent $L^p$ comparison of solutions remains valid when $u$ changes sign. The initial insight is that if the source $f$ has mean zero then there seems no reason to prefer $f$ over $-f$, or $u$ over $-u$. And so we apply symmetric decreasing rearrangement to both $f$ and $-f$ in order to determine which choice yields a larger solution in $L^p$. We find that at least one of the resulting solutions has larger or equal norm than the original $u$. Perhaps surprisingly, the winning comparison function can switch depending on $p$. 

\subsection*{Assumptions and results}
\label{sec:results}

In the theorems below, we examine $f\in L^2(\Omega)$ and assume $u\in H^1_0(\Omega)$ and
$v,w \in H^1_0(B)$ are the (unique) weak Dirichlet solutions of  
\begin{equation} \label{eq:foundations}
 -\Delta u=f\quad\text{in }\Omega,\qquad
 -\Delta v=f^\ast\quad\text{in }B ,\qquad
 -\Delta w=(-f)^\ast\quad\text{in }B ,
\end{equation}
so that $v$ has source $f^\ast$ while $w$ has source $(-f)^\ast$. Here $u$ and $f$ are permitted to change sign. 
For Theorems~\ref{thm:planarlp} and~\ref{thm:bangbang},
and for Theorem~\ref{thm:linfty} when $n\ge2$, we further assume the
source has mean value zero,
\[
\int_\Omega f \dd=0 .
\]
Under this mean-zero hypothesis, the comparison potentials
$v$ and $w$ are nonnegative and radially decreasing and have vanishing
normal derivative on $\partial B$, as noted in
Lemma~\ref{lem:comparison} below.
The natural question we address is whether a paired $L^p$ comparison holds for sign-changing solutions with mean-zero sources:  
is it true that 
\[
\int_\Omega |u|^p \dd \le \max \left\{\int_{B} v^p \dd, \int_{B} w^p \dd \right\} ?
\]
\begin{theorem}[One- and two-dimensional paired $L^p$ comparison]\label{thm:planarlp}
For $n=1,2$, suppose $\Omega\subset\R^n$ is a nonempty bounded open set and $B$ is the ball of the same measure.
Assume $f\in L^2(\Omega)$ with $\int_\Omega f\dd=0$. If $u,v,w$ satisfy the assumptions \eqref{eq:foundations} above then 
\begin{equation}\label{eq:lpquestion}
 \norm{u}_{L^p(\Omega)}\le
 \max\left\{\norm{v}_{L^p(B)},\norm{w}_{L^p(B)}\right\} \qquad \text{for } 1\le p\le\infty .
\end{equation}

Now suppose equality holds and $f\not\equiv0$. If $n=2$ and $p>1$, or else if $n=1$, then $\Omega$ is a ball (possibly with a set of  $H^1$-capacity zero removed, when $n=2$) and, after translation, $u=v$ or $u=-w$. If $(n,p)=(2,1)$, then either the same conclusion holds
or $\Omega$ is the union of two disjoint disks, possibly
with a set of zero $H^1$-capacity removed.
In the latter case, after suitable translations, $u$ on
the two disks equals truncated versions of $v$ and $-w$,
respectively, with their boundary values subtracted to make
them vanish on the disk boundaries.

When $n=2$ and $p=1$, the conclusion \eqref{eq:lpquestion} tightens because $v$ and $w$ have the same $L^1$ norm: 
\begin{equation}\label{eq:L1strong}
 \int_\Omega |u|\dd\le\int_Bv\dd=\int_Bw\dd.
\end{equation}

\end{theorem}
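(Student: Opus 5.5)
We split $u=u_+-u_-$ and apply Talenti's argument separately on the positivity set $\Omega_+=\{u>0\}$ and the negativity set $\Omega_-=\{u<0\}$. On each, $u_\pm$ is a nonnegative Dirichlet solution, at least in the $H^1$ sense, of $-\Delta u_\pm=\pm f$ on $\Omega_\pm$.

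\textbf{Step 1: Talenti on each sign set.} Running Talenti's level-set argument on $\Omega_+$ with $\mu(t)=|\{u>t\}|$ gives
\[
-\frac{d}{dt}\int_{\{u>t\}}|\nabla u|\le \cdots,
\qquad
n\omega_n^{1/n}\mu(t)^{(n-1)/n}\le\Bigl(-\mu'(t)\Bigr)^{1/2}\Bigl(\int_{\{u>t\}}f\Bigr)^{1/2}.
\]
Hence $\int_{\{u>t\}}f\le\int_0^{\mu(t)}f^\ast_{\#}(s)\,ds=F(\mu(t))$, where $F(s)=\int_0^s f^\ast_\#$ and $f^\ast_\#$ is the one-variable decreasing rearrangement. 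So $u_+^\ast$ on the ball of volume $|\Omega_+|$ is dominated by the solution with source $f^\ast$ restricted to that smaller ball. That solution equals $v$ minus its boundary value $v(r_+)$, because $v$ is radial.

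Symmetrically, $u_-^\ast\le w-w(r_-)$ on the ball of volume $|\Omega_-|$, where the source involves $(-f)^\ast$. The constraint is $|\Omega_+|+|\Omega_-|\le|\Omega|$. The mean-zero hypothesis, via Lemma~\ref{lem:comparison}, gives $v,w\ge0$, radially decreasing, with $v'=w'=0$ on $\partial B$. This makes the truncated profiles behave well.

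\textbf{Step 2: reduction to a one-variable inequality.} Writing $a=|\Omega_+|$ and $b=|\Omega_-|$ with $a+b\le|B|$, we get
\[
\int_\Omega|u|^p\le\Phi_v(a)+\Phi_w(b),
\qquad
\Phi_v(a)=\int_{|x|^n\omega_n<a}\bigl(v-v(r_a)\bigr)^p.
\]
It remains to show $\Phi_v(a)+\Phi_w(b)\le\max\{\Phi_v(|B|),\Phi_w(|B|)\}$. The natural route is to prove that $a\mapsto\Phi_v(a)$ is superadditive, or that $\Phi_v(a)/a$ is nondecreasing. Then
\[
\Phi_v(a)+\Phi_w(b)\le\frac{a}{|B|}\Phi_v(|B|)+\frac{b}{|B|}\Phi_w(|B|)\le\max\{\Phi_v(|B|),\Phi_w(|B|)\}.
\]

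For $n=1,2$ this is the crux. In the plane, in the variable $s=\pi r^2$, the radial profile satisfies $-(4\pi s\,v_s)_s=f^\ast_\#(s)$. Hence $-v_s=F(s)/(4\pi s)$, and with $G(s)=F(s)/s$,
\[
v(\sigma)-v(a)=\frac1{4\pi}\int_\sigma^a G(\tau)\,d\tau .
\]
Because $f^\ast_\#$ is decreasing, $G$ is a decreasing nonnegative average, and $G\ge0$ on $[0,|B|]$ by mean zero. Monotonicity of $\Phi_v(a)/a$ then follows from $\frac{d}{da}\Phi_v(a)=p\int_0^a(v-v(a))^{p-1}(-v_s(a))\,d\sigma$, compared with $\Phi_v(a)/a$ by a Chebyshev-type argument using that $G$ decreases. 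In one dimension, $-v_s=F(s)$ directly and the same scheme applies.

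For $p=1$ in the plane, $\Phi_v(a)=\frac1{4\pi}\int_0^a F(\tau)\,d\tau$. Then $\Phi_v(|B|)=\Phi_w(|B|)$ since $\int_0^{|B|}F=\int_0^{|B|}F_{-f}$ by mean zero; integrate by parts, noting $\int s f^\ast_\#$ and $\int s(-f)^\ast_\#$ agree up to the mean-zero reflection. Also $\Phi_v(a)+\Phi_w(b)\le\Phi_v(|B|)$ follows from $F\ge0$ and an interchange argument. This yields \eqref{eq:L1strong}. For $p=\infty$, the argument reduces to $v(0)-v(a)\le v(0)$, which is immediate.

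\textbf{Step 3: equality cases.} Equality forces equality in the isoperimetric step for a.e.\ level, and equality in the Chebyshev or superadditivity step. For $p>1$ the latter is strict unless $b=0$ or $a=0$ and $a=|B|$, so $u$ has one sign on a full-measure ball. Talenti's equality case then gives $\Omega$ a ball up to capacity zero and $u=v$ or $u=-w$. For $(n,p)=(2,1)$, $\Phi$ is linear enough that $a+b=|B|$ with both positive is allowed, which produces the two-disk configuration.

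\textbf{Main obstacle.} The main obstacle is the superadditivity of $\Phi_v$ for general $p$ and sign-changing $f^\ast$. The difficulty is that $F$ is not monotone: it increases and then decreases to $0$. So we need $G=F/s$ decreasing to be exactly enough. The dimension restriction should enter through the $1/s$ weight, which becomes $s^{-2+2/n}$ in dimension $n$ and loses the needed monotonicity for $n\ge3$.
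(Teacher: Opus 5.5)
\paragraph{Gap in Step 2.} Your Step 1, the planar $p=1$ interchange argument and the $p=\infty$ case are sound and follow the paper's route. Step 2, however, is all you have for $1<p<\infty$ in the plane and for every finite $p$ in one dimension, and it rests on a false claim.

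Your displayed bound needs $\Phi_v(a)\le\frac{a}{|B|}\Phi_v(|B|)$. This fails for every $1\le p<\infty$ and every nonzero mean-zero source, and the cause is exactly the zero normal derivative you invoke. By your own formula, $\Phi_v'(a)=p\,(-v_s(a))\int_0^a(v-v(a))^{p-1}\,d\sigma$. Since $-v_s(|B|)$ is a multiple of $F(|B|)=\int_\Omega f\,dx=0$, you get $\Phi_v'(|B|)=0$ and
\[
 \frac{d}{da}\Bigl(\frac{\Phi_v(a)}{a}\Bigr)\Big|_{a=|B|}=-\frac{\Phi_v(|B|)}{|B|^2}<0 .
\]
Hence $\Phi_v(a)/a>\Phi_v(|B|)/|B|$ for all $a<|B|$ close to $|B|$.

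In the plane with $p=1$ you can see this directly. There $\Phi_v(a)/a$ is a multiple of the running average $\frac1a\int_0^aF$, which must fall near $|B|$ because $F$ drops to $0$ there. A concrete instance: in one dimension with $|B|=1$, $F(s)=\min\{s,1-s\}$ and $p=2$, one finds $\Phi_v(0.9)\approx1.05\cdot 0.9\,\Phi_v(1)$.

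No Chebyshev argument can repair this. Monotonicity of $\Phi_v(a)/a$ would require $a\,p\,(-v_s(a))\int_0^a(v-v(a))^{p-1}\ge\int_0^a(v-v(a))^p$, and the left side vanishes at $a=|B|$. The decrease of $G=F/s$ is true, but it is perfectly compatible with $F$ vanishing at $|B|$, so it does not help. Superadditivity of $\Phi_v$ alone would not control the mixed sum $\Phi_v(a)+\Phi_w(b)$ either.

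\paragraph{What is missing.} In the example above the theorem's inequality still holds, with $\Phi_v(0.9)+\Phi_w(0.1)<\Phi_v(1)$. The excess of $\Phi_v(a)$ over its linear share is more than offset by the deficit of $\Phi_w(|B|-a)$, which is tiny. So the two pieces cannot be bounded separately. The missing idea is to treat $J(a)=\Phi_v(a)+\Phi_w(|B|-a)$ as a single function.

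Mean zero gives $\widetilde F(|B|-a)=F(a)$, so the slopes of both truncated profiles at the cut are governed by the same value $F(a)$. Consequently $J'(a)$ equals $F(a)$ times a dimension-weighted difference of the $(p-1)$-moments of the two truncated profiles. The real work is to show that this difference changes sign exactly once, from negative to positive, so that $J$ decreases and then increases.
\begin{itemize}
\item In one dimension the difference is strictly increasing by direct differentiation.
\item In the plane with $1<p\le 2$ (normalizing $|B|=1$), the paper couples the two profiles through concavity of $F$ across the cut, $(1-a)F(as)+aF(1-(1-a)s)\le F(a)$, combined with Jensen's inequality.
\item In the plane with $p\ge2$, the paper proves strict monotonicity of the ratio of the normalized $(p-1)$-moments of the two rescaled profiles, by comparing $F$ with its supporting lines.
\end{itemize}

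\paragraph{Consequences for Step 3.} This strict single crossing is also what forces $|\Omega_+|\in\{0,|B|\}$ in the equality analysis, so your Step 3 inherits the gap. Moreover, for a signed source the rigidity does not follow from Talenti's classical equality case. It needs an additional argument, for example the energy identity giving equality in P\'olya--Szeg\H{o}, followed by the Brothers--Ziemer theorem.
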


The mean-zero hypothesis on the source $f$ is essential
for finite $p$, even in dimension one, as
the explicit bang-bang family in
Proposition~\ref{prop:finitep-arbitrarymean} shows.

\begin{remark}[Both rearrangements are needed]

The norm of $u$ can be greater than the norm of $v$ or $w$ (just not both). An example is presented in Section~\ref{sec:single-source-nonsymmetric} and Figure~\ref{fig:endpointfunctionals}. Hence it is essential that Theorem \ref{thm:planarlp} considers the maximum over $v$ and $w$. Nonetheless, if one knows the nodal volume fraction
$\alpha=|\{u>0\}|/|\Omega|$ then one can identify a comparator
whose norm is guaranteed to bound that of $u$. The criterion is particularly transparent in the planar case with $p=2$ in Section \ref{sec:l2proof}, where a threshold $c$ depending 
only on $f^* \not\equiv 0$ is constructed such that if $\alpha \leq c$ then $u$ has smaller or equal $L^2$-norm than $w$ while if $\alpha \geq c$ then $u$ has smaller or equal norm than $v$.  
\end{remark}

\begin{remark}[The winner can switch with $p$]
For some sources $f$, the solution $v$ coming from $f^*$ has larger
norm at some values of $p$ than the solution $w$ coming from $(-f)^*$, while at other $p$-values the
order is reversed; see Section~\ref{sec:single-source-nonsymmetric}. 
\end{remark}

\begin{remark}[Equality can occur]
Equality can certainly hold in the theorem for nontrivial mean-zero data. For example, suppose $\Omega$ is a ball $B$ and take the sign-changing data to be $f=1$ on the concentric ball of
volume $|\Omega|/2$ and $f=-1$ on the remaining annulus of the same volume. Then
$f=f^\ast=(-f)^\ast$ and hence $u=v=w >0$ in $B$, so that equality holds in the theorem. For generic domains and data, though, one expects inequality \eqref{eq:lpquestion} to hold strictly, including in situations where $u$ does not change sign. 
\end{remark}

Theorem~\ref{thm:planarlp} implies, in our forthcoming work \cite{BucurLaugesenForthcoming}, a new proof of Nadirashvili's theorem that the disk minimizes the fundamental tone of a vibrating clamped plate in two
dimensions (Rayleigh's conjecture); see the Literature section below. More broadly, we
establish in \cite{BucurLaugesenForthcoming} a whole family of nonlinear analogues involving $L^p$ norms of the
plate displacement and $L^q$ norms of its Laplacian.
\begin{theorem}[Paired $L^\infty$ comparison in every dimension]\label{thm:linfty}
For $n \ge 1$, suppose $\Omega\subset\R^n$ is a nonempty bounded open set and $B$ is the ball of the same volume.
Assume $f\in L^2(\Omega)$,
and when $n\ge2$ assume $\int_\Omega f\dd=0$.
If $u,v,w$ are taken to satisfy \eqref{eq:foundations} then
\begin{equation}\label{eq:linftycomparison}
 \norm{u}_{L^\infty(\Omega)}
 \le\max\{\norm{v}_{L^\infty(B)},\norm{w}_{L^\infty(B)}\}.
\end{equation}
Further, when $n=1$, $v$ gives the maximum in \eqref{eq:linftycomparison} if
$\int_\Omega f\dd\ge0$ and $w$ gives the maximum if
$\int_\Omega f\dd\le0$, while both give the maximum if $\int_\Omega f\dd=0$.

If equality holds and $\norm{u}_{L^\infty(\Omega)}$ is finite and positive, then $\Omega$
is a ball (possibly with a set of zero $H^1$-capacity removed, when $n \geq 2$) and furthermore $u=v$ or $u=-w$ after suitably translating $\Omega$. 
\end{theorem}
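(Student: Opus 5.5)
The plan is to reduce the $L^\infty$ comparison to Talenti's principle applied separately to the positive and negative parts of $u$ on their nodal domains, and then compare the two resulting ball problems with $v$ and $w$ through the radial structure of the comparators. By symmetry ($f\mapsto -f$ swaps $u\mapsto -u$ and $v\leftrightarrow w$) it suffices to bound $\sup u^+$.

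Let $\Omega_+=\{u>0\}$, an open set when $u$ is continuous (for $f\in L^2$ one can approximate by smooth $f$ and pass to the limit). On $\Omega_+$, $u$ is a nonnegative solution of $-\Delta u=f$ with zero boundary values, so Talenti's argument applies there. I would run it directly: for $t>0$ let $\mu(t)=|\{u>t\}|$. The usual coarea, isoperimetric and Cauchy--Schwarz chain gives
\[
n^2\omega_n^{2/n}\mu(t)^{2-2/n}\le\Big(\int_{\{u>t\}}f\Big)(-\mu'(t)),\qquad \int_{\{u>t\}}f\le\int_0^{\mu(t)}f^\ast(s)\,ds=:F(\mu(t)).
\]
Note $F(m)>0$ for $0<m<|\Omega|$ and $F(|\Omega|)=\int f$. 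Integrating $dt\le F(\mu)\,(-d\mu)/(n^2\omega_n^{2/n}\mu^{2-2/n})$ from $t=0$ to $\sup u$ bounds $\sup u$ by
\[
\int_0^{|\Omega_+|}\frac{F(m)}{n^2\omega_n^{2/n}m^{2-2/n}}\,dm\le\int_0^{|\Omega|}\frac{F(m)}{n^2\omega_n^{2/n}m^{2-2/n}}\,dm=v(0)=\|v\|_\infty,
\]
since $F\ge0$ on $[0,|\Omega|]$. This last fact uses mean zero, or $\int f\ge0$. The radial formula for $v$ is $v(r)=\int_{|B_r|}^{|\Omega|}F(m)\,dm/(n^2\omega_n^{2/n}m^{2-2/n})$, and $v$ is radially decreasing and nonnegative by Lemma~\ref{lem:comparison}. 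Hence $\sup u\le\|v\|_\infty$ and $\sup(-u)\le\|w\|_\infty$.

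For $n=1$ with arbitrary mean, I would argue directly instead. For $\int f\ge0$, the integrand $F\ge0$ is still guaranteed: $F$ is concave with $F(0)=0$ and $F(|\Omega|)\ge0$. So $\sup u\le v(0)$ as above. For the negative side, $\sup(-u)$ is bounded by the same integral for $G(m)=\int_0^m(-f)^\ast$ over $[0,|\Omega_-|]$. I must show that this is at most $v(0)$, using $G(m)=F(|\Omega|-m)-F(|\Omega|)\le F(|\Omega|-m)$. In 1D the weight $m^{2-2/n}=1$ is constant, so $\int_0^{|\Omega_-|}G\le\int_0^{|\Omega|}F$ follows after the change of variables $m\mapsto|\Omega|-m$. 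Thus $v$ dominates, and when the mean is zero the formula shows $v(0)=w(0)$ by the same substitution.

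Equality is the main obstacle. It forces equality in the isoperimetric inequality for a.e.\ level set of $u$ (if $u$ attains the maximum sign). It also forces $|\Omega_+|=|\Omega|$ whenever $F>0$ on the gap, so $u\ge0$ a.e. Finally, it forces $f$ restricted to superlevel sets to realize $F$ and $|\nabla u|$ to be constant on level sets. I would then follow the standard Talenti rigidity argument (as in Kesavan) to conclude that the level sets are concentric balls, so that $u$ is radial and $u=v$ up to translation, and that $\Omega$ equals a ball up to a capacity-zero set. The delicate points are handling plateaus of $u$ where $\mu'$ is singular, and the possible translation of level sets' centers. The latter is excluded by $|\nabla u|>0$ a.e.\ away from the maximum, which gives nested balls that must be concentric.
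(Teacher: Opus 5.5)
Your inequality argument is the paper's. The Talenti chain on the superlevel sets of $u_+$ gives $\sup u_+\le V_\alpha(0)$, $F\ge0$ lets you enlarge $\alpha$ to $1$, and $u_-$ is handled the same way. For $n=1$ with arbitrary mean you rerun the same chain, whereas the paper uses the explicit Green function and a layer-cake bound. Your alternative works, but it needs three small repairs.

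First, $G(s)=F(|\Omega|-s)-F(|\Omega|)$ is negative near $s=|\Omega|$ when the mean is positive. So the change of variables $s=\mu(\tau)$ should be done with $G_+$. This is legitimate because $G(\mu(\tau))\ge\int_{\{u_->\tau\}}(-f)\,dx>0$ at every regular level, and $G_+(s)\le F(|\Omega|-s)$ then finishes.

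Second, the assertion that $v$ gives the maximum also requires $\|w\|_\infty\le\|v\|_\infty$, which you do not prove. When the mean is positive, $w$ is negative near $\partial B$, so $\|w\|_\infty$ is not simply $w(0)$. The clean fix is the paper's: apply your bound to $-w$, whose source $-(-f)^\ast$ has the same nonnegative mean and decreasing rearrangement $f^\ast$.

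Third, for equality with positive mean you must check that the negative-side bound is strict, so that the maximum is carried by $u_+$; you cannot simply choose the sign. It is strict, because $G_+(s)<F(|\Omega|-s)$ wherever the latter is positive.

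The real gap is the rigidity step for $n\ge2$. You list the right consequences of equality: ball superlevel sets, $|\nabla u|$ constant on level sets, and (since $F>0$, equality in the change of variables) absolute continuity of $\mu$, which disposes of the plateaus you worry about. But you attribute concentricity to $|\nabla u|>0$, and that does not imply it. A function with $\{u>t\}=B_{1-t}(\varepsilon t e_1)$, $0<\varepsilon<1$, has nonvanishing gradient away from its maximum, yet nested, non-concentric level spheres.

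What stops the centres $c(t)$ from drifting is the constancy $|\nabla u|=g(t)$ on each sphere, combined with absolute continuity of $\mu$. For instance, coarea gives
\[
 \frac{d}{dt}\int_{\{u>t\}}x\,dx
 =-\frac{1}{g(t)}\int_{\partial B_{r(t)}(c(t))}x\,dH^{n-1}
 =\mu'(t)\,c(t),
\]
while $\int_{\{u>t\}}x\,dx=\mu(t)c(t)$. Hence $\mu(t)c'(t)=0$. With this, the saturated ODE for $\mu$ (giving $u^\ast=v$), and the capacity argument for $\Omega$, your route closes and is more self-contained than the paper's.

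The paper proceeds differently. It upgrades equality of heights to equality of profiles $U_+=V_1$ through Kesavan's differential comparison $0\le-U_+'\le-V_1'$. It then tests the equation with $u_+$ to get equality in P\'olya--Szeg\H{o}, and invokes Brothers--Ziemer in the Ferone--Volpicelli form, with the nondegeneracy $|\nabla v|>0$ supplied by $F>0$. That is exactly what your appeal to ``the standard Talenti rigidity argument'' would have to import for signed $f$.
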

The mean-zero hypothesis on $f$ is necessary when $n \geq 2$, as one can construct examples with nonzero source mean for which the $L^\infty$ norm of $u$ exceeds the norms of $v$ and $w$. 

The solution $u$ is not, in general, bounded when $f\in L^2(\Omega)$ and $n \geq 4$. Conclusion \eqref{eq:linftycomparison} of the theorem implies that if $u$ is not bounded then either $v$ or $w$ is not bounded; while if both $v$ and $w$ are bounded then $u$ is bounded too. By elliptic regularity, all three solutions are indeed bounded if also $f \in L^q(\Omega)$ for some $q>n/2$, which is automatic when $n=1,2,3$ since $f \in L^2(\Omega)$.

\begin{remark}[$L^p$ comparison up to a factor, in every dimension]
For $n\ge1$ and $1\le p<\infty$, with mean-zero source
the method gives a paired comparison with constant $2^{1/p}$, as proved at the end of Section~\ref{sec:linfty}:
\begin{equation} \label{eq:2p}
 \norm{u}_{L^p(\Omega)}
 \le 2^{1/p} \max\{\norm{v}_{L^p(B)},\norm{w}_{L^p(B)}\}.
\end{equation}
Letting $p\to\infty$ recovers the
$L^\infty$ comparison of \eqref{eq:linftycomparison}. For $n=1,2$, Theorem~\ref{thm:planarlp} and its equality examples
show that the sharp constant is $1$ rather than $2^{1/p}$. For $n\ge3$, the constant $2^{1/p}$ is sharp when
$n/(n-2)\le p<\infty$, as shown by the concentration construction
in Section~\ref{sec:highercounterexamples}: the ratio of the original
norm to the paired comparison norm has lower limit at least
$2^{1/p}$ as $q\downarrow0$, while \eqref{eq:2p} bounds the ratio
above by $2^{1/p}$. We do not know the sharp constant for
$1\le p<n/(n-2)$ when $n \geq 3$, but it is strictly larger than $1$.
\end{remark}

Although Theorem \ref{thm:planarlp} fails for $p<\infty$ in higher dimensions, it does hold on a restricted class of data when $p \geq (3n-1)/4$.

\begin{theorem}[Bang-bang comparison in higher dimensions]
\label{thm:bangbang}
For $n \ge 3$, suppose $\Omega\subset\R^n$ is a nonempty bounded open set and $B$ is the ball of the same volume. Suppose the source $f$ is of bang-bang type with mean zero: for some measurable $E\subset\Omega$ and some
$A_0,B_0>0$,
\begin{equation*}
 f=A_0\mathbf1_E-B_0\mathbf1_{\Omega\setminus E},
 \qquad \int_\Omega f\dd=0.
\end{equation*}
If $u,v,w$ satisfy assumption \eqref{eq:foundations} then 
\begin{equation*}
\begin{split}
\norm{u}_{L^p(\Omega)}
& \le
 \max\left\{\norm{v}_{L^p(B)},\norm{w}_{L^p(B)}\right\} \quad \qquad \text{for\quad} \frac{3n-1}{4}\le p\le\infty , \\
 & = 
 \begin{cases}
\norm v_{L^p(B)} & \text{if } |E|\le|\Omega\setminus E| , \\
\norm w_{L^p(B)} & \text{if } |E|\ge|\Omega\setminus E| .
 \end{cases}
 \end{split}
\end{equation*}
{
Equality holds if and only if $\Omega$ is a ball up to a set of zero
$H^1$-capacity and, after translation,
\[
 \begin{cases}
 u=v& \text{if } |E|<|\Omega\setminus E|,\\
 u=v\ \text{or } u=-w& \text{if } |E|=|\Omega\setminus E|, \\
 u=-w& \text{if } |E|>|\Omega\setminus E|.
 \end{cases}
\]
}
\end{theorem}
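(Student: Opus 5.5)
We would prove Theorem~\ref{thm:bangbang} by splitting $u$ into its positive and negative parts and applying a Talenti-type comparison to each nodal piece separately, then optimizing over how the data are distributed between the two pieces. Write $\Omega_+=\{u>0\}$ and $\Omega_-=\{u<0\}$, with $a=|\Omega_+|$ and $b=|\Omega_-|$, so $a+b\le|\Omega|$. On $\Omega_+$ the function $u^+$ solves $-\Delta u^+=f$ with zero boundary values. Talenti's level-set argument then gives $(u^+)^\ast\le V_a$ on the ball $B_a$ of volume $a$. Here $V_a$ is the ball solution with source $(f|_{\Omega_+})^\ast$, but we keep track of the flux. The usual argument uses $-\frac{d}{dt}\int_{\{u>t\}}|\nabla u|=\int_{\{u>t\}}f$ together with the isoperimetric inequality, and this runs only down to $t=0$. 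So the comparison function is the radial solution on $B_a$ built from the distribution function $\mu(t)=|\{u>t\}|$ and the partial integrals of $f^\ast$, with possibly nonzero Neumann flux at $\partial B_a$. The same applies to $u^-$ on $\Omega_-$ with source $-f$. For bang-bang data these integrals are explicit. The rearranged data on $\Omega_+$ are at most $A_0$ on a set of measure at most $\min(a,|E|)$, and $-B_0$ on the rest. This yields explicit one-variable radial profiles $\Phi(a,\theta)$, where $\theta=|E\cap\Omega_+|$ is the amount of positive source landing in $\Omega_+$. The mean-zero condition then fixes how much of $E$ is left for $\Omega_-$.

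Next we reduce the problem to a finite-dimensional inequality. We would show that
\[
\norm{u}_{L^p}^p\le F(a,\theta):=\int_{B_a}\Phi_+^p+\int_{B_b}\Phi_-^p ,
\]
where $\Phi_\pm$ are the explicit radial comparison functions. The main claim is that $F$ is maximized at the endpoint configurations $a=|\Omega|$ (giving $v$) or $b=|\Omega|$ (giving $w$), and that the value $a=|\Omega|$ wins exactly when $|E|\le|\Omega\setminus E|$. The last point is the equality $\max=\norm v$ or $\norm w$ in the statement. To verify it we would compute $v$ and $w$ explicitly. Both are radial piecewise solutions: quadratic in $r$ plus a fundamental-solution term, with zero Neumann data on $\partial B$ by Lemma~\ref{lem:comparison}. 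We would compare $\int v^p$ and $\int w^p$ by scaling in $|E|/|\Omega|$. Swapping $(A_0,E)\leftrightarrow(B_0,\Omega\setminus E)$ should show that the solution whose positive plateau is concentrated on the smaller set is larger.

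The main obstacle is showing that the endpoint maximizes $F$, and this is where $p\ge(3n-1)/4$ enters. Our first attempt would use superadditivity. After the scaling $r\mapsto(a/|\Omega|)^{1/n}r$, the profile on $B_a$ is dominated by a rescaling of the full-ball profile: heights scale like $a^{2/n}$ times a factor controlled by the flux, and volumes scale like $a$. So $F$ is bounded by $\norm{v}^p\,g(s)+\norm{w}^p\,g(1-s)$ with $g(s)\approx s^{1+2p/n}$ times a correction. The correction comes from the nonzero boundary flux of the partial problems and lowers the exponent. We expect the exponent condition for convexity of the resulting function of $s$ to produce exactly $p\ge(3n-1)/4$. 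Convexity would then place the maximum at $s\in\{0,1\}$. We anticipate that the messy part is controlling the flux term uniformly in $\theta$. We would reduce it to monotonicity in $\theta$ by a mass-transport argument, moving positive source toward the centre.

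For the equality case, equality forces equality in the isoperimetric inequality for almost every level set, hence balls. It also forces $s\in\{0,1\}$, strictly unless $|E|=|\Omega\setminus E|$, where the two endpoints tie. Hence $u$ has one sign, $\Omega$ is a ball up to capacity zero, and $u=v$ or $u=-w$. The argument is as in the equality part of Theorem~\ref{thm:linfty}.
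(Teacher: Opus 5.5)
\paragraph{The gap: the endpoint inequality.} Your reduction in the first two paragraphs is sound. The step you flag as the main obstacle, however, is left open, and the mechanism you propose for it cannot work.

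Carry out your monotonicity in $\theta$. This is just the Hardy--Littlewood bound $G\le F$ for the integrated rearrangement of $f|_{\Omega_+}$, and likewise for $-f$ on $\Omega_-$; the paper builds it into Lemma~\ref{lem:level}, so no $\theta$-parameter ever appears. Your bound then becomes $\norm{u}_{L^p}^p\le\omega_nJ_{n,p}(a)$. Here $J_{n,p}(a)=\int_0^1V_a^p\,dt+\int_0^1W_{1-a}^p\,dt$ is built from the tent $F_\theta(t)=\min\{t/\theta,(1-t)/(1-\theta)\}$ with $\theta=|E|/|\Omega|$. Moreover, $\omega_nJ_{n,p}(a)$ is exactly $\norm{u}_{L^p}^p$ for two disjoint balls carrying suitable pieces of the same source. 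So the endpoint inequality $J_{n,p}(a)\le\max\{J_{n,p}(0),J_{n,p}(1)\}$ is the whole content of the theorem.

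Your route to it fails in two ways.
\begin{itemize}
\item \emph{Pointwise domination fails.} For $a<1$ the flux $F(a)$ is nonzero, so $V_a(at)$ vanishes linearly as $t\to1$. But $V_1(t)$ vanishes quadratically there, since $F(1)=0$. Hence no multiple of the rescaled full-ball profile dominates the truncated one.
\item \emph{Convexity in $s$ fails.} Suppose $J_{n,p}(a)\le g(a)J_{n,p}(1)+g(1-a)J_{n,p}(0)$ with $g$ convex, $g(0)=0$, $g(1)=1$, as your $g(s)\approx s^{1+2p/n}$ indicates. This forces $g(s)\le s$, hence the chord bound $J_{n,p}(a)\le aJ_{n,p}(1)+(1-a)J_{n,p}(0)$. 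That bound is false inside the theorem's range. Take $n=5$, $p=4$ and $a=\theta\to0$. Then $V_\theta(\theta t)=\theta^{-3/5}(1-t^{2/5})/10$, so $\int_0^\theta V_\theta^4\,dt\asymp\theta^{-7/5}\asymp J_{5,4}(1)$. Meanwhile $J_{5,4}(0)=O(1)$, so the chord value is $O(\theta^{-2/5})$.
\end{itemize}
Thus $J_{n,p}$ sits far above its chord, and $a=1$ wins only because $J_{n,p}(1)\gg J_{n,p}(0)$. No convexity argument in $s$ can give the theorem, and $(3n-1)/4$ does not arise as a convexity exponent.

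\paragraph{What the argument needs, and your other gaps.} The required input is a sign analysis of $J_{n,p}'(a)=pF(a)D_{n,p}(a)/n^{2p}$, where $D_{n,p}$ is the moment difference. The paper reduces by reflection to $\theta\le a\le1$. There $W_{1-a}$ is the explicit torsion profile \eqref{eq:bangbangtorsion}, and $V_a$ is convex and decreasing in $t^{2/n}$ because $F(t)/t$ decreases.
\begin{itemize}
\item For $\theta\le\frac12$, it proves $D_{n,p}>0$ on $(\theta,1)$. It compares the normalized profile $X_{a,\theta}$ with $X_{a,1/2}$ and with the latter's affine minorant in $x^{2/n}$, using $\eta\ge(b/a)^{2/(5n)}$.
\item For $\theta\ge\frac12$, it shows every interior critical point is a strict local minimum, via the bound \eqref{eq:bangbangmoment} on $M_{p-1}(P_a)/M_{p-2}(P_a)$.
\end{itemize}
Each of these estimates uses exactly the hypothesis $p\ge(3n-1)/4$.

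Two of your other steps also lack real arguments.
\begin{itemize}
\item \emph{Which endpoint wins.} Swapping $(A_0,E)\leftrightarrow(B_0,\Omega\setminus E)$ merely interchanges $v$ and $w$, so it cannot decide which norm is larger. For general mean-zero sources the winner can even change with $p$. The paper uses the single crossing of $Z=V_1-W_1$, together with $Z(0)>0$ and $\int_0^1Z\,dt>0$. It then applies $V_1^p-W_1^p\ge pc^{p-1}Z$, where $c$ is the common value at the crossing.
\item \emph{Equality.} Forcing a nodal fraction to be $0$ or $1$ requires the \emph{strict} interior inequality, which you have not proved. Equality in the isoperimetric inequality on almost every level set yields balls, but not concentric ones. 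The paper instead saturates P\'olya--Szeg\H{o} and invokes Brothers--Ziemer rigidity (Lemma~\ref{lem:nodalrigidity}). The case $p=\infty$ is handled by Theorem~\ref{thm:linfty}.
\end{itemize}
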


The equality identities also determine the geometry of the source: applying $-\Delta$ gives $f=f^*$ when $u=v$ and
$-f=(-f)^*$ when $u=-w$.
Hence $E$ is a centered ball
if $u=v$, while $\Omega\setminus E$ is a centered ball
if $u=-w$, up to sets of measure zero and after the translation.

Does paired comparison extend to gradient norms? In one dimension it
does for $1\le p\le3$, even for sources of arbitrary mean. The sign of the
mean selects the winning comparator. 

\begin{theorem}[One-dimensional gradient $L^p$ comparison]
\label{thm:gradientone}
For $n=1$, suppose $\Omega\subset\R$ is a nonempty bounded open
set and $B=(-R,R)$ is the interval of the same measure. 
Assume $f\in L^2(\Omega)$
and let $u\in H^1_0(\Omega)$ and $v,w\in H^1_0(B)$
be the weak Dirichlet solutions of
\[
 -u''=f\quad\text{in }\Omega,\qquad
 -v''=f^\ast,\quad -w''=(-f)^\ast\quad\text{in }B.
\]
If $1\le p\le3$ then
\[
 \norm{u'}_{L^p(\Omega)}
 \le
 \begin{cases}
 \norm{v'}_{L^p(B)},&\displaystyle\int_\Omega f\dd\ge0,\\[2mm]
 \norm{w'}_{L^p(B)},&\displaystyle\int_\Omega f\dd\le0.
 \end{cases}
\]
When $\int_\Omega f\dd=0$, the two comparison
norms are equal.
\end{theorem}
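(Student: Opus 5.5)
The plan is to turn the statement into a one-dimensional inequality about a function with prescribed derivative and zero mean on each component, and then compare that function with an explicit profile built from $f^\ast$. Write $\Omega=\bigcup_j I_j$ as a disjoint union of open intervals and set $g=u'$. On each $I_j$ we have $g'=-f$, and $\int_{I_j}g\,dx=0$ because $u$ vanishes at both endpoints. So $g=c_j-F$ on $I_j$, where $F$ is an antiderivative of $f$ and the constant $c_j$ is fixed by the mean-zero condition. On the ball side, $f^\ast$ is even, so $v$ is even and $v'(x)=-\int_0^x f^\ast$. Let $h(t)=\int_0^t f^\#$, where $f^\#$ is the decreasing rearrangement of $f$ on $(0,2R)$. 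Then $|v'(\pm x)|=\tfrac12|h(2x)|$, and
\[
 \int_B|v'|^p\,dx=\int_0^{2R}\Bigl|\tfrac{h(t)}{2}\Bigr|^p dt .
\]
The analogous formula for $w$ uses $\tilde h(t)=\int_0^t(-f)^\#$. Since $(-f)^\#(t)=-f^\#(2R-t)$, we get $\tilde h(t)=h(2R-t)-h(2R)$. When $\int f=h(2R)=0$ this says $|\tilde h(t)|=|h(2R-t)|$, which gives the equality of the two comparison norms claimed at the end of the theorem. In general, the sign of $h(2R)=\int_\Omega f$ decides which of $|h|$, $|\tilde h|$ is pointwise larger after reflection. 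This is where the case split in the statement comes from, and by symmetry ($f\mapsto -f$) it suffices to treat $\int f\ge0$ and compare with $v$.

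Next I would reduce to extreme configurations. For fixed $\Omega$, the map $f\mapsto g$ is linear, so $f\mapsto\int_\Omega|g|^p$ is convex. A convex functional on the class of functions equimeasurable with $f$ is bounded by its supremum over the extreme points, and after approximation we may take $f$ to be a step function. The problem then becomes finite-dimensional: finitely many intervals, a piecewise-linear $g$ with prescribed slope pieces $-f_k$, and a zero-mean constraint on each component.

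I would then prove two monotonicity steps. First, merging components does not decrease $\int|g|^p$. Concretely, for two adjacent intervals with their own mean-zero constants $c_1,c_2$, I would show that concatenating them and using a single common constant does not decrease $\int|g|^p$, at least after rearranging the pieces so that the slopes are monotone. Second, on a single interval $(0,L)$, rearranging the slope pieces so that $f$ is decreasing maximizes $\int_0^L|c-F|^p$ subject to $\int(c-F)=0$. This second step needs a two-piece exchange argument: swapping two adjacent pieces of $f$ that are out of order should increase $\int|g|^p$. After that, I would compare the single-interval monotone profile $c-h(t)$ with $\tfrac12|h(t)|$ on $(0,2R)$. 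The interval solution $u$ has $|u'|$ tracing $c-h$ once over length $L$, whereas the symmetric solution $v$ traces half-heights twice. The comparison then reduces to an inequality of the form
\[
 \int_0^L|c-h|^p\le\int_0^L\Bigl|\tfrac h2\Bigr|^p\cdot 2 ,
\]
or its correct variant, using $h(L)\ge0$.

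The main obstacle will be the exchange and merging inequalities. This is where the restriction $p\le3$ must enter. I expect that for $p>3$ a two-step bang-bang source already gives a counterexample, and that for $p\le3$ the relevant one-variable function, something like $\int|a+bs|^p\,ds$ viewed as a function of a shift parameter, has the right convexity or concavity. I would test the inequality first on bang-bang sources $f=A\mathbf 1_E-B\mathbf 1_{\Omega\setminus E}$, where everything is explicit polynomial integration, to locate exactly how $p=3$ arises. I would then try to write the general exchange step as a superposition of such two-level cases, via a layer-cake decomposition of $f$.
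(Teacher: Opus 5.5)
Your chain ``arbitrary configuration $\le$ merged $\le$ monotone $\le v$'' breaks at the second step. On a single interval, sorting the pieces of $f$ into decreasing order does not maximize $\int|u'|^p$, and the adjacent-swap inequality you plan to prove is false.

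Take $\Omega=(0,2)$ and two arrangements of one distribution, $\mathbf 1_{(0,1)}$ and $\mathbf 1_{(1/2,3/2)}$. Swapping the out-of-order adjacent pieces $(0,\tfrac12)$ (value $0$) and $(\tfrac12,\tfrac32)$ (value $1$) turns the second into the first.
\begin{itemize}
\item For the monotone arrangement, $u'=\tfrac34-x$ on $(0,1)$ and $u'=-\tfrac14$ on $(1,2)$.
\item The symmetric arrangement is a translate of $f^\ast$ on $B=(-1,1)$. There $v'=-x$ for $|x|<\tfrac12$ and $|v'|=\tfrac12$ for $\tfrac12<|x|<1$.
\end{itemize}
For $p=1,2,3$ one gets $\int|u'|^p=\tfrac9{16},\tfrac5{24},\tfrac{49}{512}$, against $\int_B|v'|^p=\tfrac34,\tfrac13,\tfrac5{32}$. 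So this swap strictly lowers the functional. The extremal configuration on one interval is the symmetric one, which is $v$ itself, and sorting moves away from it.

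Even your final step ``monotone $\le v$'' is a delicate $p$-dependent fact. In this example $\|u'\|_\infty=\tfrac34>\tfrac12=\|v'\|_\infty$, so that comparison reverses for large $p$.

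The merging step is also only asserted. For $p\neq2$, the constant that makes $g_j$ mean zero does not minimize $d\mapsto\int_{I_j}|g_j+d|^p$, so shifting components by constants has no evident sign.

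The ``extreme points'' remark buys nothing either. The equimeasurable class is not convex, and the extreme points of its closed convex hull are just the rearrangements of $f$. The step-function reduction is simply density.

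The missing idea is to compare $u'$ with $v'$ level by level rather than arrangement by arrangement. Normalize $|\Omega|=1$ and let $F$ be the integrated rearrangement on $[0,1]$. For a regular level $\tau>0$, let $\delta_1,\dots,\delta_N$ ($N\ge2$) be the values of $|u'|$ at the boundary points of $\{u>\tau\}$, and set $S_r=\sum_j\delta_j^r$. Then:
\begin{itemize}
\item the coarea formula gives $\int_{\{u>0\}}|u'|^p\,dx=\int_0^\infty S_{p-1}(\tau)\,d\tau$;
\item $S_1=\int_{\{u>\tau\}}f\le F(\mu(\tau))$ by Hardy--Littlewood;
\item $S_{-1}=-\mu'(\tau)$.
\end{itemize}

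The fulcrum is the elementary inequality $2^pS_{p-1}\le S_1^pS_{-1}$ for $1\le p\le3$. Since $p\mapsto\log S_{p-1}$ is convex and the right side is log-affine in $p$, it suffices to check the endpoints:
\begin{itemize}
\item at $p=1$, Cauchy--Schwarz gives $S_1S_{-1}\ge N^2\ge2N$;
\item at $p=3$ with $N\ge3$, $S_1^3S_{-1}\ge N^2S_1^2\ge9S_1^2\ge8S_2$;
\item at $p=3$ with $N=2$, use the identity $S_1^3S_{-1}-8S_2=(\delta_1-\delta_2)^4/(\delta_1\delta_2)$.
\end{itemize}
The inequality is an equality when each level has exactly two equal slopes, which is precisely the situation of $v$. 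It fails for every $p>3$ (take two slopes $e^{\pm x}$ with $x$ small), and that is where the restriction on $p$ enters.

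Substituting $t=\mu(\tau)$ yields $\int_{\{u>0\}}|u'|^p\le2^{-p}\int_0^\alpha F^p$. The same argument for $-f$ gives $\int_{\{u<0\}}|u'|^p\le2^{-p}\int_{1-\beta}^1(F-m)_+^p$. Since $\alpha+\beta\le1$ and $(F-m)_+\le F$ when $m\ge0$, the total is at most $2^{-p}\int_0^1F^p=\int_B|v'|^p$.
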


The theorem is proved in Section~\ref{sec:gradientoneproof}. An equality statement can be developed, but we omit it  in the interests of brevity. 

The corresponding gradient norm inequality can fail in dimensions $n\ge 2$ for $p=2$, even after taking the maximum over $v$ and $w$ on the right side and even if $u$ is required to satisfy the pointwise clamped boundary condition $\partial_\nu u=0$, which is stronger than assuming the source $f$ has mean zero. See the final part of
Section~\ref{sec:gradientoneproof}.

\subsection*{Future direction}
Do the Euclidean results in this paper extend to the sphere and hyperbolic space, and to Gauss space? We hope such possibilities will be explored by other researchers.

\subsection*{Outline of the paper}

Section~\ref{sec:notation} fixes notation for the rearrangements and radial
potentials employed throughout the paper. 
Section~\ref{sec:domainapproximation} establishes the nodal comparison
and its equality rigidity in every dimension. The special case
$(n,p)=(2,2)$ of Theorem~\ref{thm:planarlp} displays the essential
geometry with very little algebra and so we prove it first, in Section~\ref{sec:l2proof}.
Section~\ref{sec:lp}
proves $L^p$ comparison in one and two dimensions
and constructs counterexamples with nonzero source mean.
Section~\ref{sec:dimensionfailure} identifies where the proof breaks down in higher dimensions.
Theorem~\ref{thm:linfty} for $p=\infty$ is proved in
Section~\ref{sec:linfty}, including the arbitrary mean
extension in one dimension. The bang-bang comparison
of Theorem~\ref{thm:bangbang} is proved in
Section~\ref{sec:bangbangproof}. The subsequent two sections construct explicit counterexamples for $p<\infty$ in higher dimensions, and the paper concludes in Section~\ref{sec:gradientoneproof} with the proof of the one-dimensional gradient comparison.

\subsection*{Two-ball literature, and how our work differs}

Our technique is closely related to the \emph{two-ball method}, which has a specific history in 
clamped plate eigenvalue literature. Talenti \cite{Talenti1981}
introduced the strategy of applying his second order elliptic
rearrangement comparison theorem \cite{Talenti1976} separately on the
positive and negative nodal domains of the fourth order bilaplacian eigenfunction for a vibrating plate.
One may use here the signed source form of Talenti's theorem, recorded conveniently in the book of Kesavan \cite{Kesavan2006}. This step transforms the
problem on the original domain into radial comparison problems on two
smaller balls whose volumes add up to that of the original plate. Functions on the 
two balls are linked through a coupling condition inherited by the
boundary fluxes.

Nadirashvili \cite{Nadirashvili1995} sharpened the method to prove
Rayleigh's conjecture in the plane, saying that among all clamped
plates of given area, the first eigenvalue or fundamental tone is minimal for the disk.
Ashbaugh and Benguria \cite{AshbaughBenguria1995} developed the method
further and proved the conjecture in dimensions two and three.
Ashbaugh and Laugesen \cite{AshbaughLaugesen1996} carried the two-ball
analysis into higher dimensions, showing that although it cannot
prove the sharp Rayleigh conjecture, it does yield quantitative lower
bounds whose constants improve toward the sharp value $1$ as the
dimension tends to infinity. Krist\'aly \cite{Kristaly2020,Kristaly2022} extended the two-ball
Rayleigh conjecture method to plates in hyperbolic space and the
sphere, utilizing global signed source
rearrangements. This feature was present already in Talenti's original
plate argument \cite[pp.~268--270]{Talenti1981}; see also Kesavan's
account \cite[Section~4.5]{Kesavan2006} of Rayleigh's plate conjecture. 

After rearranging to the coupled two-ball situation, the method involves minimizing the coupled Rayleigh quotient over all source functions, and then finally optimizing this minimizer with respect to the radii of two balls, wanting the optimum to occur when one ball contains all the volume. 

The paired rearrangement argument in the current paper shares Talenti's nodal reduction
and use of global signed source rearrangements. The subsequent
step is where the methods diverge and our method is simpler: we do not minimize with respect to the source and instead keep it fixed. 

Lastly, as in the two-ball method, we optimize over all possible radii of the balls and aim to show the optimal situation is when one ball has all the volume. But here too a difference arises: because we have not minimized with respect to the source, the two balls are not interchangeable and so our functional is not symmetric. Nonetheless we are able to show in Theorem~\ref{thm:planarlp} for one and two dimensions that the optimum occurs when one ball degenerates and the other
captures all the volume. These endpoint possibilities correspond to rearrangement
of $f$ or $-f$ on the full ball.

To maintain unambiguous terminology in this paper, we reserve the phrase
\emph{two-ball method} for the nodal domain reduction of Talenti
\emph{et al.} followed by source minimization. Our fixed source approach on two balls is called the 
\emph{paired comparison method}.

The only other result we know that gives this particular kind of
single-ball norm comparison for sign-changing Dirichlet solutions is
due to Leylekian \cite[Section~7.1, Corollary~7.4]{Leylekian2024}
(reproduced in \cite[Section~5.5, Theorem~5.2]{ABLL2025}). His result
says that in two dimensions for $u\in H_0^2(\Omega)$, one has
\[
\|u\|_{L^1(\Omega)} \leq \|v\|_{L^1(B)}.
\]
Thus Leylekian obtains the same inequality as
\eqref{eq:L1strong} for $(n,p)=(2,1)$, under the additional clamped hypothesis
$u\in H_0^2(\Omega)$.

\subsection*{Other related literature}

For a broad account of symmetrization techniques, their historical
development, and applications in analysis and partial
differential equations, we refer to Talenti's survey
\emph{The art of rearranging} \cite{Talenti2016}. Also, his earlier paper
\cite{Talenti1987} develops an interesting constructive viewpoint on assembling a
rearrangement.

Optimization over classes of equimeasurable functions was investigated by
Alvino, Trombetti and Lions~\cite{AlvinoLionsTrombetti1989}. Their
viewpoint is close in spirit to the present work, in that the distribution of
the source is fixed while its spatial placement varies. Alvino,
Trombetti and Lions~\cite{LionsAlvinoTrombetti1990} established
broader elliptic and parabolic comparison results through Schwarz
symmetrization, extending the classical framework surrounding
Talenti's theorem. A quite different approach to the subject grew out of Baernstein's
``star function'' technique in complex analysis
\cite{BaernsteinDrasinLaugesen2019}.

Several works in the literature are more directly connected with sign changes. Van den
Berg and Bucur~\cite{VandenBergBucur2020} study Poisson
equations with two-valued sign-changing sources, including the
placement of the two phases and symmetry breaking on balls. Their
setting is especially relevant to the bang-bang sources in
Theorem~\ref{thm:bangbang}. McGillivray~\cite{McGillivray2017} treats a
related two phase membrane problem by rearrangement and isoperimetric
methods. In a different equation and with Neumann boundary conditions,
Salda\~na and Tavares~\cite{SaldanaTavares2018} introduce a
transformation that combines a sign flip with decreasing rearrangement
while preserving the relevant norm, specifically to handle nodal
solutions.

\section{Notation and definitions}
\label{sec:notation}

This section gathers together the signed rearrangements, integrated rearrangements and radial potentials
that are needed for the later proofs. 
We first define signed rearrangements and their integrals for sources of arbitrary mean. The radial potentials and endpoint arguments in most of the paper will assume mean zero.

Fix $n\ge1$. For $R>0$, let $B_R$ denote the open ball of radius
$R$ centered at the origin in $\R^n$, and write $\omega_n=|B_1|$.
Consider a nonempty bounded open set $\Omega$ in $\R^n$. For $f\in L^2(\Omega)$, write
\[
 m=\frac1{|\Omega|}\int_\Omega f\dd
\]
for its mean value. Unless stated otherwise (such as in the one-dimensional arguments in Sections~\ref{sec:linftyone} and~\ref{sec:gradientoneproof}), we normalize $\Omega$ to have the same volume as the unit ball,
\[
 |\Omega|=\omega_n
\]
and write $B=B_1$. Under this normalization, define the nodal volume
fractions
\begin{equation}\label{eq:nodalfractions}
 \alpha=\frac{|\{u>0\}|}{\omega_n},\qquad
 \beta=\frac{|\{u<0\}|}{\omega_n},
\end{equation}
which satisfy $\alpha+\beta\le1$.

\subsection*{Signed rearrangements}

Let $f^\downarrow$ denote the decreasing one-variable rearrangement of $f\in L^2(\Omega)$ on $(0,1)$, with respect to normalized volume $dx/|\Omega|$. When $|\Omega|=\omega_n$, this convention gives 
\begin{equation*}
 f^\ast(x)=f^\downarrow(|x|^n) .
\end{equation*}
The decreasing profile $f^\downarrow$ belongs to $L^2(0,1)$ and under the normalization $|\Omega|=\omega_n$ satisfies
\[
 \omega_n\int_0^1f^\downarrow(t)^2\,dt=\int_\Omega f^2\dd , \qquad 
  \omega_n\int_0^1f^\downarrow(t)\,dt = \int_\Omega f \dd = \omega_n m .
\]
Define the \emph{integrated rearrangement} of $f$ by 
\[
 F(t)=\int_0^t f^\downarrow(s)\,ds,
 \qquad 0\le t\le1.
\]
The decreasing profile makes $F\in C([0,1])$ concave, and hence $F$ lies above the chord joining its endpoint values:
\[
 F(0)=0,\qquad F(1)=m,\qquad F(t)\ge mt,\qquad 0\le t\le1.
\]
The corresponding profile for $(-f)^\ast$ equals $-f^\downarrow(1-t)$, except possibly at a countable number of points where $f^\downarrow$ jumps, and the integrated rearrangement of $-f$ is
\[
 \widetilde F(t)= - \int_0^t f^\downarrow(1-s)\,ds
 =F(1-t)-F(1)=F(1-t)-m.
\]

We shall several times use the Hardy--Littlewood rearrangement inequality in the form
\begin{equation}\label{eq:top}
 \int_E f\dd\le \omega_n F(|E|/\omega_n)
\end{equation}
for measurable $E\subset\Omega$. 

\subsection*{Radial potentials}

Throughout most of the paper we assume the source $f$ has mean zero, so that $m=0$. Then 
\[
 F(0)=F(1)=0,\qquad F(t)\ge0,\qquad
 \widetilde F(t)=F(1-t),\qquad 0\le t\le1.
\]
If $f\not\equiv0$ then concavity and the zero endpoint values imply $F(t)>0$ for $0<t<1$.

For $0\le a,b\le1$, define nonnegative decreasing functions by 
\begin{align}
 V_a(t)&=
 \begin{cases}
n^{-2} \displaystyle{\int_t^a
 F(s)s^{2/n-2}\,ds},&0<t \leq a,\\[4pt]
 0,&a\le t\le1,
 \end{cases}\label{eq:Vanhigher}\\
 W_b(t)&=
 \begin{cases}
 n^{-2} \displaystyle{\int_t^b
 \widetilde F(s)s^{2/n-2}\,ds},&0<t \leq b,\\[4pt]
 0,&b\le t\le1.
 \end{cases}\label{eq:Wbnhigher}
\end{align}
The value at $t=0$ is understood as a limit and may be infinite. Note these functions are supported on $[0,a]$ and $[0,b]$, respectively. 

The functions $v$ and $w$ used in the paired comparison theorems of this paper arise when $a=1$ and $b=1$, that is, the full-radius case. Specifically, define nonnegative radially symmetric decreasing functions
\begin{equation} \label{eq:vwdef}
 v(x)=V_1(|x|^n),\qquad
 w(x)=W_1(|x|^n),\qquad x\in B.
\end{equation}
\begin{lemma}[Comparison potentials on the ball]\label{lem:comparison}
If $f$ has mean zero, $m=0$, then the functions $v$ and $w$ in \eqref{eq:vwdef} belong to $H^1_0(B)$ and are weak solutions of  
\[
 -\Delta v=f^\ast\quad\text{in }B ,\qquad
 -\Delta w=(-f)^\ast\quad\text{in }B .
\]
They are nonnegative, radially decreasing, $C^1$-smooth away from the origin, and on $\partial B$ they vanish and have normal derivative zero. If $f\not\equiv0$ then $v$ and $w$ are positive in $B$ and their radial
derivatives are negative for $0<r<1$.
\end{lemma}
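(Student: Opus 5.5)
The approach is a direct radial computation in the variable $t=|x|^n=r^n$, followed by a check that the resulting radial function is a genuine weak solution, including at the origin. The argument for $w$ is identical to that for $v$, using $\widetilde F(t)=F(1-t)$, which is the integrated profile of $(-f)^\ast$. So it suffices to treat $v$.

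\textbf{Step 1: derivative and flux identity.} Write $v(x)=V_1(r^n)$. Since $F$ is continuous, $V_1'(t)=-n^{-2}F(t)t^{2/n-2}$ for $0<t<1$, and therefore
\[
v_r(r)=nr^{n-1}V_1'(r^n)=-\tfrac1n F(r^n)\,r^{1-n}.
\]
This gives the flux identity
\[
n\omega_n r^{n-1}v_r=-\omega_n F(r^n)=-\int_{B_r}f^\ast\dd,
\]
because $\omega_n F(r^n)=\int_{B_r}f^\ast\dd$ by the definition $f^\ast(x)=f^\downarrow(|x|^n)$. This is exactly the radial form of $-\Delta v=f^\ast$, namely $-(r^{n-1}v_r)'=r^{n-1}f^\ast$ in the a.e./absolutely continuous sense, since $F$ is absolutely continuous with $F'=f^\downarrow$. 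Continuity of $F$ makes $v$ $C^1$ away from the origin.

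At $r=1$ we have $v=V_1(1)=0$ and $v_r(1)=-F(1)/n=-m/n=0$. This is where $m=0$ is used. Since $F\ge0$, $v$ is nonnegative and radially decreasing. If $f\not\equiv0$, then $F>0$ on $(0,1)$, so $v_r<0$ there and $v>0$ in $B$.

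\textbf{Step 2: $v\in H^1_0(B)$.} This is the main technical point, because $v$ may be unbounded at the origin. I would show $\nabla v\in L^2$ by estimating $F$ via Cauchy--Schwarz:
\[
F(t)\le t^{1/2}\norm{f^\downarrow}_{L^2(0,1)}.
\]
Then
\[
\int_B|\nabla v|^2\dd=n\omega_n\int_0^1 v_r^2r^{n-1}\,dr=\frac{\omega_n}{n}\int_0^1F(r^n)^2r^{1-n}\,dr\lesssim\int_0^1 r^{n}r^{1-n}\,dr<\infty.
\]
Similarly, $v$ is integrable near $0$, since $V_1(t)\lesssim\int_t^1 s^{1/2+2/n-2}\,ds$ has at most a power singularity $t^{2/n-1/2}$, whose $L^2$ norm in $dx$ is finite.

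The function $v$ is locally Lipschitz on $\overline B\setminus\{0\}$ and vanishes on $\partial B$. Multiplying by a cutoff $\eta_\epsilon$ that vanishes near $0$, and using that a point has zero capacity when $n\ge2$, shows $v\in H^1_0(B)$. When $n=1$, $v$ is simply absolutely continuous on $[-1,1]$, because $F(t)t^{0}$ is bounded there, so no cutoff is needed.

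\textbf{Step 3: weak formulation.} For $\varphi\in C_c^\infty(B)$, integrate over the annulus $\epsilon<|x|<1$ in polar coordinates using the flux identity. The boundary term at $|x|=\epsilon$ is
\[
\omega_n F(\epsilon^n)\,\bar\varphi_\epsilon,
\]
where $\bar\varphi_\epsilon$ is the spherical mean of $\varphi$, and this tends to $0$ as $\epsilon\to0$. Letting $\epsilon\to0$ gives
\[
\int_B\nabla v\cdot\nabla\varphi\dd=\int_B f^\ast\varphi\dd.
\]
Density of $C_c^\infty(B)$ in $H^1_0(B)$ then extends this to all test functions. Uniqueness of the weak Dirichlet solution identifies this $v$ with the $v$ of \eqref{eq:foundations}. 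The same computation for $w$ completes the proof.
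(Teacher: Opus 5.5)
Your proposal is correct and follows essentially the same route as the paper's sketch: the formula $v_r=-F(r^n)/(nr^{n-1})$, the Cauchy--Schwarz bound $F(t)\le t^{1/2}\norm{f^\downarrow}_{L^2(0,1)}$ for the $L^2$ and energy estimates, a cutoff or truncation near the origin for membership in $H^1_0(B)$, and the radial equation $-(r^{n-1}v_r)'=r^{n-1}f^\downarrow(r^n)$, with $m=0$ giving $v_r(1)=0$. Your annulus computation, in which the inner flux term $\omega_nF(\epsilon^n)\bar\varphi_\epsilon$ tends to $0$, spells out the weak formulation that the paper calls straightforward. One small point: the cutoff step needs more than the zero capacity of a point, namely your growth bound on $v$ near $0$ (or alternatively a value truncation $\min\{v,k\}$), and that bound is exactly what makes $\norm{v\nabla\eta_\epsilon}_{L^2}\to0$.
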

We sketch the proof for completeness.
\begin{proof}
It suffices to consider $v$, since the argument for \(w\) is analogous.
The sign and monotonicity assertions follow directly from
\eqref{eq:Vanhigher} since $F\ge0$, with $F>0$ on $(0,1)$ when
$f\not\equiv0$.

The $C^1$-smoothness of $v$ away from the origin is clear because continuity of $F(s)$ for $0 \leq s \leq 1$ ensures continuity of $V_1^\prime(t)$ for $0<t \leq 1$. Further, $V_1(1)=0$ and so $v=0$ on $\partial B$, while $V_1^\prime(1)=0$ because $F(1)=0$ and so $v$ has normal derivative zero at the boundary $\partial B$. 

Next we verify \(v\in L^2(B)\). Cauchy--Schwarz gives $|F(s)| \le s^{1/2}\norm{f^\downarrow}_{L^2(0,1)}$ and so 
\[
 |v(r)|
 \le \norm{f^\downarrow}_{L^2(0,1)} n^{-2} 
      \int_{r^n}^1s^{2/n-3/2}\,ds
 \le C_n\norm{f^\downarrow}_{L^2(0,1)}
 \begin{cases}
  1, & n\le3,\\[2mm]
  \log(1/r), & n=4,\\[2mm]
  r^{2-n/2}, & n\ge5.
 \end{cases}
\]
Here $C_n$ depends only on $n$. Each bound is square integrable with respect to \(r^{n-1}\,dr\) near
the origin, and so \(v\in L^2(B)\).

To show $v \in H^1_0(B)$, first observe $ v_r=-F(r^n)/nr^{n-1}$, so that spherical coordinates and the substitution \(t=r^n\) yield
\[
 \int_B|\nabla v|^2\dd
 =\frac{\omega_n}{n^2}
   \int_0^1F(t)^2t^{2/n-2}\,dt < \infty ,
\]
by recalling that $F(t) \leq C t^{1/2}$. A truncation argument around the origin now shows that \(v\in H^1_0(B)\). 

Finally, $- r^{1-n} \bigl(r^{n-1}v_r(r)\bigr)' = F'(r^n)=f^\downarrow(r^n)$ for almost every \(r>0\), that is, $-\Delta v = f^*$ pointwise. It is now straightforward to show $-\Delta v = f^*$ weakly in the $H^1_0$-sense. 
\end{proof}

\subsection*{$J$-functional for $L^p$ norms}

When $1 \leq p < \infty$, we measure the $L^p$ norms of the potentials with the functional 
\begin{equation}\label{eq:Jnp}
 J_{n,p}[F](a)=
 \int_0^1 V_a(t)^p\,dt
 +\int_0^1 W_{1-a}(t)^p\,dt,
 \qquad 0\le a\le1.
\end{equation}
Here $V_a$ and $W_{1-a}$ are constructed from $F$. When $F$ is fixed, we abbreviate $J_{n,p}[F](a)$ to $J_{n,p}(a)$ and further to $J_p(a)$ when the dimension $n$ is fixed.

\subsection*{Rescaled profiles and moments}

For $0<a<1$, the rescaled profiles on $(0,1)$ are
\begin{equation}\label{eq:rescaledprofiles}
 P_a(t)=n^2V_a(at),\qquad
 Q_a(t)=n^2W_{1-a}((1-a)t),\qquad 0<t<1.
\end{equation}
For a nonnegative function $Y$ on $(0,1)$, set
\begin{equation}\label{eq:profilemoments}
 M_0(Y)=1,\qquad
 M_r(Y)=\int_0^1Y(t)^r\,dt , \quad r \neq 0,
\end{equation}
with the integral required to be finite. For $p>1$, define the moment difference
\begin{equation}\label{eq:momentdifference}
 D_{n,p}(a)=a^{2/n-1}M_{p-1}(P_a)
           -(1-a)^{2/n-1}M_{p-1}(Q_a),\qquad 0<a<1,
\end{equation}
whenever both moments are finite. When the dimension $n$ is fixed, we abbreviate $D_{n,p}$ to $D_p$.

\section{Nodal comparison in all dimensions}
\label{sec:domainapproximation}

This section develops the common part of the proofs in every
dimension $n\ge1$: Talenti comparison on the two nodal sets, rigidity
when that comparison is saturated, and the reduction of $L^p$
norm comparisons to an endpoint inequality. The later sections then prove the main theorems by 
resolving the dimensions and parameters for which that
endpoint inequality holds.

We work with $u,v,w$ as assumed in Section~\ref{sec:results},
with mean-zero source $f$, and rescale without loss of generality to $|\Omega|=\omega_n$ so that $B=B_1$. We start by comparing the positive and negative parts separately.
Put
\[
 u_+=\max(u,0),\qquad u_-=\max(-u,0),
\]
and recall their nodal volume fractions $\alpha,\beta$ from
\eqref{eq:nodalfractions}. Let $U_+$ and $U_-$ denote the decreasing (one-dimensional) 
rearrangements of $u_+$ and $u_-$ on $(0,1)$. 
These functions vanish after $t=\alpha$ and $t=\beta$, respectively, and
for $1\le p<\infty$ we have
\begin{equation*}
 \int_\Omega |u|^p\dd
 =\omega_n\int_0^\alpha U_+(t)^p\,dt
  +\omega_n\int_0^\beta U_-(t)^p\,dt,
\end{equation*}
with the integrals understood in the extended sense.

To better visualize the construction, let $u^\downarrow$ be the signed
decreasing rearrangement of the solution $u$ with respect to normalized
volume, so that 
\[
 u^\downarrow(t)
 =\inf\Bigl\{\lambda\in\R: \frac{1}{|\Omega|} 
 |\{x\in\Omega:u(x)>\lambda\}|\le t\Bigr\} , \qquad 0<t<1 .
\]
Then 
\[
 U_+(t)=\max\{u^\downarrow(t),0\},\qquad
 U_-(t)=\max\{-u^\downarrow(1-t),0\} ,
\]
for almost every $t$. That is, the negative tail is reflected and its sign reversed, as shown
in Figure~\ref{fig:signedprofiles}. No symmetry of $u^\downarrow$ is assumed.

\begin{figure}[htbp]
 \centering
 \includegraphics[width=0.92\linewidth]{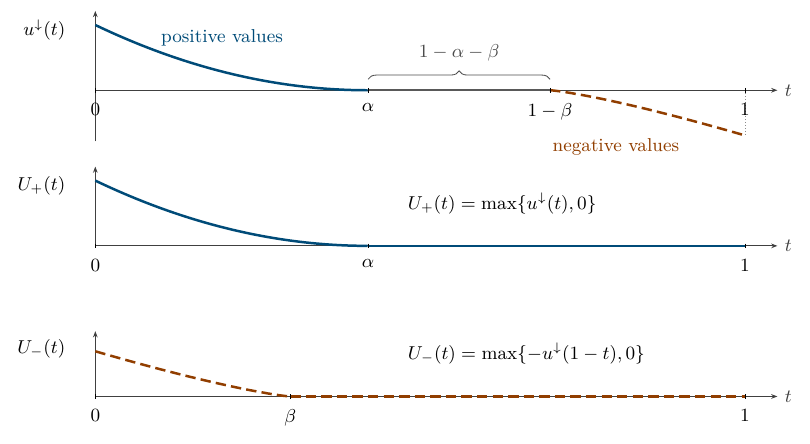}
 \caption{Separating the two signs of the solution (schematic).
 The signed decreasing rearrangement $u^\downarrow$ lists the values
 of $u$ from largest to smallest. Its positive part gives $U_+$;
 reflecting the negative tail and changing its sign gives $U_-$.
 All three horizontal
 axes show normalized volume $t$. The zero plateau of $u^\downarrow$ has length
 $1-\alpha-\beta$; the two nonnegative profiles have support lengths
 $\alpha$ and $\beta$.}
 \label{fig:signedprofiles}
\end{figure}

Functions $V_a$ and $W_b$ were defined in \eqref{eq:Vanhigher}--\eqref{eq:Wbnhigher}. Remember that $a,b$ are free parameters, whereas $\alpha,\beta$ are the nodal fractions determined by $u$ and hence are fixed.

\begin{lemma}[Talenti theorem on the two nodal sets]\label{lem:level}
For $n\ge1$, under the hypotheses above one has
\begin{align*}
 U_+(t)&\le V_\alpha(t), && 0<t<\alpha ,\\
 U_-(t)&\le W_\beta(t), && 0<t<\beta .
\end{align*}
\end{lemma}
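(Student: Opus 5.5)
We follow Talenti's level set argument on the open set $\{u>0\}$, keeping track of where the source can sit. The case of $U_-$ follows by applying the same argument to $-u$, which solves $-\Delta(-u)=-f$ and has integrated rearrangement $\widetilde F$. So it suffices to treat $U_+$. Write $\mu(\lambda)=|\{u>\lambda\}|$ for $\lambda>0$, so that $\mu(0^+)=\alpha\omega_n$. For almost every $\lambda>0$, test the equation with $(u-\lambda)_+\wedge h$ and let $h\downarrow0$. This truncation is an admissible $H^1_0(\Omega)$ test function even though $\Omega$ is arbitrary and $u$ changes sign, because $\lambda>0$. The result is
\[
 -\frac{d}{d\lambda}\int_{\{u>\lambda\}}|\nabla u|^2\dd=\int_{\{u>\lambda\}}f\dd ,
\]
that is, $\int_{\{u=\lambda\}}|\nabla u|\,d\mathcal H^{n-1}=\int_{\{u>\lambda\}}f\dd$ for a.e.\ $\lambda$ by the coarea formula.

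The key point where mean zero enters is the estimate of the right side by the Hardy--Littlewood inequality \eqref{eq:top}:
\[
 \int_{\{u>\lambda\}}f\dd\le\omega_nF(\mu(\lambda)/\omega_n).
\]
Since $F\ge0$, this bound is usable even though $f$ changes sign. Next we apply Cauchy--Schwarz on the level set, $P(\{u>\lambda\})^2\le\bigl(-\mu'(\lambda)\bigr)\int_{\{u=\lambda\}}|\nabla u|$, together with the isoperimetric inequality $P(\{u>\lambda\})\ge n\omega_n^{1/n}\mu(\lambda)^{1-1/n}$. Writing $t=\mu(\lambda)/\omega_n$, these combine to give
\[
 1\le \frac{-\mu'(\lambda)}{\omega_n}\,F(t)\,n^{-2}t^{2/n-2}.
\]
Here we use $\omega_n^{2/n}\omega_n^{2-2/n}=\omega_n^2$, which matches the normalization in \eqref{eq:Vanhigher}.

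We then integrate this differential inequality in $\lambda$ from $0$ to $U_+(t)$, using the standard Talenti bookkeeping. The distribution function $\mu$ may have jumps (flat parts of $u$), but the singular part of $\mu$ only helps the inequality, since $\mu$ is decreasing. The change of variables $s=\mu(\lambda)/\omega_n$ then yields
\[
 U_+(t)\le n^{-2}\int_t^{\alpha}F(s)s^{2/n-2}\,ds=V_\alpha(t),\qquad 0<t<\alpha .
\]
The lower limit of the $\lambda$-integration is $\lambda=0$, where $\mu(0^+)=\alpha\omega_n$; this is exactly why the upper limit of integration is $\alpha$ and not $1$.

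I expect the main obstacle to be the rigorous justification on a general bounded open set. Two points need care. First, the test functions must be legitimate, and the set $\{u>0\}$ must be handled properly even though its boundary may meet $\partial\Omega$ irregularly. Second, the integration step from the differential inequality requires the usual care when $\mu$ is not absolutely continuous. For this I would follow Kesavan's treatment of the signed-source Talenti theorem \cite{Kesavan2006}, with $\Omega$ replaced by the open set $\{u>0\}$. The function $u_+$ belongs to $H^1_0(\{u>0\})$ in the quasi-open sense, and only level sets with $\lambda>0$ are used. A further subtlety is that the source restricted to $\{u>0\}$ is $f|_{\{u>0\}}$. Its own integrated rearrangement is dominated by $F$ via \eqref{eq:top}, because $F$ is the supremum over all subsets of $\Omega$, so no loss occurs there.
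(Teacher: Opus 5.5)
Your proposal is correct and follows essentially the paper's route: the paper also reduces to $U_+$, bounds the integrated rearrangement of $f|_{\{u>0\}}$ by $F$ via \eqref{eq:top}, and runs Kesavan's positive-level Talenti argument. That argument tests the truncations of $(u-\tau)_+$ on all of $\Omega$, so openness of $\{u>0\}$ is not needed, and $\mu(0^+)=\omega_n\alpha$ gives the upper limit $\alpha$. One small correction to your commentary: \eqref{eq:top} holds for any $f$, and mean zero is instead what gives $F\ge0$ and $\widetilde F=F(1-\cdot)\ge0$, making $V_\alpha,W_\beta$ nonnegative and decreasing as your integration step requires.
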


\begin{proof}
It suffices to prove the inequality for $U_+$, since the argument
for $U_-$ is analogous. If $\alpha=0$, there is nothing to prove.
Suppose $\alpha>0$ and put
\[
 \Omega_+=\{u>0\},\qquad g=f|_{\Omega_+}.
\]
To clarify ideas, suppose $u$ has a continuous representative on $\Omega$.
Then $\Omega_+$ is open, and the standard Sobolev restriction
property gives $u|_{\Omega_+}\in H^1_0(\Omega_+)$, satisfying
$-\Delta u=g$ weakly there.

Let $g^\downarrow$ denote the decreasing rearrangement of $g$
on $(0,\alpha)$, using normalized volume, and define
\[
 G(s)=\int_0^s g^\downarrow(r)\,dr,
 \qquad 0\le s\le\alpha.
\]
The measurable set characterization of rearrangement and the
Hardy--Littlewood inequality \eqref{eq:top} give
\[
 G(s)=\frac1{\omega_n}
 \sup_{\substack{E\subset\Omega_+\\ |E|=\omega_n s}}
 \int_E g\,dx
 \le F(s),\qquad 0<s<\alpha.
\]
Talenti's comparison theorem for a nonnegative weak solution
with a signed source
\cite[Theorem~3.1.1, p.~48]{Kesavan2006}
(see also \cite{Talenti1976}) therefore yields
\[
 U_+(t)
 \le\frac1{n^2}\int_t^\alpha G(s)s^{2/n-2}\,ds
 \le\frac1{n^2}\int_t^\alpha F(s)s^{2/n-2}\,ds
 =V_\alpha(t),\qquad 0<t<\alpha.
\]

Without continuity of $u$, the same bound follows from the
positive level argument in Kesavan's proof
\cite[Theorem~3.1.1]{Kesavan2006}, using \eqref{eq:top} and the
distribution function $\mu(\tau)=|\{u>\tau\}|$, with
$\mu(0^+)=\omega_n\alpha$. The proof uses the truncations
$\min\{1,(u-\tau)_+/h\}\in H^1_0(\Omega)$ for $\tau,h>0$
as test functions in the weak equation on $\Omega$.
Thus no openness of the nodal sets is required.
\end{proof}

The next lemma is important for the equality statements in
Theorems~\ref{thm:planarlp}--\ref{thm:bangbang} but can be skipped
by readers who are focussed on the norm inequalities.

\begin{lemma}[Rigidity of a saturated nodal comparison]
\label{lem:nodalrigidity}
For \(n\ge1\), under the assumptions of Section \ref{sec:results},
if $f\not\equiv0$, $\alpha>0$ and $U_+=V_\alpha$ 
then the zero extension of $u_+$ is a translate of the zero extension
of $x\mapsto V_\alpha(|x|^n)$.
If $u$ is continuous in $\Omega$, then the nodal set
$\{u>0\}$ is the corresponding ball, possibly with a set of zero
$H^1$-capacity removed when $n\ge2$. 
If also $\alpha=1$, then $\Omega$ is a
ball (possibly with a set of zero $H^1$-capacity removed, when $n \geq 2$). The analogous assertions hold
for $u_-$ and $W_\beta$.
\end{lemma}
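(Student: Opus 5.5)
The plan is to retrace the chain of inequalities in the proof of Lemma~\ref{lem:level} and show that, when $U_+=V_\alpha$, every inequality in it is an equality. Each equality is then the equality case of a known rigidity result.

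\textbf{Step 1: equality in the rearranged source.} Since $F>0$ on $(0,1)$ and $G\le F$, the identity
\[
\int_t^\alpha G(s)s^{2/n-2}\,ds=\int_t^\alpha F(s)s^{2/n-2}\,ds \quad\text{for all } t
\]
forces $G=F$ on $(0,\alpha)$. Differentiating gives $g^\downarrow=f^\downarrow$ on $(0,\alpha)$. So the restriction of $f$ to $\{u>0\}$ consists of the top $\alpha$-portion of the values of $f$.

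\textbf{Step 2: equality in Talenti's argument.} Kesavan's proof runs through the distribution function $\mu(\tau)=|\{u>\tau\}|$ and the chain
\[
\int_{\{u>\tau\}}g \;=\; -\frac{d}{d\tau}\int_{\{u>\tau\}}|\nabla u|^2
\;\ge\; \Bigl(-\tfrac{d}{d\tau}\int_{\{u>\tau\}}|\nabla u|\Bigr)^2\Big/(-\mu'(\tau))
\;\ge\; \frac{P(\{u>\tau\})^2}{-\mu'(\tau)}.
\]
This uses coarea, Cauchy--Schwarz, and the isoperimetric inequality $P(\{u>\tau\})\ge n\omega_n^{1/n}\mu(\tau)^{1-1/n}$. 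A further inequality $\int_{\{u>\tau\}}g\le \omega_nG(\mu/\omega_n)$ comes from \eqref{eq:top}.

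If $U_+=V_\alpha$, integrating the resulting differential inequality must give equality. Hence for a.e.\ $\tau>0$ all of the following hold:
\begin{itemize}
\item $\mu$ is absolutely continuous, with no flat zones: $|\{\nabla u=0,\,u>0\}|=0$.
\item $|\nabla u|$ is constant on a.e.\ level surface $\{u=\tau\}$ (equality in Cauchy--Schwarz).
\item Each superlevel set $\{u>\tau\}$ is a ball up to null sets (equality in the isoperimetric inequality, via De Giorgi).
\end{itemize}

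I would then follow the standard rigidity scheme for Talenti equality, as in Alvino--Lions--Trombetti or Kesavan's equality discussion, to show the balls are concentric. The mechanism is that $|\nabla u|$ is constant on spheres and there are no plateaus, so the centers $x(\tau)$ cannot drift. One can see this by noting that the balls are nested and their radii are an absolutely continuous function of $\tau$. One shows $x(\tau)$ is constant by a Brunn--Minkowski/perimeter argument, since any drift would make $|\nabla u|$ nonconstant on the level sphere. Consequently $u_+$ coincides a.e.\ with a radial decreasing function about a point $x_0$. Its rearrangement is $V_\alpha$, so $u_+=V_\alpha(|x-x_0|^n)$ a.e. This gives the first claim.

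\textbf{Step 3: nodal set and the case $\alpha=1$.} If $u$ is continuous, $\{u>0\}$ is open and equals the ball $B_\rho(x_0)$ of volume $\omega_n\alpha$ up to a null set. Its boundary values vanish, since $u=0$ on $\partial\{u>0\}$ inside $\Omega$. To upgrade ``null set'' to ``zero $H^1$-capacity'', I use that $u_+\in H^1_0(\{u>0\})$ equals the positive function $V_\alpha(|x-x_0|^n)$ on the ball. Any closed set $K=B_\rho\setminus\{u>0\}$ lying inside the ball would then need positive-capacity-free vanishing, because $H^1_0$ functions vanish quasi-everywhere off an open set. Since $V_\alpha>0$ in the open ball, $K$ must have capacity zero. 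In dimension one capacity-zero sets are empty, so no removal is needed.

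If $\alpha=1$, then $|\Omega\setminus B_\rho(x_0)|=0$ and $B_\rho(x_0)\subset\Omega$ up to capacity zero. Being open and bounded, $\Omega$ also cannot contain points outside $\overline{B_\rho}$, since those would form an open set of positive measure. Finally, the part of $\partial B_\rho$ inside $\Omega$ would sit where $u$ vanishes inside $\Omega$, a set on which $u=0$ in the interior. Since $u>0$ a.e.\ in $\Omega$, this is a capacity-null obstruction handled as above. So $\Omega$ is the ball minus a capacity-zero set. The statements for $u_-$ and $W_\beta$ follow by applying everything to $-u$ and $-f$.

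\textbf{Main obstacle.} The main obstacle is Step~2: turning equality of integrated profiles into a.e.\ equality in each inequality along the level sets, and then proving that the concentric-ball structure holds. I would try first to cite the established equality case of Talenti's theorem, in the form ``$u^*=v$ implies $u$ is a translate of $v$'' from Alvino--Lions--Trombetti/Kesavan. I would check that it applies to signed $g$ and to nonopen $\{u>0\}$, the latter through the truncation test functions already used in Lemma~\ref{lem:level}.
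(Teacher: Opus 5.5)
The gap is in the step you flag yourself, concentricity of the level balls, and that step carries all of the rigidity. Your bullet list (no critical plateau, absolutely continuous $\mu$, $\{u>\tau\}$ a ball, $|\nabla u|$ constant on a.e.\ level sphere) is only the set-up of the Brothers--Ziemer theorem. Showing that the centers $x(\tau)$ do not move is its substance, and you do not supply it.

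Your heuristic that drift would make $|\nabla u|$ nonconstant on the level sphere is the right intuition. But it presupposes that $\tau\mapsto x(\tau)$ is differentiable, which you do not know. Nestedness plus absolutely continuous radii does not rule out drift: any family $B_{r(\tau)}(x(\tau))$ with $|x'(\tau)|\le -r'(\tau)$ is nested. A Brunn--Minkowski or perimeter argument says nothing about centers once every level set is already an exact ball. Citing the Talenti equality case does not close the gap either, because, as you note, it would first have to be re-established for signed $g$. That re-establishment is precisely what this lemma must supply.

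The missing idea is that saturation here is just equality in the P\'olya--Szeg\H{o} inequality for $h=u_+$. The Brothers--Ziemer nondegeneracy condition $|\nabla h^*|>0$ on $\{0<h^*<\operatorname*{ess\,sup}h\}$ holds because $F>0$ on $(0,1)$. The Ferone--Volpicelli form of Brothers--Ziemer then gives that $h$ is a translate of $h^*$.

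The paper reaches this equality with no level-set analysis, by testing the weak equation on $\Omega$ with $h$:
\[
 \int_B|\nabla h^*|^2\dd\le\int_\Omega|\nabla h|^2\dd=\int_\Omega fh\dd\le\int_Bf^*h^*\dd=\int_B|\nabla h^*|^2\dd .
\]
The last identity holds because $h^*(x)=V_\alpha(|x|^n)$ is the Dirichlet potential of $f^*$ on $B_{\alpha^{1/n}}$. This route handles signed $f$ and non-open nodal sets automatically, and it makes your Step~1 (correct) unnecessary.

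If you prefer to keep your level-set route, concentricity can be proved directly. With no critical plateau and $|\nabla u|$ constant on a.e.\ level sphere, the coarea formula gives
\[
 \mu(\tau)x(\tau)=\int_{\{u>\tau\}}x\dd=\int_\tau^\infty x(s)\bigl(-\mu'(s)\bigr)\,ds .
\]
Hence $x$ is locally absolutely continuous on $(0,\operatorname*{ess\,sup}u_+)$ with $\mu x'=0$ a.e., so $x$ is constant.

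Your Step~3 agrees with the paper's capacity argument. Its final sentence is superfluous: openness of $\Omega$ together with $|\Omega\setminus B_\rho(x_0)|=0$ already gives $\Omega\subset B_\rho(x_0)$.
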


\begin{proof}
Put $h=u_+$. By the hypothesis $U_+=V_\alpha$, we have $h^*(x)=U_+(|x|^n)=V_\alpha(|x|^n)$. Hence $h^*(x)=v(x)-V_1(\alpha)$ on the ball $B_{\alpha^{1/n}}$ and $h^*$ equals $0$ outside that ball. 

Testing the equation for $u$ by $h$, and using the 
P\'olya--Szeg\H{o} and Hardy--Littlewood inequalities, gives that 
\[
 \int_B|\nabla h^*|^2\dd
 \le\int_\Omega|\nabla h|^2\dd
 =\int_\Omega fh\dd
 \le\int_B f^*h^*\dd
 =\int_B|\nabla h^*|^2\dd;
\]
here the last identity follows by testing the radial Poisson equation on
$B_{\alpha^{1/n}}$ by its Dirichlet potential $h^*$. Thus equality
holds in the P\'olya--Szeg\H{o} inequality. Since $F(t)>0$ on $(0,1)$, we have 
$|\nabla h^*|>0$ wherever
$0<h^*<\operatorname*{ess\,sup}h$.
The Brothers--Ziemer rigidity theorem, in the form stated and proved
by Ferone--Volpicelli \cite[Theorem~1.1]{FeroneVolpicelli2003},
therefore gives that $h$ is a translate of $h^*$, as desired.
This is also the alternative indicated by Kesavan
\cite[Remark~3.2.2, p.~60]{Kesavan2006}; the energy argument displayed above 
is the needed adaptation of his result to signed $f$.

By translating $\Omega$ and $h$, we may suppose $h=h^*$ almost
everywhere. Put $D=B_{\alpha^{1/n}}$.
The zero extension of $h\in H^1_0(\Omega)$ vanishes
quasi-everywhere outside $\Omega$. Since $h=h^*$ almost
everywhere, their quasi-continuous representatives agree
quasi-everywhere. Positivity of the radial representative in $D$
therefore implies that $D\setminus\Omega$
has zero $H^1$-capacity. In dimension one this set is empty.
If $u$ is continuous in $\Omega$, equality with the radial profile
gives $\{u>0\}=\Omega\cap D$, proving the nodal set assertion.
If $\alpha=1$, then $D=B$, and $h=h^*$ together with
$|\Omega|=|B|$ gives $|\Omega\setminus B|=0$.
Openness then implies $\Omega\subset B$, proving the domain assertion.

The argument is analogous for $u_-$ and $W_\beta$. 
\end{proof}

\subsection*{Reduction to an endpoint inequality}
The proofs of the main theorems will first invoke the Talenti-based rearrangement from Lemma \ref{lem:level}, which applies in all dimensions, and then for each situation an endpoint inequality will be needed on $J_{n,p}$, as follows. 

Suppose $p<\infty$. Raise the bounds in Lemma \ref{lem:level} to the power $p$ and integrate. Using that 
$\beta\le1-\alpha$ and so $W_\beta \leq W_{1-\alpha}$, one finds 
\begin{equation}\label{eq:dimensionpbound}
 \begin{split}
 \int_\Omega |u|^p\dd
 &\le\omega_n\left[
    \int_0^1V_\alpha(t)^p\,dt
   +\int_0^1W_\beta(t)^p\,dt\right]\\
 &\le\omega_n J_{n,p}(\alpha) .
 \end{split}
\end{equation}
Since $V_1$ and $W_1$ are the radial profiles of $v$ and $w$,
the endpoint values of $J_{n,p}(a)$ satisfy
\begin{equation}\label{eq:comparisonendpoints}
 \omega_n J_{n,p}(1)=\int_Bv^p\dd,
 \qquad
 \omega_n J_{n,p}(0)=\int_Bw^p\dd.
\end{equation}
(In higher dimensions these integrals may be infinite, in which case the inequalities
and identities are understood in the extended sense.)
Hence the norm comparison inequality of type \eqref{eq:lpquestion} will follow provided we establish an endpoint inequality  
\[
 J_{n,p}(a)\le\max\{J_{n,p}(0),J_{n,p}(1)\},
 \qquad 0\le a\le1.
\]
For the equality statements we will work with Lemma \ref{lem:nodalrigidity}.  

The next two sections establish endpoint inequalities
for $n=1,2$, initially with $p=2$ and then with general $p$. Section~\ref{sec:bangbangproof} develops an endpoint inequality for bang-bang sources.

\FloatBarrier
{\section{Proof of Theorem~\ref{thm:planarlp} in dimension two for
\texorpdfstring{$p=2$}{p=2}}
\label{sec:l2proof}}

The $L^2$ endpoint argument in the plane 
exposes the geometry behind the paired comparison with rather few complications, and so in this section we specialize to $n=2$ and $p=2$.

Lemma~\ref{lem:endpoint} below is the central step in this planar $L^2$ argument. It says that assigning all
the area to one disk gives the largest estimate on $J_{2,2}$. We emphasize again that the source and its integrated rearrangement $F$ are
held fixed while the disk areas are varied, and we do not assume the concave function $F$ to be symmetric about $t=1/2$. The lemma involves only properties of $F$ and so we state it in those terms. 

To prove the inequality in Theorem~\ref{thm:planarlp} for $(n,p)=(2,2)$, simply combine Lemma~\ref{lem:endpoint} with
\eqref{eq:dimensionpbound}--\eqref{eq:comparisonendpoints}. The equality characterization is proved later in
Section~\ref{sec:lp}.

For simplicity we write
$J=J_{2,2}$ in the lemma, so that  
\[
 J(a)=\int_0^1V_a(t)^2\,dt + \int_0^1W_{1-a}(t)^2\,dt .
\]
Due to the compact supports, these integrals run over only $0<t<a$ and $0<t<1-a$, respectively. 

\begin{lemma}[Endpoint maximum for the $L^2$ functional]\label{lem:endpoint}
If $F\in C([0,1])$ is concave with $F(0)=F(1)=0$ then $J$ is continuous on $[0,1]$ and
\[
 J(a)\le\max\{J(0),J(1)\} \qquad \text{for } 0\le a\le1.
\]
If $F\not\equiv0$ then the inequality is strict, with $J(a)<\max\{J(0),J(1)\}$ for $0<a<1$.
\end{lemma}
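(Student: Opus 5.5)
The plan is to differentiate $J$ in $a$ and show that $J'$ changes sign at most once, from negative to positive. Then $J$ is decreasing and afterwards increasing, so its maximum on $[0,1]$ is attained only at the endpoints. If $F\equiv0$ everything vanishes and there is nothing to prove, so assume $F\not\equiv0$, which gives $F>0$ on $(0,1)$. Continuity of $J$ on $[0,1]$ will come from dominated convergence. By concavity and $F(0)=F(1)=0$, the function $F(s)/s$ is bounded on $(0,1]$ and so is $F(1-s)/s$. Hence $V_a$ and $W_{1-a}$ are bounded by $C(1+\log(1/t))$ uniformly in $a$, and they depend continuously on $a$ for each fixed $t$.

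For $n=2$ we have $V_a(t)=\tfrac14\int_t^aF(s)s^{-1}\,ds$ and $W_{1-a}(t)=\tfrac14\int_t^{1-a}F(1-s)s^{-1}\,ds$. For $0<a<1$, differentiate under the integral and use $V_a(a)=0$ and $\partial_aV_a(t)=F(a)/(4a)$. Also use Fubini in the form
\[
\int_0^aV_a\,dt=\tfrac14\int_0^aF(s)\,ds .
\]
Together these give
\[
\frac{d}{da}\int_0^aV_a^2\,dt=\frac{F(a)}{8a}\int_0^aF\,ds .
\]
Symmetrically, the second term has derivative $-\frac{F(a)}{8(1-a)}\int_a^1F\,ds$. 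Writing $A(a)=\frac1a\int_0^aF$ and $B(a)=\frac1{1-a}\int_a^1F$ for the two interval averages of $F$, we obtain
\[
J'(a)=\tfrac18F(a)\,\varphi(a),\qquad \varphi(a)=A(a)-B(a).
\]
Since $F(a)>0$, the sign of $J'$ is the sign of $\varphi$.

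The key step, and the main obstacle, is to show that $\varphi$ has exactly one zero on $(0,1)$ and crosses it upward. At the ends, $\varphi(0^+)=-\int_0^1F<0$ and $\varphi(1^-)=\int_0^1F>0$. Differentiating gives $A'=(F(a)-A)/a$ and $B'=(B-F(a))/(1-a)$. Hence at any zero $a_0$, where $A=B=c$,
\[
\varphi'(a_0)=\bigl(F(a_0)-c\bigr)\Bigl(\frac1{a_0}+\frac1{1-a_0}\Bigr).
\]
It remains to show $F(a_0)>c$, and here concavity enters. A concave function is either nondecreasing on $[0,a_0]$ or nonincreasing on $[a_0,1]$.
\begin{itemize}
\item In the first case, $A(a_0)<F(a_0)$ strictly, because $F(0)=0<F(a_0)$ and $F$ is continuous.
\item In the second case, $B(a_0)<F(a_0)$ strictly, because $F(1)=0<F(a_0)$.
\end{itemize}
Either way $c<F(a_0)$, so $\varphi'(a_0)>0$. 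A $C^1$ function whose every zero is a strict upward crossing has at most one zero. By the intermediate value theorem it therefore has exactly one zero $a_0$, with $\varphi<0$ on $(0,a_0)$ and $\varphi>0$ on $(a_0,1)$.

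Consequently $J$ is strictly decreasing on $[0,a_0]$ and strictly increasing on $[a_0,1]$. For every $0<a<1$ this gives $J(a)<\max\{J(0),J(1)\}$, with continuity at the endpoints supplying the closed-interval statement.
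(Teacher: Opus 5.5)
Your proof is correct, and its backbone is the same as the paper's. Your $\varphi$ equals $H(a)/(a(1-a))$, where $H(a)=\int_0^aF\,ds-a\int_0^1F\,ds$. So your formula for $J'$ is exactly the paper's $J'(a)=F(a)H(a)/(8a(1-a))$. The paper reaches it through the identity $\int_0^1V_a^2\,dt=\frac18\int_0^aF(s)A(s)s^{-1}\,ds$ with $A(s)=\int_0^sF$, rather than by Leibniz's rule and $\int_0^aV_a\,dt=\frac14\int_0^aF$.

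Where you differ is the single-sign-change step. The paper argues globally: $H(0)=H(1)=0$ and $H'=F-\int_0^1F$. Since $\{F>\int_0^1F\}$ is an interval $(a_-,a_+)$, $H$ decreases, increases, then decreases, and so has exactly one interior zero. You argue instead by transversality at zeros. The two are the same fact seen differently. At a zero, $c=a_0A(a_0)+(1-a_0)B(a_0)=\int_0^1F$, so your inequality $F(a_0)>c$ is precisely $H'(a_0)>0$, i.e.\ $a_0\in(a_-,a_+)$. Your version is slightly more economical: it needs only that a concave function is monotone on one side of any point, and it never locates $a_\pm$. The paper's version gives the whole shape of $H$ at once.

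One auxiliary claim in your continuity step is false. Concavity with $F(0)=F(1)=0$ does not make $F(s)/s$ bounded, because $F'(0^+)$ may be infinite. For instance, $F(s)=s^{3/4}-s$ is concave with zero endpoints and has $F(s)/s\to\infty$. It is even the integrated rearrangement of the mean-zero profile $f^\downarrow(s)=\tfrac34s^{-1/4}-1\in L^2(0,1)$, so the issue arises in the paper's setting.

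The logarithmic bound you actually use is still true, for a simpler reason: $0\le V_a(t)\le\frac14\norm{F}_\infty\log(1/t)$, and the same bound holds for $W_{1-a}$. With this justification substituted, your dominated convergence argument goes through. Alternatively, the integrand in the paper's identity is bounded by $\norm{F}_\infty^2$, which gives continuity on $[0,1]$ immediately, including at the endpoints.
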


\begin{proof}
Letting
\[
 A(a)=\int_0^aF(s)\,ds,
\]
we shall use the identity 
\begin{equation}\label{eq:tonelli}
\int_0^1V_a(t)^2\,dt
 =\frac18\int_0^a\frac{F(s)A(s)}s\,ds ,
\end{equation}
which follows by expanding the square in the definition of $V_a$ in \eqref{eq:Vanhigher} with $n=2$ to produce a double
integral with kernel $\min(r,s)/(rs)$, and then splitting across $r=s$. Note that the integrand in \eqref{eq:tonelli} is bounded
by $\norm{F}_\infty^2$, and so this identity proves continuity of the left side even at
$a=0$. It follows from \eqref{eq:tonelli} applied to $F(t)$ and analogously to $\widetilde{F}(t)=F(1-t)$ that
$J(a)$ is continuous on $[0,1]$, proving the first claim in the lemma. Note for use below that $\widetilde{A}(1-a)=A(1)-A(a)$. 

For $0<a<1$, differentiation of \eqref{eq:tonelli} yields that 
\begin{equation*}
 J'(a)=\frac{F(a)}{8a(1-a)} H(a)
\end{equation*}
where $H(a)=A(a)-aA(1)$. Thus the slope of $J$ is controlled by the sign of $H$. 

If $F\equiv0$ then $A \equiv H \equiv 0$ and so $J$ is constant and the lemma is immediate. Otherwise $F(a)>0$ on $(0,1)$ by concavity, and so $A(1)>0$. Note that 
\[
 H'(a)=F(a)-A(1),\qquad H(0)=H(1)=0.
\]
Since the maximum value of $F$ strictly exceeds its average value $A(1)$, the concavity and zero endpoint values of $F$ imply that the set $\{a:F(a)>A(1)\}$ is an interval
$(a_-,a_+)$ with $0<a_-<a_+<1$, and that $F<A(1)$ on $(0,a_-)$ and on $(a_+,1)$. Hence $H$ is strictly decreasing on $[0,a_-]$, strictly increasing on
$(a_-,a_+)$, and strictly decreasing on $[a_+,1]$. Therefore $H$ has exactly one interior zero point $c$, occurring between $a_-$ and $a_+$, and
\[
 H(a)<0\quad \text{on } (0,c),\qquad H(a)>0\quad \text{on } (c,1).
\]
Recalling that $H$ determines the sign of $J^\prime$, we conclude that $J$ strictly decreases up until $c$ and
strictly increases thereafter and so $ J(a) < \max\{J(0),J(1)\}$ for all $0<a<1$. 
\end{proof}

\section{Proof of Theorem~\ref{thm:planarlp} in the general case}
\label{sec:lp}

The proof for general $p$ when $n=2$ will be broken into three parts: $p=1$, $1<p\le2$, and $2\le p<\infty$. This overlap at $p=2$ is deliberate, because the $L^2$
case provides a useful meeting point for two rather different
monotonicity arguments. The first case $p=1$ is special because the contributions fit together exactly. For $1 < p \leq 2$, a difference of
moments is shown to be monotone, while above $p=2$ a ratio of moments takes over.

The one-dimensional case ($n=1$) is easier and will be treated later in the section. 

In both dimensions one and two, the endpoint $p=\infty$ is supplied by Theorem~\ref{thm:linfty}, whose proof for all dimensions is given in Section~\ref{sec:linfty}.

\subsection{The \texorpdfstring{$L^1$}{L1} case, \texorpdfstring{$n=2$}{n=2}}
\label{sec:planarlone}

For $p=1$, no endpoint
optimization is needed. Lemma~\ref{lem:level}, Tonelli's theorem and
$\alpha+\beta\le1$ give that 
\[
 \int_\Omega |u|\dd
 \le\pi\int_0^1\bigl(V_\alpha(t)+W_\beta(t)\bigr)\,dt
 =\frac{\pi}{4}\left[
       \int_0^\alpha F(s)\,ds
      +\int_0^\beta F(1-s)\,ds\right]
 \le\frac{\pi}{4}\int_0^1F(s)\,ds.
\]
Since $V_1$ and $W_1$ are the profiles of $v$ and $w$, the final
quantity equals both $\int_Bv\dd$ and $\int_Bw\dd$.

\begin{remark}
Equality can reign even when the solution changes sign.
Indeed, take $0<q<R=2^{-1/2}$ in the sign-changing construction of
Section~\ref{sec:highercounterexamples}; that construction is developed 
for higher dimensional counterexamples but it applies
unchanged for $n=2$. One obtains
\[
 F(t)=\min\{t,q^2,1-t\},\qquad
 \int_\Omega |u|\dd
 =\int_Bv\dd=\int_Bw\dd
 =\frac{\pi}{4}(q^2-q^4).
\]
\end{remark}

\subsection{Endpoint lemma for the \texorpdfstring{$L^p$}{L^p} functional when \texorpdfstring{$1 < p \leq 2, n=2$}{1 < p <= 2, n=2}}

Lemma~\ref{lem:lpendpoint} below extends the planar comparison from $L^2$ to every $L^p$
with $1<p \leq 2$. Again let $F\in C([0,1])$ be nonnegative and concave, with
$F(0)=F(1)=0$. 

The functional $J_p=J_{2,p}$ defined by
\eqref{eq:Jnp} with $n=2$ is 
\begin{equation}
 J_p(a)
 = \int_0^a V_a(t)^p \, dt + \int_0^{1-a} W_{1-a}(t)^p \, dt , \qquad 0 \leq a \leq 1 . \label{eq:Jporig} 
\end{equation}
Its endpoint values are defined by continuity, as justified by the
logarithmic bounds in the proof below. 

\begin{lemma}[Endpoint comparison for $1<p\le2$]\label{lem:lpendpoint}
When $1<p\le2$ we have 
\[
 J_p(a)\le  \max\{J_p(0),J_p(1)\} 
 \qquad \text{for } 0\le a\le1.
\]
If $F\not\equiv0$ then the inequality is strict for every $0<a<1$.
\end{lemma}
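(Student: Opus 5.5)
The plan is to differentiate $J_p$ in $a$ and reduce its sign to the sign of the moment difference $D_p=D_{2,p}$ from \eqref{eq:momentdifference}. For $n=2$ the weight $a^{2/n-1}$ equals $1$, so
\[
 D_p(a)=M_{p-1}(P_a)-M_{p-1}(Q_a).
\]
This mirrors Lemma~\ref{lem:endpoint}, where the sign of $J'$ was governed by $H$.

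\textbf{Step 1: the derivative formula.} From \eqref{eq:Vanhigher} with $n=2$ we have $V_a(t)=\frac14\int_t^aF(s)s^{-1}\,ds$, hence $\partial_aV_a(t)=F(a)/(4a)$ for $t<a$. The boundary term vanishes because $V_a(a)=0$. Substituting $t=a\tau$ gives
\[
 \frac{d}{da}\int_0^aV_a^p\,dt=\frac{pF(a)}{4a}\int_0^aV_a^{p-1}\,dt=p\,4^{-p}F(a)\,M_{p-1}(P_a).
\]
The $W$ term is handled the same way, using $\widetilde F(1-a)=F(a)$, and contributes with the opposite sign. Hence
\[
 J_p'(a)=p\,4^{-p}F(a)\,D_p(a),\qquad 0<a<1.
\]
Continuity of $J_p$ at the endpoints, and finiteness of the moments, will follow from the logarithmic bound $V_a(t)\le C\log(a/t)$, using $F\le\|F\|_\infty$. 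As a consistency check, for $p=2$ one gets $M_1(P_a)=A(a)/a$ and $M_1(Q_a)=(A(1)-A(a))/(1-a)$, so $D_2=H/(a(1-a))$, exactly as in Lemma~\ref{lem:endpoint}.

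\textbf{Step 2: boundary signs.} Assume $F\not\equiv0$, so $F>0$ on $(0,1)$ and $J_p'$ has the sign of $D_p$. Write $P_a(\tau)=\int_\tau^1F(a\sigma)\sigma^{-1}\,d\sigma$. As $a\downarrow0$, $P_a\to0$ and hence $M_{p-1}(P_a)\to0$, since $p-1>0$. Meanwhile $Q_a$ tends to $16$ times the positive profile $W_1$. Therefore $D_p(0^+)<0$, and symmetrically $D_p(1^-)>0$.

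\textbf{Step 3: single crossing.} It then suffices to show that $D_p$ has exactly one sign change on $(0,1)$, from negative to positive. This forces $J_p$ to decrease strictly and then increase strictly, which gives the strict inequality for $0<a<1$. This is the main obstacle. Neither moment is monotone in $a$ in general: already $A(a)/a$ decreases. So instead I would aim to prove that $D_p'(a)>0$ at every zero of $D_p$.

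Differentiating in $a$ gives
\[
 \frac{d}{da}M_{p-1}(P_a)=(p-1)\int_0^1P_a^{p-2}\int_\tau^1F'(a\sigma)\,d\sigma\,d\tau,
\]
and similarly for $Q_a$. I would rewrite the inner integral via concavity, $F'(a\sigma)\le F(a\sigma)/(a\sigma)$. The idea is to compare the $(p-1)$-moments at a crossing with the $L^1$ information coming from $H$, using that for $0<p-1\le1$ the map $y\mapsto y^{p-1}$ is concave. Jensen or power-mean monotonicity should then transfer the single-crossing structure of $H$ found in Lemma~\ref{lem:endpoint} to $D_p$.

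If this direct derivative estimate proves too messy, the fallback is a homotopy in $p\in(1,2]$ starting from the known case $p=2$. One would track the zero $c_p$ of $D_p$ and show that no second zero can be born, by verifying that a double zero of $D_p$ is impossible. The restriction $p\le2$ enters exactly through the concavity of $y^{p-1}$. I expect this concavity step to be where all the real work lies; the remaining steps are routine.
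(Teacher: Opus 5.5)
Your Steps~1 and~2 are correct and match the paper's reduction: $J_p'(a)=p\,4^{-p}F(a)D_p(a)$, with $D_p(0^+)<0<D_p(1^-)$. (One slip: since $n^2=4$, $Q_a\to4W_1$, not $16W_1$; the sign is unaffected.)

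The genuine gap is Step~3. It is the whole content of the lemma, and you leave it as a plan. The concrete estimate you propose also points the wrong way. The formula $\partial_aP_a(\tau)=\int_\tau^1F'(a\sigma)\,d\sigma=(F(a)-F(a\tau))/a$ is exact. Inserting $F'(a\sigma)\le F(a\sigma)/(a\sigma)$ only yields the upper bound $\partial_aP_a\le P_a/a$. Showing $D_p'>0$ needs a lower bound on the $P$-contribution and an upper bound on the $Q$-contribution. Treating the two moments separately does not close the argument, since, as you note, neither is monotone. The homotopy fallback in $p$ just restates the problem: ruling out a double zero means proving $D_p'\ne0$ wherever $D_p=0$.

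The missing idea is to couple the two profiles with weights $1-a$ and $a$. Because $(1-a)\cdot as+a\cdot(1-(1-a)s)=a$, concavity gives $(1-a)F(as)+aF(1-(1-a)s)\le F(a)$. Dividing by $s$ and integrating yields $\mathcal L_a:=(1-a)P_a+aQ_a\le F(a)\log(1/t)$. Now use the exact $a$-derivatives together with the integration by parts $M_{p-1}(P_a)=(p-1)\int_0^1P_a^{p-2}F(at)\,dt$ and its analogue for $Q_a$. One gets
\[
 a(1-a)D_p'(a)=(p-1)F(a)\bigl[(1-a)M_{p-2}(P_a)+aM_{p-2}(Q_a)\bigr]-\bigl[(1-a)M_{p-1}(P_a)+aM_{p-1}(Q_a)\bigr].
\]
The two brackets are then controlled by Jensen's inequality in opposite directions:
\begin{itemize}
\item The map $z\mapsto z^{p-1}$ is concave and increasing, so the second bracket is at most $\int_0^1\mathcal L_a^{p-1}\,dt\le F(a)^{p-1}\Gamma(p)$.
\item The map $z\mapsto z^{p-2}$ is convex and decreasing, so the first bracket is at least $F(a)^{p-2}\Gamma(p-1)$.
\end{itemize}
The identity $(p-1)\Gamma(p-1)=\Gamma(p)$ then gives $D_p'\ge0$ on all of $(0,1)$, not merely at zeros of $D_p$. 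These two Jensen steps are exactly where $p\le2$ is used.

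Combined with your boundary signs, $J_p$ decreases strictly near $0$, may be constant on a middle interval, and increases strictly near $1$. This gives the strict inequality for $0<a<1$.

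Note also that the negative-power moments $M_{p-2}$ need a lower bound on the profiles, which your logarithmic upper bound does not supply. Concavity gives $F(as)\ge sF(a)$ and $F(1-(1-a)s)\ge sF(a)$, hence $P_a(t),Q_a(t)\ge F(a)(1-t)$, and this suffices.
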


\begin{proof}
The case $F\equiv0$ is immediate. Assume otherwise, so that $F(a)>0$ for $0<a<1$.

We use $P_a,Q_a$ from \eqref{eq:rescaledprofiles} with $n=2$ and write $D_p=D_{2,p}$ as in \eqref{eq:momentdifference}. For $0<a,t<1$, these profile formulas are
\begin{equation}\label{eq:planarprofiles}
 \begin{aligned}
 P_a(t)&=\int_t^1\frac{F(as)}s\,ds=4V_a(at),\\
 Q_a(t)&=\int_t^1\frac{F(1-(1-a)s)}s\,ds
       =4W_{1-a}((1-a)t).
 \end{aligned}
\end{equation}
The moment difference specializes to
\begin{equation} \label{eq:momentdifference2dim}
 D_p(a)=\int_0^1(P_a^{p-1}-Q_a^{p-1})\,dt.
\end{equation}
Differentiating \eqref{eq:Jporig}, with technical justification given below, and recalling that $V_a(a)=W_{1-a}(1-a)=0$, one finds 
\begin{equation}\label{eq:Jpderivative}
 J_p'(a)=\frac{p}{4^p}F(a)D_p(a).
\end{equation}
We will show $D_p$ is increasing.

First, we compute the $a$-derivatives from $4V_a(at)$ and $4W_{1-a}((1-a)t)$, getting 
\begin{equation} \label{eq:aderivs}
 \partial_aP_a(t)=\frac{F(a)-F(at)}a,\qquad
 \partial_aQ_a(t)=\frac{F(1-(1-a)t)-F(a)}{1-a}.
\end{equation}
Next, integration by parts in $t$ gives
\[
 \int_0^1P_a(t)^{p-1}\,dt
 =(p-1)\int_0^1P_a(t)^{p-2}F(at)\,dt,
\]
where the boundary terms vanish because $P_a(1)=0$ and $P_a(t)$ grows at most logarithmically as $t \to 0$, by below. The corresponding identity holds for $Q_a$ with $F(1-(1-a)t)$. Using these formulas, we compute that 
\begin{equation}\label{eq:Deta}
 \begin{split}
 a(1-a)D_p'(a)
 ={}&(p-1) F(a)
 \left[(1-a)\int_0^1P_a^{p-2}\,dt
            +a\int_0^1Q_a^{p-2}\,dt\right]\\
 &-\left[(1-a)\int_0^1P_a^{p-1}\,dt
            +a\int_0^1Q_a^{p-1}\,dt\right].
 \end{split}
\end{equation}

Now we analyze the right side of \eqref{eq:Deta}. Concavity of $F$ and the trivial identity
$(1-a)as+a(1-(1-a)s)=a$ imply that 
\[
 (1-a)F(as)+aF(1-(1-a)s)\le F(a).
\]
Dividing by $s$ and integrating over $t<s<1$ bounds the linear combination
\begin{equation*}
 \mathcal L_a(t):=(1-a)P_a(t)+aQ_a(t)\le F(a)\log(1/t).
\end{equation*}
Since $z \mapsto z^{p-1}$ is concave and increasing for $1<p\le2$, we deduce
\[
 (1-a)\int_0^1P_a^{p-1}\,dt+a\int_0^1Q_a^{p-1}\,dt
 \le\int_0^1\mathcal L_a^{p-1}\,dt
 \le F(a)^{p-1}\Gamma(p).
\]
Similarly, since $z \mapsto z^{p-2}$ is convex and decreasing, one has 
\[
 (1-a)\int_0^1P_a^{p-2}\,dt+a\int_0^1Q_a^{p-2}\,dt
 \ge\int_0^1\mathcal L_a^{p-2}\,dt
 \ge F(a)^{p-2}\Gamma(p-1).
\]
Substitution into \eqref{eq:Deta}, together with the functional equation 
$(p-1)\Gamma(p-1)=\Gamma(p)$, proves that $D_p'(a)\ge0$.

Incidentally, when $p=2$, formula~\eqref{eq:Deta}
reduces to
\[
 a(1-a)D_2'(a)
 =F(a)-\int_0^1\mathcal L_a(t)\,dt,
\]
which is immediately nonnegative since
$\mathcal L_a(t)\leq F(a)\log(1/t)$ and $\int_0^1\log(1/t)\,dt=1$.

To justify the differentiations through the integral and integrations by parts in the preceding argument, consider $a$
in a compact subinterval of $(0,1)$ and rescale both integrals in \eqref{eq:Jporig} to $(0,1)$ to get that 
\begin{equation*}
 J_p(a)
=\frac1{4^p}\left[
   a\int_0^1P_a(t)^p\,dt
  +(1-a)\int_0^1Q_a(t)^p\,dt\right] , \qquad 0<a<1 .
\end{equation*}
Near $t=0$, concavity and the zero endpoints give that $F(as) \geq s F(a)$ and $F(1-(1-a)s) \geq s F(a)$. Hence $P_a(t),Q_a(t)\ge F(a)(1-t)$, which controls these functions to the power $p-2$ near
$t=1$. The upper bound
$P_a,Q_a\le\norm{F}_\infty\log(1/t)$ controls the nonnegative
powers and boundary terms used above.

Now we can prove the strict endpoint comparison for $J_p(a)$. As $a\downarrow0$, $P_a\to0$ in $L^{p-1}(0,1)$ and $Q_a$
converges to a nonzero function. Thus $D_p(a)$ has a negative
limit at $0$. Its limit at $1$ is positive by an analogous argument.
Since $D_p$ is increasing, \eqref{eq:Jpderivative} shows that
$J_p(a)$ decreases (and does so strictly for $a$ near $0$) and then increases (strictly for $a$ near $1$), with a possible
constant interval between. Its maximum is therefore attained at an endpoint, and every interior value lies strictly below the larger of the two endpoint
values. Continuity of $J(a)$ at the endpoints $a=0,1$ follows
by dominated convergence, using the logarithmic upper bound.
\end{proof}

\subsection{Moment ratios and endpoint lemma for \texorpdfstring{$2\le p<\infty, n=2$}{2 <= p < infinity, n=2}}

For $p\ge2$, the moment difference $D_p(a)$ need not be monotone. The ratio of the two moments is more
accommodating, and a supporting line to $F$ will supply the comparison. The case $F\equiv0$ is immediate and so in the lemma below we assume $F\not\equiv0$. The profiles $P_a,Q_a$ are given by \eqref{eq:planarprofiles} and $D_p=D_{2,p}$ is the moment difference for $n=2$ in \eqref{eq:momentdifference2dim}.

\begin{lemma}[Strict monotonicity of moment ratios]\label{lem:largelp}
Let $F\in C([0,1])$ be nonnegative, concave, and not identically zero, with
$F(0)=F(1)=0$. Then for each fixed $q\ge1$, the ratio
\begin{equation*}
 R_F(a)=\frac{M_q(P_a)}{M_q(Q_a)},
\end{equation*}
is strictly increasing on $(0,1)$, from $0$ to $+\infty$.
Consequently, $J_p(a)$ strictly
decreases to a unique minimum and then strictly increases, when $2\le p<\infty$ is fixed, and hence 
\[
 J_p(a)<\max\{J_p(0),J_p(1)\}
 \qquad \text{for } 0<a<1 .
\]
\end{lemma}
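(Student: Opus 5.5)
The plan is to differentiate $\log R_F$ in $a$ using the same two ingredients as in the proof of Lemma~\ref{lem:lpendpoint}: the $a$-derivatives \eqref{eq:aderivs} and the integration by parts identity. Write $M_q$ for $M_q(P_a)$ or $M_q(Q_a)$ as appropriate. Integration by parts gives
\[
 M_q(P_a)=q\int_0^1P_a^{q-1}F(at)\,dt,\qquad M_q(Q_a)=q\int_0^1Q_a^{q-1}F(1-(1-a)t)\,dt .
\]
Differentiating under the integral, justified by the logarithmic bounds $F(a)(1-t)\le P_a,Q_a\le\norm{F}_\infty\log(1/t)$ from the proof of Lemma~\ref{lem:lpendpoint}, I expect
\[
 \frac{d}{da}\log R_F(a)
 =\frac{q\int_0^1P_a^{q-1}\bigl(F(a)-F(at)\bigr)dt}{a\,M_q(P_a)}
 +\frac{q\int_0^1Q_a^{q-1}\bigl(F(a)-F(1-(1-a)t)\bigr)dt}{(1-a)\,M_q(Q_a)} .
\]
The goal is to show this is strictly positive for $0<a<1$.

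For the positivity I will use a supporting line to $F$ at $a$. By concavity there is a slope $\lambda$ with $F(x)\le F(a)+\lambda(x-a)$ on $[0,1]$.
\begin{itemize}
\item At $x=at$ this gives $F(a)-F(at)\ge\lambda a(1-t)$.
\item At $x=1-(1-a)t$ it gives $F(a)-F(1-(1-a)t)\ge-\lambda(1-a)(1-t)$.
\end{itemize}
The weights $a$ and $1-a$ then cancel the prefactors $1/a$ and $1/(1-a)$, and one is left with
\[
 \frac{d}{da}\log R_F(a)\ \ge\ q\lambda\bigl[N(P_a)-N(Q_a)\bigr],\qquad N(Y)=\frac{\int_0^1Y^{q-1}(1-t)\,dt}{\int_0^1Y^q\,dt}.
\]

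The main obstacle is the sign of this bracket against the sign of $\lambda$. When $\lambda\ge0$ the $P$-term is already nonnegative, since $F(at)\le F(a)$, and trouble comes only from the $Q$-term; the case $\lambda\le0$ is symmetric. I will first try to exploit the freedom in choosing the supporting line, together with the fact that $N(Y)$ behaves like an inverse size of $Y$. The intended mechanism is that when $\lambda>0$ the point $a$ lies left of the maximum of $F$, so $Q_a$ is built from larger values of $F$ and should be the larger profile, giving $N(Q_a)\le N(P_a)$. To make this rigorous I would compare $Q_a$ and $P_a$ pointwise through the supporting line. Failing that, I would split the $Q$-integral at the level where $F(1-(1-a)t)=F(a)$ and bound the bad part by the good $P$-part using the comparison $\mathcal L_a(t)\le F(a)\log(1/t)$ from the proof of Lemma~\ref{lem:lpendpoint}. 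Strictness should come from $F\not\equiv0$, since then the inequalities cannot all be equalities on a set of positive measure.

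The limits are routine. As $a\downarrow0$, $P_a\to0$ in $L^q$ by dominated convergence with the logarithmic bound, while $Q_a\to\int_t^1F(1-s)s^{-1}\,ds\not\equiv0$; hence $R_F\to0$. The limit $R_F\to\infty$ as $a\uparrow1$ is analogous.

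The consequence then follows directly. Take $q=p-1\ge1$. By \eqref{eq:Jpderivative} we have $J_p'(a)=p4^{-p}F(a)D_p(a)$ with $F(a)>0$, and $D_p(a)=M_q(Q_a)\bigl(R_F(a)-1\bigr)$. Strict monotonicity of $R_F$ from $0$ to $\infty$ gives a unique $c$ with $D_p<0$ on $(0,c)$ and $D_p>0$ on $(c,1)$. So $J_p$ strictly decreases and then strictly increases, and continuity at the endpoints, by dominated convergence as in Lemma~\ref{lem:lpendpoint}, yields $J_p(a)<\max\{J_p(0),J_p(1)\}$ for $0<a<1$.
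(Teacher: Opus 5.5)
There is a genuine gap in the central positivity step, and the reduction you propose cannot close it. Your formula for $\frac{d}{da}\log R_F$, the endpoint limits, and the deduction of the $J_p$ statement from $D_p=M_q(Q_a)(R_F-1)$ are all fine.

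Replacing $F(a)-F(at)$ and $F(a)-F(1-(1-a)t)$ by their supporting-line bounds, while keeping the true profiles, gives the valid lower bound $q\lambda[N(P_a)-N(Q_a)]$. This bound can be strictly negative while $(\log R_F)'(a)>0$. Let $F$ be piecewise linear through $(0,0)$, $(\epsilon,1)$, $(t_1,1+\delta(t_1-\epsilon))$, $(1,0)$, with $\epsilon<a<t_1$. Then $\lambda=\delta>0$ is the only supporting slope at $a$, so there is no freedom in the choice of line to exploit. Take $q=1$, so that $N(Y)=\tfrac12\big/\int_0^1Y\,dt$. Then $\int_0^1P_a\,dt=a^{-1}\int_0^aF$ and $\int_0^1Q_a\,dt=(1-a)^{-1}\int_a^1F$, which are close to $1$ and $\tfrac12$ when $\epsilon$, $\delta$ and $t_1-a$ are small. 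Hence $N(P_a)-N(Q_a)\approx-\tfrac12$ and your bound is about $-\delta/2$, whereas the exact value is $a(1-a)(\log R_F)'(a)\approx a>0$.

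In this example $P_a(t)\approx\log(1/t)$ dominates $Q_a(t)\approx1-t$, so the heuristic that $Q_a$ is the larger profile when $\lambda>0$ is false. The loss sits in the $Q$-term. $F$ drops far below its tangent line to the right of $a$, so $F(a)-F(1-(1-a)t)\approx1-t$ gets replaced by the negative quantity $-\lambda(1-a)(1-t)$. Your fallback using $\mathcal L_a\le F(a)\log(1/t)$ is not developed. Since the reduced inequality is false, any fallback would have to work on the unreduced expression.

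The missing idea, which is the paper's route, is to substitute the supporting line $L$ into the profiles themselves, not only into the source differences.

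After integrating by parts,
$a(1-a)(\log R_F)'(a)=qF(a)\bigl[(1-a)M_{q-1}(P_a)/M_q(P_a)+aM_{q-1}(Q_a)/M_q(Q_a)\bigr]-1$.
So $F$ enters only through $F(a)=L(a)$ and the shape quotients $M_{q-1}/M_q$.

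A chord argument shows that $F/L$ increases on $(0,a)$ and decreases on $(a,1)$. Hence $P_a/\overline P_a$ and $Q_a/\overline Q_a$ are increasing and at most $1$, where $\overline P_a,\overline Q_a$ are the profiles built from $L$. A reversed weighted Chebyshev inequality then gives $M_{q-1}(P_a)/M_q(P_a)\ge M_{q-1}(\overline P_a)/M_q(\overline P_a)$, and likewise for $Q_a$.

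Therefore $(\log R_F)'(a)\ge(\log R_L)'(a)=q\lambda[N(\overline P_a)-N(\overline Q_a)]$. This is exactly your bracket, but evaluated on the explicit profiles $\overline P_a=ch+d_0k$ and $\overline Q_a=ch+d_1k$. Here $c=F(a)$, $h=1-t$, $k=-\log t-(1-t)$, $d_0=L(0)$, $d_1=L(1)$, and $d_1-d_0=\lambda$.

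For this family, $N(ch+dk)=\partial_c\log\bigl(\int_0^1(ch+dk)^q\,dt\bigr)^{1/q}$ is decreasing in $d$ by a Cauchy--Schwarz computation. That ties the sign of the bracket to the sign of $\lambda$. Strictness follows because $F\ne L$ on a set of positive measure, which makes one Chebyshev step strict.
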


\begin{proof}
Fix $q\ge1$. Write $b=1-a$ and $c=F(a)>0$.
By the $a$-derivative formulas in \eqref{eq:aderivs} and integration by parts, one has 
\[
 \partial_a P_a(t)=\frac{c-F(at)}a,\qquad
 \partial_a Q_a(t)=\frac{F(1-bt)-c}b,
\]
\[
 M_q(P_a)=q\int_0^1P_a(t)^{q-1}F(at)\,dt,
 \qquad
 M_q(Q_a)=q\int_0^1Q_a(t)^{q-1}F(1-bt)\,dt,
\]
where the boundary terms vanish because $P_a$ and $Q_a$ vanish at $t=1$ and grow at most logarithmically as $t \to 0$,  as justified at the end of this proof. Thus one finds
\begin{equation}\label{eq:logrderivative}
 ab\,\frac{d}{da}\log R_F(a)
 =qc\left[
 b\,\frac{M_{q-1}(P_a)}{M_q(P_a)}
 +a\,\frac{M_{q-1}(Q_a)}{M_q(Q_a)}
 \right]-1.
\end{equation}
For $q=1$, recall $M_0(P_a)=M_0(Q_a)=1$ by convention. We will bound the right-hand side from below by replacing $F$ with a supporting
line at $a$.

First we record a moment comparison for the supporting line argument.
Suppose $q\ge1$ and $0<Y\le Z$ on $(0,1)$, with $Z$ decreasing,
$Y/Z$ increasing and both functions having finite $q$-th moments.
The following argument shows that
\begin{equation}\label{eq:momentquotientorder}
 \frac{M_q(Y)}{M_{q-1}(Y)}
 \le\frac{M_q(Z)}{M_{q-1}(Z)}.
\end{equation}
Put $r=Y/Z$, so that $0<r\le1$ and $r^{q-1}$ is increasing.
The pointwise bound $Y^q\le Y^{q-1}Z$ gives the first inequality
below. The second follows from the reversed weighted Chebyshev
integral inequality, applied to the increasing function $r^{q-1}$
and the decreasing function $Z$, with weight $Z^{q-1}$:
\[
 \frac{M_q(Y)}{M_{q-1}(Y)}
 \le\frac{\int_0^1 r^{q-1}Z^q\,dt}
          {\int_0^1 r^{q-1}Z^{q-1}\,dt}
 \le\frac{M_q(Z)}{M_{q-1}(Z)}.
\]
When $q=1$, the second inequality is equality.
If $Y<Z$ on a set of positive measure, then the first inequality
is strict.

Choose a supporting line at $a$ for the concave function $F$, satisfying 
\[
 L(t)=c+\sigma(t-a)\ge F(t),\qquad
 d_0=L(0)=c-\sigma a\ge0,\quad d_1=L(1)=c+\sigma b\ge0.
\]
Let $\overline P_a,\overline Q_a$ be obtained by replacing $F$ with $L$ in the planar formulas \eqref{eq:planarprofiles}. The following chord argument shows that $F/L$ is
increasing on $(0,a)$ and decreasing on $(a,1)$.
For the first assertion, take
$0<s<t<a$ and write $t=\theta s+(1-\theta)a$. Then by concavity and affineness, 
\[
 \frac{F(t)}{L(t)}
 \ge\frac{\theta F(s)+(1-\theta)c}
          {\theta L(s)+(1-\theta)c}
 \ge\frac{F(s)}{L(s)} ,
\]
where the second inequality can be checked by subtraction. 
The decreasing assertion on $(a,1)$ follows by the same chord argument.
Therefore both
$F(as)/L(as)$ and $F(1-bs)/L(1-bs)$ are increasing with respect to $0<s<1$. 
Their weighted averages over $(t,1)$ with respect to weights $L(as)/s$ and $L(1-bs)/s$ are 
\[
 \frac{P_a(t)}{\overline P_a(t)},\qquad
 \frac{Q_a(t)}{\overline Q_a(t)} ,
\]
and hence those ratios are increasing in $t$. 
These ratios lie in $(0,1]$ since $0<F \leq L$, and clearly ${\overline P_a(t)}$ and ${\overline Q_a(t)}$ are decreasing.
We apply \eqref{eq:momentquotientorder} with $(Y,Z)=(P_a,{\overline P_a})$ and with $(Y,Z)=(Q_a,{\overline Q_a})$. Subsequently using
\eqref{eq:logrderivative} for $F$ and also for $L$ (which is valid since the derivation of that formula does not require zero
endpoint values) gives that 
\begin{equation}\label{eq:tangentlogcomparison}
 (\log R_F)'(a)\ge(\log R_L)'(a).
\end{equation}
In this formula the chosen line $L$ (which depends on the value $a$) is held fixed when differentiating
$R_L$. The supporting line is chosen separately at each $a$; we never differentiate that choice. 

The comparison in \eqref{eq:tangentlogcomparison} is actually strict, because a nonzero $F$ with zero endpoint values
cannot agree everywhere with an affine function. Thus $F<L$ on an
interval on at least one side of $a$, making at least one of the profile inequalities strict on a
set of positive measure. 

Next we show $(\log R_L)'(a) \geq 0$. For this remaining part of the argument, we ignore how $L$ was constructed in terms of $F$, and simply take $d_0$ and $d_1$ to be nonnegative constants that are not both zero and work with the affine function $L(t) = d_0(1-t) + d_1 t$. The parameter $a \in (0,1)$ is arbitrary. Put
\[
c = L(a) =d_0(1-a)+d_1 a 
\]
and define functions
\[
 h(t)=1-t,\qquad k(t)=-\log t-(1-t)\ge0.
\]
Direct integration gives
\[
 \overline P_a=ch+d_0 k,\qquad
 \overline Q_a=ch+d_1 k .
\]
For $c>0$ and $d\ge0$, set
\[
 N(c,d)=\left(\int_0^1(ch+dk)^q\,dt\right)^{\!1/q}.
\]
We now claim the key property, that 
\begin{equation}\label{eq:logsubmodular}
 \partial_c\partial_d\log N(c,d)\le0,
\end{equation}
which holds by direct computation followed by Cauchy--Schwarz, when $d>0$, and extends to $d=0$ by continuity. 
The differentiations are justified by the
endpoint bounds $h\sim1-t$ and $k\sim(1-t)^2/2$ near $t=1$,
and by the at-most logarithmic growth near $t=0$. 

For the line $L$, we have
$R_L(a)=[N(c,d_0)/N(c,d_1)]^q$ and $c'=d_1-d_0$.
By \eqref{eq:logsubmodular}, $\partial_c\log N(c,d)$ is
decreasing in $d$. Hence the two factors in the following
derivative have the same sign, giving
\begin{equation*}
 (\log R_L)'(a)
 =q(d_1-d_0)
 \bigl[\partial_c\log N(c,d_0)
       -\partial_c\log N(c,d_1)\bigr]\ge0.
\end{equation*}
The strictness of \eqref{eq:tangentlogcomparison} then gives $(\log R_F)'(a)>0$.

Now we justify the earlier $a$-differentiations and
integrations by parts for continuous $F$. On compact subintervals of
$0<a<1$, the displayed derivatives are uniformly bounded,
while
\[
 0<P_a(t),Q_a(t)\le\norm{F}_\infty\log(1/t).
\]
Every finite nonnegative power of this upper bound is integrable,
and the integration by parts boundary terms vanish. Dominated
convergence gives $P_a\to0$ in $L^q$ as $a\downarrow0$, while $Q_a$
converges to a nonzero function. Reflection gives the opposite behavior
as $a\uparrow1$. Hence $R_F(a)$ strictly increases from $0$ to
$+\infty$, and is continuous.

Finally take $q=p-1$. The derivative identity
\eqref{eq:Jpderivative}, which is valid for every $p>1$, becomes
\[
 J_p'(a)
 =\frac{p}{4^p}F(a)M_q(Q_a)\bigl(R_F(a)-1\bigr).
\]
The strictly increasing ratio $R_F(a)$ crosses $1$ exactly once, and so $J_p(a)$ strictly decreases
and then strictly increases. Continuity at $a=0,1$ follows from
the logarithmic bound above. This argument completes the proof of the lemma.
\end{proof}

\subsubsection*{Completing the proof for $1<p<\infty, n=2$}

Lemmas~\ref{lem:lpendpoint} and \ref{lem:largelp} supply the endpoint
comparison for $J_{2,p}$ when $1<p<\infty$, including a strictness
assertion. The reduction \eqref{eq:dimensionpbound}--\eqref{eq:comparisonendpoints} then completes the proof of the inequality in Theorem~\ref{thm:planarlp} for these $p$-values, when $n=2$.

\subsection{Dimension $n=1$}

The one-dimensional argument is shorter because the radial weights are absent from $V_a$ and $W_b$. In fact, when $n=1$, definitions \eqref{eq:Vanhigher} and \eqref{eq:Wbnhigher} give  
\[
V_a(t) = \int_t^a F(s) \, ds , \qquad W_b(t) = \int_t^b \widetilde{F}(s) \, ds ,
\]
for $t \in [0,a]$ and $t \in [0,b]$, respectively. In our normalized setting, we have $|\Omega|=\omega_1=2$, 
$B=(-1,1)$ and $v(x)=V_1(|x|), w(x)=W_1(|x|)$. 

We first sketch a direct one-dimensional proof of Talenti's
comparison on a nodal set, since the proof is simple and informative.
Let $E_\tau=\{u_+>\tau\}$ and $\mu(\tau)=|E_\tau|$. At a regular
positive level $\tau$, integration of $-u''=f$ over the component
intervals of $E_\tau$, followed by Cauchy--Schwarz and the
one-dimensional isoperimetric inequality $P(E_\tau)\ge2$, gives
\[
 4\le
 \left(\int_{E_\tau}f\dd\right)(-\mu'(\tau))
 \le 2F\left(\frac{\mu(\tau)}2\right)(-\mu'(\tau)),
\]
where the final inequality holds by the measurable set
Hardy--Littlewood inequality \eqref{eq:top} with $\omega_1=2$.

Put $t=\mu(\tau)/2$ and invert the distribution function.
For the decreasing profile $U_+$ of $u_+$ this yields
\[
 -U_+'(t)\le F(t),
 \qquad
 U_+(t)\le\int_t^\alpha F(s)\,ds=V_\alpha(t).
\]
The same argument applied to $u_-$ and $-f$ gives
$U_-(t)\le W_\beta(t)$.

Next, the endpoint inequality is particularly transparent in one dimension.

\begin{lemma}[One-dimensional endpoint maximum]
\label{lem:oneendpoint}
For $1\le p<\infty$, let $J_p=J_{1,p}$ be the functional
from \eqref{eq:Jnp}. Then
\[
 J_p(a)\le\max\{J_p(0),J_p(1)\}.
\]
If $F\not\equiv0$ then the inequality is strict for $0<a<1$.
\end{lemma}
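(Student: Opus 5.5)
The plan is to differentiate $J_p$ in $a$ exactly as in the planar case, and then show that the sign of the derivative changes at most once, from negative to positive. For $n=1$ we have $V_a(t)=\int_t^aF(s)\,ds$ and $W_{1-a}(t)=\int_t^{1-a}F(1-s)\,ds$. Substituting $t=as$ and $t=(1-a)s$ puts everything on a common interval:
\[
 J_p(a)=a\int_0^1\Bigl(\int_s^1 aF(a\sigma)\,d\sigma\Bigr)^p ds+(1-a)\int_0^1\Bigl(\int_s^1(1-a)F(1-(1-a)\sigma)\,d\sigma\Bigr)^p ds .
\]
A cleaner route is to differentiate the original form directly. Since $V_a(t)=\int_t^aF$, we get $\partial_aV_a(t)=F(a)$ for $t<a$, and likewise $\partial_a W_{1-a}(t)=-F(a)$ for $t<1-a$, because $\widetilde F(1-a)=F(a)$. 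The boundary terms vanish because $V_a(a)=W_{1-a}(1-a)=0$. Hence
\[
 J_p'(a)=pF(a)\Bigl[\int_0^aV_a(t)^{p-1}\,dt-\int_0^{1-a}W_{1-a}(t)^{p-1}\,dt\Bigr]=:pF(a)\,D(a).
\]
Everything is bounded here (for $n=1$ we have $V_a\le\|F\|_\infty$), so the differentiation is routine and $J_p$ is continuous on $[0,1]$. For $p=1$ the bracket becomes $a-(1-a)=2a-1$, which already settles that case directly.

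The key step is to show that $D$ is strictly increasing on $(0,1)$ when $F\not\equiv0$ (recall $F>0$ on $(0,1)$). The first term $\int_0^aV_a^{p-1}\,dt$ is nondecreasing in $a$, and strictly increasing for $p\ge1$. The reason is that increasing $a$ enlarges the domain of integration and also increases the integrand pointwise, since $\partial_aV_a=F(a)>0$. For the same reasons, $\int_0^{1-a}W_{1-a}^{p-1}\,dt$ is decreasing in $a$: its domain shrinks and its integrand decreases. So $D$ is strictly increasing. This monotonicity is exactly what fails in higher dimensions because of the weights $s^{2/n-2}$; here it is elementary. Note that no concavity beyond $F>0$ is even needed. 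At the endpoints, $D(0)=-\int_0^1W_1^{p-1}<0$ and $D(1)=\int_0^1V_1^{p-1}>0$.

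To conclude, $D$ has a single sign change at some $c\in(0,1)$, so $J_p'$ is negative on $(0,c)$ and positive on $(c,1)$. Thus $J_p$ strictly decreases and then strictly increases, which gives $J_p(a)<\max\{J_p(0),J_p(1)\}$ for $0<a<1$. The case $F\equiv0$ is trivial. The only point that needs care is $1\le p<2$, where $V_a^{p-1}$ carries a nonpositive power of the exponent $p-1$. This is harmless: $p-1\ge0$, the integrands are bounded, and the $p=1$ convention $V^0=\mathbf 1_{[0,a)}$ matches the computation $2a-1$ above. I therefore do not expect a real obstacle, only a check that differentiation under the integral is justified at $p=1$, which the direct computation handles.
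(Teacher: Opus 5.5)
Your proposal is correct and follows essentially the paper's proof. Both use the same identity $J_p'(a)=pF(a)D_p(a)$ (reducing to $(2a-1)F(a)$ when $p=1$), strict monotonicity of $D_p$, and the endpoint signs $D_p(0)<0<D_p(1)$; the only difference is that you get monotonicity of $D_p$ directly from the enlarging domain and the pointwise increase $\partial_aV_a=F(a)>0$, whereas the paper differentiates $D_p$ and must check that the $M_{p-2}$ moments are finite, which your route avoids.
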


\begin{proof}
If $F\equiv0$, then $J_p\equiv0$ and the conclusion is immediate.
Assume $F\not\equiv0$, so that $F(a)>0$ for $0<a<1$. For $n=1$, the definitions give
\[
 V_a(t) = \int_t^a F(s) \,ds , \qquad  W_{1-a}(t) = \int_t^{1-a} \widetilde{F}(s) \,ds ,
\]
for $t \in [0,a]$ and $t \in [0,1-a]$ respectively. Continuity of $J_p(a)$ follows easily.

For $p=1$, differentiation gives
\[
 J_1'(a)=aF(a)-(1-a)\widetilde F(1-a)
        =(2a-1)F(a).
\]
Hence $J_1$ strictly decreases on $(0,1/2)$ and strictly increases
on $(1/2,1)$. Every interior value is therefore smaller than the
larger endpoint value.

Now let $1<p<\infty$. We use the profiles $P_a,Q_a$ from \eqref{eq:rescaledprofiles} with $n=1$ and the moments $M_r$ from \eqref{eq:profilemoments}. The profile formulas become
\[
 \begin{aligned}
 P_a(t)&=V_a(at)=\int_{at}^aF(s)\,ds,\\
 Q_a(t)&=W_{1-a}((1-a)t)
       =\int_{(1-a)t}^{1-a}\widetilde F(s)\,ds.
 \end{aligned}
\]
Writing $D_p=D_{1,p}$ as in \eqref{eq:momentdifference}, we have
\[
 \begin{aligned}
 D_p(a)&=aM_{p-1}(P_a)-(1-a)M_{p-1}(Q_a)\\
       &=\int_0^aV_a(t)^{p-1}\,dt
        -\int_0^{1-a}W_{1-a}(t)^{p-1}\,dt.
 \end{aligned}
\]
The factors $a$ and $1-a$ come from the changes of variable in
these two integrals. Since $\partial_aV_a(t)=F(a)$ and
$\partial_bW_b(t)=\widetilde F(b)$, with
$\widetilde F(1-a)=F(a)$, differentiation gives
\begin{equation}\label{eq:oneJderivative}
 J_p'(a)=pF(a)D_p(a).
\end{equation}
The difference $D_p$ is strictly increasing, as the following
calculation shows:
\begin{equation}\label{eq:oneDderivative}
 D_p'(a)=(p-1)F(a)
 \left[aM_{p-2}(P_a)+(1-a)M_{p-2}(Q_a)\right]>0,
 \qquad 0<a<1.
\end{equation}
To justify the differentiations, note that the profiles vanish linearly at
their moving endpoints, with $V_a(a)=0, V_a^\prime(a)=-F(a)<0$ and similarly for $W_{1-a}$. Thus the $M_{p-2}$ moments in \eqref{eq:oneDderivative} are
finite. 

The function $D_p$ is continuous on $[0,1]$, since the profiles depend continuously on $a$ and the exponent $p-1$ is positive, and 
at the endpoints it takes values 
\[
 D_p(0)=-\int_0^1W_1(t)^{p-1}\,dt<0,
 \qquad
 D_p(1)=\int_0^1V_1(t)^{p-1}\,dt>0.
\]
Consequently $D_p$ has a unique zero $c\in(0,1)$. By
\eqref{eq:oneJderivative}, $J_p$ strictly decreases on $(0,c)$ and
strictly increases on $(c,1)$. Every interior value is therefore
smaller than the larger endpoint value.
\end{proof}

For $n=1$, Lemma~\ref{lem:oneendpoint} and the reduction
\eqref{eq:dimensionpbound}--\eqref{eq:comparisonendpoints} prove
Theorem~\ref{thm:planarlp} for every finite $p$, including
$p=1$. The case $p=\infty$ follows
from Theorem~\ref{thm:linfty}, as remarked earlier.

\subsection*{Equality in Theorem~\ref{thm:planarlp} for finite $p$}

Assume $1 \leq p < \infty$, $f\not\equiv0$, and equality holds in
\eqref{eq:lpquestion}. If $n=2$ and $p>1$, or if $n=1$, then strictness
of the endpoint inequality as proved above forces
$\alpha=0$ or $\alpha=1$. The first possibility forces $\beta=1$, since 
otherwise the strict increase of $W_b$ with $b$ would give
$\norm{u}_{L^p}<\norm{w}_{L^p}$. Equality in the nodal comparison
then gives $U_-=W_1$ or $U_+=V_1$, respectively.
Lemma~\ref{lem:nodalrigidity} proves the claimed domain and solution
identities.

Now let $(n,p)=(2,1)$. Equality in the calculation in
Section~\ref{sec:planarlone}, together with $F>0$ on $(0,1)$, forces
$\alpha+\beta=1$ and saturation of each nonempty nodal comparison.
Since $u$ is continuous in $\Omega$, Lemma~\ref{lem:nodalrigidity}
shows that each nonempty nodal set is a disk with a set of zero
$H^1$-capacity possibly removed. If only one nodal set is nonempty,
its volume fraction is $1$, and the same lemma gives the single-disk
domain and solution identities.

If both nodal sets are nonempty, let $D_+$ and $D_-$ be their
supporting disks. These disks are disjoint, and $\alpha+\beta=1$
implies that $\Omega$ agrees almost everywhere with their union.
Openness then forces $\Omega$ to be contained in the union. Any missing set has zero $H^1$-capacity, by the nodal set conclusion
of Lemma~\ref{lem:nodalrigidity}. 
On the two disks, $u$ has the translated profiles $V_\alpha$ and
$-W_\beta$. The identities
\[
 V_\alpha(t)=V_1(t)-V_1(\alpha),\qquad 0\le t\le\alpha,
 \qquad
 W_\beta(t)=W_1(t)-W_1(\beta),\qquad 0\le t\le\beta,
\]
show that these are precisely the truncated comparison potentials,
with their boundary values subtracted as stated in the theorem.

The $p=\infty$ equality case is proved in Section~\ref{sec:linfty}.

\subsection*{Counterexamples when $p<\infty$ to paired comparison with nonzero source mean}

The following family of examples shows why mean zero is needed for each 
$L^p$ comparison with $p<\infty$. The larger interval in the example carries mean-zero source,
while a small second interval changes the total mean. Sufficiently small values of the parameter give
counterexamples; the required smallness depends on $p$. 

\begin{proposition}[Paired comparison failure without mean zero]
\label{prop:finitep-arbitrarymean}
If $1\le p<\infty$ and $L>0$, then there exist a bounded open set
$\Omega\subset\R$ consisting of two intervals with $|\Omega|=L$
and a source $f$ on $\Omega$ taking only the values $1$ and $-2$, with nonzero
mean, such that
\[
 \norm{u}_{L^p(\Omega)}
 >\max\{\norm{v}_{L^p(B)},\norm{w}_{L^p(B)}\}
\]
where $B=(-L/2,L/2)$. 
\end{proposition}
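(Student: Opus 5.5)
We take $\Omega=I_1\cup I_2$, where $I_1$ is a long interval carrying a mean-zero two-valued source and $I_2$ is a short interval of length $\varepsilon$ carrying a constant source. We then expand the three $L^p$ norms in $\varepsilon$. The point is that $u$ on $I_1$ does not see $I_2$ at all, whereas the rearranged sources on $B$ mix the two pieces. Concretely, set $|I_1|=L-\varepsilon$ and $|I_2|=\varepsilon$. On $I_1$, placed as a centered interval, let $f=1$ on the concentric subinterval of length $\tfrac23(L-\varepsilon)$ and $f=-2$ on the two end pieces, so that $\int_{I_1}f=0$. On $I_2$ let $f$ be a constant, $1$ or $-2$, with the sign chosen later, so the total mean is $\pm\varepsilon/L$ or $\mp 2\varepsilon/L$ and hence nonzero. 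On $I_1$ the source is already symmetric decreasing, so $u|_{I_1}$ is the rescaled ball solution $v_0$ for $f_0=f|_{I_1}$, taken on the interval of length $L-\varepsilon$. On $I_2$ the solution is an explicit parabola with $\int_{I_2}|u|^p=O(\varepsilon^{2p+1})$. Thus $\|u\|_p^p=\|v_0\|_{L^p(I_1^\ast)}^p+O(\varepsilon^{2p+1})$, and the dependence on $\varepsilon$ through the length of $I_1$ is smooth and explicit by scaling ($u$ scales like length squared).

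Next I would write $v$ and $w$ through the one-dimensional integrated rearrangement, using the $n=1$ formulas of Section~\ref{sec:lp} (now allowing $F(1)=m\ne0$, with $v$ obtained by integrating $F$ from the boundary and correcting for the flux). The rearranged source $f^\ast$ on $B=(-L/2,L/2)$ is again two-valued. Its only change from the $\varepsilon=0$ configuration is that the central $\{f^\ast=1\}$ region, or the outer $\{f^\ast=-2\}$ region, gains measure $\varepsilon$. So $F_\varepsilon$ is an explicit piecewise linear function, $v_\varepsilon$ and $w_\varepsilon$ are explicit piecewise quadratics, and their $L^p$ norms are differentiable in $\varepsilon$. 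At $\varepsilon=0$ we have $\|u\|=\|v_0\|$ exactly. We also compute $\|w_0\|$, where $(-f_0)^\ast=2$ on a central third and $-1$ outside, and compare it with $\|v_0\|$.

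The decisive step, and the main obstacle, is to show that the first variations satisfy
\[
\frac{d}{d\varepsilon}\Bigl(\|v_\varepsilon\|_p^p-\|u_\varepsilon\|_p^p\Bigr)\Big|_{\varepsilon=0}<0
\]
for the chosen sign of $f|_{I_2}$, together with either $\|w_0\|_p<\|v_0\|_p$, or else a negative first variation for $w$ as well. The rescaling of $I_1$ contributes the same first-order term to $u$ and to $v$. What remains is the effect of inserting extra mass $\varepsilon$ of value $1$ (respectively $-2$) into the rearranged profile. This is a signed integral of $V_1^{p-1}$ against $\delta V_1(t)=\int_t^1\delta F$, and I would compute it explicitly as a function of $p$.

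If both the ordering of $\|v_0\|,\|w_0\|$ and the sign of the variation do not come out favorably for the first configuration, I would try two variants. The first replaces the base proportions by a general $f_0=\mathbf 1_{\text{center}}-\kappa\mathbf 1_{\text{ends}}$ before specializing to the values $1,-2$. The second reverses the roles, taking the base with $u=-w_0$ and $I_2$ carrying the other value. Between these choices one sign should win.

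Once the strict first-order inequalities hold, choosing $\varepsilon$ small enough, depending on $p$ because the $O(\varepsilon^{2p+1})$ and second-order remainders depend on $p$, gives $\|u\|_p>\max\{\|v\|_p,\|w\|_p\}$. A final dilation adjusts the total length to the prescribed $L$, since all three norms scale identically.
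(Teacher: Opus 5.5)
Your construction is essentially the paper's: a centered interval carrying $1$ on its middle two thirds and $-2$ on its ends, so that $u_0=v_0$, plus a short interval with constant source. Your handling of $w$ also matches the paper, via the strict gap $w_0<v_0$ away from the origin and continuity. The gap is in the step you call decisive: there is no strictly negative first variation to find.

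\begin{itemize}
\item \emph{The sign $f=1$ on $I_2$ loses outright.} The set $\{f^\ast_\varepsilon=1\}$ grows, so $f^\ast_\varepsilon\ge f^\ast_0$ and the maximum principle gives $v_\varepsilon\ge v_0\ge0$. Meanwhile
\[
\|u_\varepsilon\|_p^p=(1-\varepsilon/L)^{2p+1}\|v_0\|_p^p+O(\varepsilon^{2p+1})<\|v_0\|_p^p
\]
for small $\varepsilon$, so $\|u_\varepsilon\|_p<\|v_\varepsilon\|_p$.
\item \emph{The sign $f=-2$ on $I_2$ gives zero first variation.} The new mass sits at the bottom of the rearranged profile. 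There $F_0(1)=0$ forces $v_0'=0$ on $\partial B$, so the first variation of $\|v_\varepsilon\|_p^p-\|u_\varepsilon\|_p^p$ is exactly $0$.
\item \emph{Your fallbacks do not help.} The values $1,-2$ with mean zero on $I_1$ force $\kappa=2$. The reversed base $u_0=-w_0$ has the same $f_0^\ast$, so it fails already at $\varepsilon=0$: there $\|u_0\|_p=\|w_0\|_p<\|v_0\|_p$.
\end{itemize}

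What is missing is the second-order term, and with $f=-2$ on $I_2$ (negative mean) it can be computed exactly. Normalize $L=2$ and take $I_1=(-R,R)$ with $R=1-\delta$, and $I_2=(c-\delta,c+\delta)$. On $I_1$ write $u_\delta=R^2U(|x|/R)$, where $U(t)=\tfrac13-\tfrac{t^2}{2}$ for $t\le\tfrac23$ and $U(t)=(1-t)^2$ for $\tfrac23\le t\le1$.

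The rearranged source is $f^\ast_\delta=1$ on $|x|<2R/3$ and $-2$ on the rest of $B$. Integrating twice gives:
\begin{itemize}
\item $v_\delta=u_\delta-\delta^2$ on $(-R,R)$;
\item $v_\delta=(|x|-R)^2-\delta^2$ on $R<|x|<1$.
\end{itemize}
The two outer pieces together have the same $L^p$ integral as the solution $(x-c)^2-\delta^2$ on $I_2$. So those contributions cancel identically, and
\[
 \|u_\delta\|_p^p-\|v_\delta\|_p^p
 =\int_{-R}^{R}\bigl(u_\delta^p-|u_\delta-\delta^2|^p\bigr)\dd
 =2p\,\delta^2R^{2p-1}\int_0^1U(t)^{p-1}\,dt+o(\delta^2)>0 .
\]
This is the paper's argument.

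Your $O(\varepsilon^{2p+1})$ bookkeeping and your treatment of $w$ survive unchanged. But the favorable sign lives at order $\varepsilon^2$, so a proof built on a strict first-order inequality cannot close. You need either the exact identity above or an expansion of $v_\varepsilon$ to second order with a check of that coefficient's sign.
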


\begin{proof}
Fix $p$ and rescale in order to normalize the length to $L=2$. For $0 \leq \delta<1$, let 
\[
 R=1-\delta,\qquad
 \Omega_\delta=(-R,R)\cup(3-\delta,3+\delta),\qquad B=(-1,1),
\]
and
\[
 f_\delta(x)=
 \begin{cases}
 1,&|x|<2R/3,\\
 -2,&\text{elsewhere in }\Omega_\delta.
 \end{cases}
\]
Then $\int_{\Omega_\delta}f_\delta(x)\,dx=-4\delta$, which is negative when $\delta>0$.

On the larger interval $(-R,R)$ of $\Omega_\delta$, the solution of $-u_\delta^{\prime\prime}=f_\delta$ is $R^2U(|x|/R)$, where
\[
 U(t)=
 \begin{cases}
 \displaystyle\frac13-\frac{t^2}{2},&0\le t\le2/3,\\[1mm]
 (1-t)^2,&2/3\le t\le1.
 \end{cases}
\]
On the smaller interval of $\Omega_\delta$ the solution is $u_\delta(x)=(x-3)^2-\delta^2$. Since $f^*$ equals $1$ on $|x|<2R/3$ and $-2$ on $2R/3<|x|<1$, direct calculation gives that 
\[
 \norm{u_\delta}_p^p-\norm{v_\delta}_p^p
 =2\delta^2 R^{2p-1} p \int_0^1 U(t)^{p-1} \,dt + o(\delta^2).
\]
Hence $\norm{u_\delta}_p>\norm{v_\delta}_p$
for all small $\delta>0$, noting that the required smallness depends on $p$. 

The comparison with $w$ is easier to determine, since one computes that $u_0>w_0 \geq 0$ pointwise except at $x=0$, and so $\norm{u_\delta}_p>\norm{w_\delta}_p$ for all small $\delta$. 
\end{proof}

\section{Remarks --- paired comparison method is nonsymmetric, and fails in three dimensions}
\label{sec:dimensionfailure}

Three features of the planar $L^2$ comparison deserve emphasis.
The $J$-functional can be nonsymmetric, and yet its endpoints still
dominate in dimension two. In dimension three, the functional can instead rise
above both endpoints, and furthermore we find an example for which Theorem~\ref{thm:planarlp} actually fails. The third subsection below identifies the dimensionally dependent weight that blocks the planar proof from extending to $n \geq 3$.

\subsection{Paired comparison can be nonsymmetric}
\label{sec:single-source-nonsymmetric}

The planar endpoint principle does not rely on 
symmetry of the $J$-functional. 

Consider disjoint disks with centers $c_+$ and $c_-$ sufficiently far apart, and set
\[
 \Omega=B_{\sqrt{2/3}}(c_+)\cup B_{\sqrt{1/3}}(c_-),
 \qquad
 f(x)=\begin{cases}
 3,&x\in B_{1/\sqrt3}(c_+),\\
 -3/2,&\text{elsewhere in }\Omega.
 \end{cases}
\]
The source has mean zero, $|\Omega|=\pi$ and $B=B_1$. Its integrated rearrangement
is the nonsymmetric piecewise linear (tent) function
\[
 F(t)=\min\{3t,\tfrac32(1-t)\}.
\]
Here $J=J_{2,2}[F]$.
The Dirichlet solution of $-\Delta u = f$ on $\Omega$ is
\[
 u(x)=\begin{cases}
 V_{2/3}(|x-c_+|^2),&x\in B_{\sqrt{2/3}}(c_+),\\
 -W_{1/3}(|x-c_-|^2),&x\in B_{\sqrt{1/3}}(c_-),
 \end{cases}
\]
which is positive on the larger disk and negative on the smaller one.
Consequently $\norm{u}_2^2=\pi J(2/3)$, while
$\norm{v}_2^2=\pi J(1)$ and $\norm{w}_2^2=\pi J(0)$.
With the help of \eqref{eq:tonelli} we calculate
\[
 J(2/3)=\frac{23}{384}-\frac{3}{64}\log2 \approx 0.0274,
 \qquad
 \norm{w}_2\approx0.280<\norm{u}_2\approx0.293
 <\norm{v}_2\approx0.316.
\]
Thus $w$ has smaller $L^2$ norm than $u$, which in turn has smaller norm than $v$. This ordering is shown in the left panel of
Figure~\ref{fig:endpointfunctionals}, along with the lack of symmetry of the graph of $J$.

A second example shows that the preferred endpoint can change with the
exponent. On the unit disk, take
\[
 F(t)=F_{1/4}(t)+\frac12F_{9/10}(t),\qquad
 F_\theta(t)=
 \min\left\{\frac{t}{\theta},\frac{1-t}{1-\theta}\right\},
\]
noting that $F$ is concave with $F(0)=F(1)=0$. For the corresponding radial solutions, direct calculation gives
\[
 \norm{v}_2\approx0.454,\qquad
 \norm{w}_2\approx0.443,
 \qquad
 \norm{v}_\infty\approx0.594,\qquad
 \norm{w}_\infty\approx0.607.
\]
Thus $v$ has the larger $L^2$ norm, whereas $w$ has the larger
$L^\infty$ norm and hence the larger $L^p$ norm for all large $p$.

\subsection{Paired comparison can fail in three dimensions}

The right panel of
Figure~\ref{fig:endpointfunctionals} exhibits a dramatic failure of the endpoint comparison result in dimension three. Take
$0<\varepsilon<1/2$ and consider the admissible integrated rearrangements
\begin{equation}\label{eq:Fepsilon}
 F_\varepsilon(t)=
 \min\left\{\frac{t}{\varepsilon},\,1,\,
                  \frac{1-t}{\varepsilon}\right\}.
\end{equation}
Each $ F_\varepsilon$ is continuous and concave, and vanishes at both endpoints. It is, up to a constant factor, the integrated rearrangement of the concentrated opposite sources in the three-dimensional construction 
 of Section~\ref{sec:highercounterexamples} below. 

For the concrete choice $\varepsilon=1/8$, corresponding to
$q=1/2$ in Section~\ref{sec:highercounterexamples}, direct calculation gives the graph of $J(a)=J_{3,2}[F_{1/8}](a)$ shown in the right panel of Figure~\ref{fig:endpointfunctionals}. The graph rises well above both endpoint values in its central portion, although it does dip slightly below them just inside the endpoints.

\begin{figure}[!htb]
 \centering
 \includegraphics[width=0.98\linewidth]{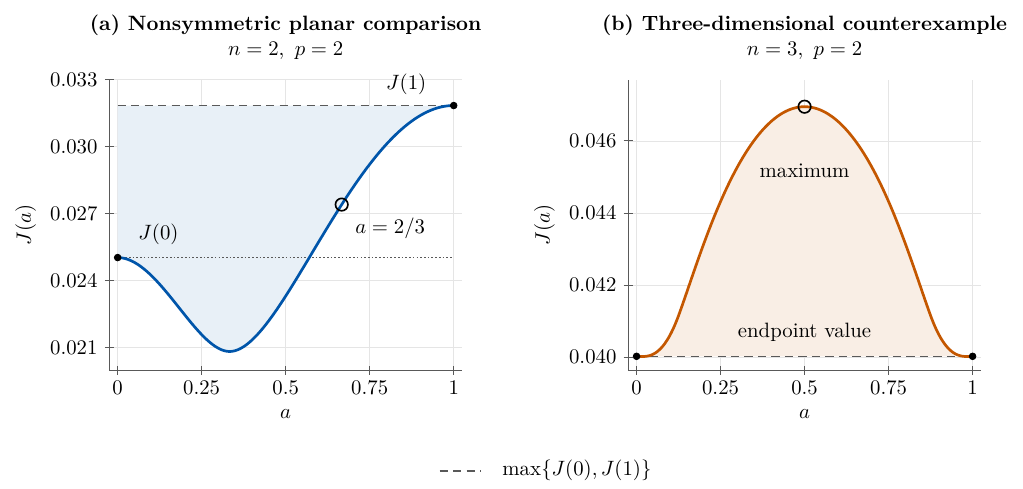}
 \caption{The $J$-functional in dimensions two and three:  $J(a)=J_{2,2}[F](a)$ for the piecewise linear integrated rearrangement $F(t)=\min\{3t,\tfrac32(1-t)\}$ in the left panel and
 $J(a)=J_{3,2}[F_{1/8}](a)$ in the right panel. The vertical scales differ. The dashed line is
 $\max\{J(0),J(1)\}$.
\emph{Left:} the planar example in
 Section~\ref{sec:single-source-nonsymmetric} produces a nonsymmetric
 graph $J(a)$. The marked value $J(2/3)=\norm{u}_2^2/\pi$ exceeds
 $J(0)=\norm{w}_2^2/\pi$ (the dotted line) but remains below
 $J(1)=\norm{v}_2^2/\pi$ (the dashed line). Thus the maximum in
 Theorem~\ref{thm:planarlp} cannot be replaced by the minimum.
 \emph{Right:} the three-dimensional integrated rearrangement
 $F_\varepsilon$ from \eqref{eq:Fepsilon} with
 $\varepsilon=1/8$ generates a symmetric graph $J(a)$ whose central
 values exceed both endpoint values; shallow dips occur near the
 endpoints. 
 }
 \label{fig:endpointfunctionals}
\end{figure}

\subsection{Why the proof of Theorem~\ref{thm:planarlp} breaks down in higher dimensions}

The signed Talenti comparison step remains valid in higher dimensions.
The step that fails is the endpoint inequality on the $J$-functional. As we shall show, the proof acquires a dimensionally dependent radial weight that is no longer controlled by concavity, when $n \geq 3$.

When $p=2$, differentiating the definition of $J_{n,2}$ and using the formulas \eqref{eq:Vanhigher} and \eqref{eq:Wbnhigher} for $V_a$ and $W_b$ gives that 
\begin{equation*}
 J_{n,2}'(a)=\frac{2F(a)}{n^4}\Bigg[
 a^{2/n-2}\int_0^a F(s)s^{2/n-1}\,ds 
 -(1-a)^{2/n-2}
 \int_a^1F(s)(1-s)^{2/n-1}\,ds 
 \Bigg] .
\end{equation*}
When $n=2$, the integrand weights disappear since the exponent $2/n-1$ equals $0$, and so the two integrals can be related through the total value $\int_0^1 F(s) \, ds$. But in dimensions $n \geq 3$, the integrands carry different singular weights and so cannot be related. The effect is that concavity of $F$ no longer forces
the sign pattern needed for the $J$-functional to have an endpoint maximum.

\section{Proof of Theorem~\ref{thm:linfty} --- \texorpdfstring{$L^\infty$}{L-infinity} comparison in all dimensions}
\label{sec:linfty}

\subsection*{One dimension with arbitrary mean}
\label{sec:linftyone}

In one dimension, an exact computation with the Green function permits us to treat arbitrary means, as follows.

Take $n=1$. By rescaling, we may take $|\Omega|=1$ and $B=(-1/2,1/2)$.
Use $m,f^\downarrow,F,\widetilde F$ from Section~\ref{sec:notation}.
Since $|\Omega|=1$, we have $m=\int_\Omega f\dd$ and
$f^*(x)=f^\downarrow(2|x|)$ for $|x|<1/2$.
Replacing $(f,u,v,w)$ by $(-f,-u,w,v)$ if necessary, we may suppose $m\ge0$. Set
\[
 A=\int_0^1 F(s)\,ds.
\]
The function $F$ is concave, with $F(0)=0$ and $F(1)=m$,
and hence $F(s)\ge ms\ge0$. The solution of $-v^{\prime\prime}=f^*$ is 
\[
 v(x)=\frac14\int_{2|x|}^1 F(s)\,ds,\qquad
 \norm v_\infty=v(0)=\frac A4.
\]
For the reflected source, recall that $\widetilde F(s)=F(1-s)-m$. 

Consider a component interval $I$ of length $\ell$. We might as well suppose $I=(0,\ell)$. On that interval the solution is 
\[
u(x) = \int_0^\ell G(x,y) f(y) \, dy , \qquad x \in (0,\ell) ,
\]
where the Dirichlet Green function is 
\[
G(x,y) 
= 
\begin{cases}
(1-x/\ell)y , & y \in (0,x] , \\
(1-y/\ell)x , & y \in [x,\ell) . 
\end{cases}
\]
Note that $G(x,y) \leq h(x) := (1-x/\ell)x$. By the layer cake representation, 
\begin{align*}
u(x) 
& = \int_0^{h(x)} \int_{\{ y : G(x,y) > z \}} f(y) \, dy dz \\
& \leq \int_0^{h(x)} F\big((1-z/h(x))\ell\big) \, dz 
\end{align*}
since the length of the superlevel set $\{ G(x,\cdot)>y \}$ depends linearly on $y$ and so equals $(1-y/h(x))\ell$. Changing variable and using that $h(x)/\ell \leq 1/4$, we find 
\[
u(x) \leq \frac{h(x)}{\ell} \int_0^\ell F(z) \, dz \leq \frac{1}{4} \int_0^\ell F(z) \, dz \leq \frac{1}{4} A
\]
since $F \geq 0$. Similarly, if $u(x) \leq 0$ then 
\[
0 \leq -u(x) \leq \frac{h(x)}{\ell} \int_0^\ell \widetilde F(z) \, dz \leq \frac{1}{4} \int_0^\ell \widetilde F(z) \, dz = \frac14\left(\int_{1-\ell}^1 F(s)\,ds-m\ell\right)
 \leq \frac A4 ,
\]
and the final inequality is strict when $m>0$. Consequently $\norm u_\infty\le\norm v_\infty$ for $m\ge0$. Further, applying this bound with $u$ replaced by $-w$, whose source \(-(-f)^*\) has nonnegative mean and decreasing rearrangement \(f^*\), gives \(\|w\|_\infty\le\|v\|_\infty\). Thus the $L^\infty$ norm of $v$ is greater than or equal to the norms of $u$ and $w$. 

When $m\le0$, the initial sign reversal gives the corresponding bounds with the roles of $v$ and $w$ interchanged. 

For the equality conclusion, suppose the common norm is positive.
The continuous zero extension of $u\in H^1_0(\Omega)$ has compact
support, and so $|u|$ attains its maximum at a point of $\Omega$.
Also $F>0$ on $(0,1)$, since $f\not\equiv0$ (recall $u$ has positive norm, in the equality statement) and $m\ge0$.
If $m>0$ then the argument above shows that the maximizing value is positive, and if $m=0$ then we may suppose the maximum is positive by replacing $u$ with $-u$ if necessary.
Equality in the bound $\norm u_\infty\le\norm v_\infty$ now forces $\ell=1$ and
$h(x) = \ell/4$. Thus $\Omega$ is one interval and $x$ is its midpoint.
At this point, equality in the rearrangement estimate forces
$f=f^*$ after translation: every centered Green superlevel interval
must contain source values of maximal integral, and the Green
function is strictly decreasing in distance from the center point. Hence $u=v$. Undoing any sign reversal gives $u=v$ or $u=-w$,
as required.

\subsection*{Mean-zero sources in all dimensions}

Now assume $\int_\Omega f\dd=0$ and $n \geq 1$. The positive and negative peaks will be
controlled separately, with Talenti nodal comparison
giving the bound in every dimension.

With notation from Section~\ref{sec:notation}, we have
\[
 \norm v_\infty=V_1(0),\qquad
 \norm w_\infty=W_1(0),
\]
where these limiting values at $t=0$ may be infinite. Since $U_+ \le V_\alpha$ and $U_- \le W_\beta$ by Lemma \ref{lem:level}, and $V_a$ and $W_b$ are increasing with respect to $a$ and $b$ respectively, we find 
\[
 \norm{u_+}_\infty\le V_\alpha(0)
 \le V_1(0)=\norm v_\infty
\]
and similarly
\[
 \norm{u_-}_\infty\le W_\beta(0)
 \le W_1(0)=\norm w_\infty.
\]
Taking the larger of the two
inequalities proves the $L^\infty$ bound in Theorem~\ref{thm:linfty}.

Suppose next that equality holds with finite positive common value
$M$ for $\norm{u}_{L^\infty(\Omega)}$ and the other norm. Then $f\not\equiv0$ and both comparison potentials are bounded.
After reversing the signs of $u$ and $f$ if needed, we may suppose
$\norm{u_+}_\infty=M$. Since $F>0$ on $(0,1)$, the preceding bounds
force $\alpha=1$ and $U_+(0)=V_1(0)=M$.

To pass from equality of the heights to equality of the profiles,
adapt Kesavan's distribution argument
\cite[Proposition~3.2.1, Step~2, pp.~53--54]{Kesavan2006}
to the signed source by using the Hardy--Littlewood bound
\eqref{eq:top} in the positive level estimate. This gives the
differential comparison
\[
 0\le-U_+'(t)\le-V_1'(t),\qquad 0<t<1,
\]
almost everywhere. The source enters this argument only through its positive
integrated rearrangement $F$, and pointwise nonnegativity of $f$ is not required.

The rearrangement $(u_+)^*$ belongs to $H^1_0(B)$, and so its radial
profile $U_+$ is absolutely continuous on $[\varepsilon,1]$
for every $0<\varepsilon<1$, with $U_+(1)=0$.
Integrating and letting $\varepsilon\downarrow0$, we obtain
\[
 \int_0^1(-U_+'(t))\,dt
 =U_+(0)=M=V_1(0)
 =\int_0^1(-V_1'(t))\,dt.
\]
The differential comparison and equality of these finite
integrals force $U_+'=V_1'$ almost everywhere.
The zero terminal values then give $U_+=V_1$.
Lemma~\ref{lem:nodalrigidity} now gives the equality conclusion
in Theorem~\ref{thm:linfty}.

Finally, suppose $f\in L^\ell(\Omega)$ for some finite
$\ell\ge1$ with $\ell>n/2$. H\"older's inequality and the zero
total integral give
\[
 |F(s)|\le\norm{f^\downarrow}_{L^\ell(0,1)}
 \min\{s^{1-1/\ell},(1-s)^{1-1/\ell}\}.
\]
Near zero, the integrands defining $V_1(0)$ and
$W_1(0)$ are bounded by a constant times
$s^{2/n-1-1/\ell}$, which is integrable because $\ell>n/2$.
Thus $v,w$ are bounded, and the comparison also bounds $u$.

\subsection*{Proof of $2^{1/p}$ comparison as remarked after Theorem~\ref{thm:linfty}}

Fix $1\le p<\infty$ and assume $\int_\Omega f\dd=0$.
Normalize $|\Omega|=\omega_n$, as in
Section~\ref{sec:notation}. The nodal comparison result (Lemma \ref{lem:level})
gives $U_+(t)\le V_\alpha(t)$ and $U_-(t)\le W_\beta(t)$.
Because the integrated rearrangement $F$ is nonnegative, the
integral formulas for the truncated potentials give
\[
 V_\alpha(t)\le V_1(t),\quad 0\le t\le\alpha,
 \qquad
 W_\beta(t)\le W_1(t),\quad 0\le t\le\beta.
\]
Thus, with the norm integrals understood in the extended sense, possibly equalling $+\infty$, we have 
\begin{align*}
 \int_\Omega |u|^p\dd
 &=\omega_n\int_0^\alpha U_+(t)^p\,dt
   +\omega_n\int_0^\beta U_-(t)^p\,dt\\
 &\le\omega_n\int_0^\alpha V_1(t)^p\,dt
      +\omega_n\int_0^\beta W_1(t)^p\,dt\\
 &\le\int_Bv^p\dd+\int_Bw^p\dd.
\end{align*}
Taking $p$-th roots gives the estimate \eqref{eq:2p} after Theorem~\ref{thm:linfty}.

\section{Proof of Theorem~\ref{thm:bangbang} --- bang-bang comparison}
\label{sec:bangbangproof}

When the source $f$ is bang-bang, the function $J_{n,p}(a)$ becomes helpfully explicit. 
We show that its maximum occurs at an endpoint of $[0,1]$ and then identify the larger endpoint.

Fix $n \geq 3$ and suppose $p$ is finite, with $(3n-1)/4\le p<\infty$. Normalize
$|\Omega|=\omega_n$ and define $\theta$ to be the volume fraction of the set $E$, so that
\[
 \theta=\frac{|E|}{\omega_n}
        =\frac{B_0}{A_0+B_0}\in(0,1),
\]
where we evaluated $\theta$ using the assumption that $f$ has mean value zero. 
Dividing the source and its potentials by $A_0\theta=B_0(1-\theta)$,
we may assume without loss of generality that the integrated rearrangement is
\[
 F_\theta(t)=
 \min\left\{\frac{t}{\theta},\frac{1-t}{1-\theta}\right\}.
\]
For fixed $\theta$, write $F=F_\theta$ and $J_p=J_{n,p}[F_\theta]$ as in \eqref{eq:Jnp}. The reduction
\eqref{eq:dimensionpbound}--\eqref{eq:comparisonendpoints}
shows that it suffices to prove
\begin{equation}\label{eq:bangbangendpoint}
J_p(a)\le\max\{J_p(0),J_p(1)\},
 \qquad 0\le a\le1.
\end{equation}
We will show the inequality is strict when $0<a<1$. 

Put $b=1-a$ and recall the rescaled profiles from \eqref{eq:rescaledprofiles}:
\[
 P_a(t)=n^2V_a(at),\qquad
 Q_a(t)=n^2W_b(bt),\qquad 0<t<1.
\]
Using the moment notation $M_r$ from \eqref{eq:profilemoments}, we have
\[
 J_p(a)=\frac1{n^{2p}}\left[aM_p(P_a)+bM_p(Q_a)\right].
\]
The linear growth bounds $F(t)\le t/\theta$ and
$\widetilde F(t)\le t/(1-\theta)$ make $V_a$ and $W_b$ 
bounded and give continuity of $J_p(a)$ on $[0,1]$.
For interior $a$, the linear vanishing of $V_a$ and $W_b$ at the moving endpoints,
together with $p\ge(3n-1)/4\ge2$, justifies the moment differentiations below.
Write $D_p=D_{n,p}$ for the moment difference in \eqref{eq:momentdifference}.
Differentiation like in the proof of Lemma \ref{lem:largelp} gives
\[
 J_p'(a)=\frac{pF(a)}{n^{2p}}D_p(a).
\]
Reflection replaces $(\theta,a)$ by $(1-\theta,1-a)$ and exchanges
the roles of $V_a$ and $W_b$. It therefore suffices to analyze 
\[
\theta\le a\le1
\]
for arbitrary $\theta$.
In particular, this choice implies $b\le1-\theta$, leading to the following exact expression when $0\le t\le b$: 
\begin{equation}\label{eq:bangbangtorsion}
 W_b(t)=\frac{b^{2/n}-t^{2/n}}{2n(1-\theta)}.
\end{equation}
Also, $V_a$ is convex and decreasing
as a function of $t^{2/n}$, since  
\[
 \frac{dV_a}{d(t^{2/n})}=-\frac{F(t)}{2nt}
\]
and $F(t)/t$ is decreasing by concavity of $F$.

\smallskip
\noindent{\bf Case 1: $\theta\le1/2$.} We will show $D_p(a) \geq 0$, so that $J_p(a)$ is increasing on $[\theta,1]$.
For $\theta\le a<1$, with $a$ fixed, normalize $P_a$ further by setting
\[
 X_{a,\theta}(x)=\frac{2(1-\theta)}{n b^{2/n}}P_a(x)
 =\frac{2(1-\theta)}{n b^{2/n}}
   \int_{ax}^a F_\theta(t)t^{2/n-2}\,dt.
\]
In the following comparison, $a$ is held fixed while $\theta$ varies. 
Then $D_p(a)\ge0$ is equivalent to
\begin{equation}\label{eq:bangbangmomentcomparison}
 \frac{\int_0^1X_{a,\theta}(x)^{p-1}\,dx}
      {\int_0^1(1-x^{2/n})^{p-1}\,dx}
 \ge\left(\frac ab\right)^{\!1-2/n}.
\end{equation}
Since $X_{a,\theta}(1)=0$ and the mapping
$x^{2/n}\mapsto X_{a,\theta}(x)$ is convex, it follows that
$x\mapsto X_{a,\theta}(x)/(1-x^{2/n})$ is decreasing, with
terminal value $(b/a)^{1-2/n}$. If $\theta\le a\le1/2$, this value is at
least $ 1$ implying \eqref{eq:bangbangmomentcomparison}. Consequently, we consider $1/2\le a<1$, so that $b/a \leq 1$. A direct comparison gives $X_{a,\theta}\ge X_{a,1/2}$, when $a,b$
are held fixed, since $(1-\theta)/\theta \geq 1$. Put  
\[
 \eta=
 \frac{2\left((1+b/a)/2\right)^{\!2/n}
             -1-(2/n)b/a}{1-2/n}\le 1.
\]
One computes that 
\[
 X_{a,1/2}(x)=(a/b)^{2/n}\bigl(\eta-x^{2/n}\bigr),
 \qquad 0\le x\le\frac1{2a}.
\]
As a function of $x^{2/n}$, $X_{a,1/2}$ is convex and agrees with
the displayed affine function when $ax\le1/2$. Consequently it
lies above that line for $ax>1/2$, and thus so must $X_{a,\theta}$, so that 
\[
 X_{a,\theta}(x)\ge
 \left(\frac ab\right)^{\!2/n}(\eta-x^{2/n})_+,
 \qquad 0\le x\le1.
\]
Hence
\[
\begin{aligned}
\int_0^1X_{a,\theta}(x)^{p-1}\,dx
 &\ge (a/b)^{2(p-1)/n}\eta^{p-1+n/2}
       \int_0^1(1-x^{2/n})^{p-1}\,dx.
\end{aligned}
\]
The elementary inequality $\eta\ge(b/a)^{2/(5n)}$ gives
\begin{equation*}
 \begin{aligned}
 \frac{\int_0^1X_{a,\theta}(x)^{p-1}\,dx}
      {\int_0^1(1-x^{2/n})^{p-1}\,dx}
 &\ge (a/b)^{2(p-1)/n}\eta^{p-1+n/2}\\
 &\ge (a/b)^{[8(p-1)/n-1]/5}\\
 &\ge\left(\frac ab\right)^{\!1-2/n},
 \end{aligned}
\end{equation*}
where the last step uses $a/b\ge1$ and $p\ge(3n-1)/4$.
Hence \eqref{eq:bangbangmomentcomparison} holds and so $D_p(a) \geq 0$.

In fact $D_p(a)>0$ for $a \in (\theta,1)$, as the following three cases show.
If $a<1/2$, the terminal value $(b/a)^{1-2/n}$ is greater than $1$,
whereas the right side of \eqref{eq:bangbangmomentcomparison} is less
than $1$. If $a>1/2$, the affine continuation for $X_{a,1/2}$ is
strictly below $X_{a,1/2}$ on an interval of positive length.
Since $X_{a,1/2}>0$ for $x<1$, replacing that affine continuation
by its positive part still gives a strict integral inequality. 
When $\theta<a=1/2$, the comparison
$X_{a,\theta}\ge X_{a,1/2}$ is strict on the interval $(0,1)$.
Consequently $J_p(a)<J_p(1)$ for $\theta\le a<1$ in Case~1.

\smallskip
\noindent{\bf Case 2: $\theta\ge1/2$.}
Still with $\theta \leq a < 1$, we prove that every critical point of $J_p(a)$ in $(\theta,1)$ is a
strict local minimum.
Put $\nu(y)=V_a(a y^{n/2})$ for $0 \leq y \leq 1$. Convexity with respect to $y$ implies 
$-y\nu'(y)\le \nu(0)-\nu(y)$. Since $\nu(1)=0$, integration by parts
gives
\[
 \frac n2\int_0^1y^{n/2-1}\nu^{p-1}\,dy
 =(p-1)\int_0^1y^{n/2}\nu^{p-2}(-\nu')\,dy
 \le(p-1)\int_0^1y^{n/2-1}\nu^{p-2}(\nu(0)-\nu)\,dy.
\]
Rearranging, and using $F(t)\le t/\theta$ to bound $\nu(0)$, yields
\begin{equation}\label{eq:bangbangmoment}
 \frac{M_{p-1}(P_a)}{M_{p-2}(P_a)}
 \le\frac{n^2(p-1)\nu(0)}{p-1+n/2},
 \qquad
\nu(0)\le\frac{a^{2/n}}{2n\theta}.
\end{equation}

At an interior critical point of $J_p$, we have $D_p(a)=0$, so that
 $
 a^{2/n-1}M_{p-1}(P_a)=b^{2/n-1}M_{p-1}(Q_a)
$.
Differentiating the moment difference in \eqref{eq:momentdifference} and using the explicit formula \eqref{eq:bangbangtorsion} gives
\[
 \frac{D_p'(a)}{a^{2/n-1}M_{p-1}(P_a)}
 =\frac{(p-1)b a^{2/n-2}}{1-\theta}
      \frac{M_{p-2}(P_a)}{M_{p-1}(P_a)}
  +\frac{2/n-2}{a}+\frac{2p/n-1}{b}.
\]
Since $\theta\ge1/2$ and $b=1-a$, we have
\[
 a+\frac{\theta b^2}{(1-\theta)a}
 \ge a+\frac{b^2}{a}
 =2a-2+\frac1a\ge2\sqrt2-2.
\]
Also, $p\ge(3n-1)/4$ gives
$1+2(p-1)/n\ge(5/2)(1-1/n)$.
Consequently, \eqref{eq:bangbangmoment} yields
\begin{equation*}
 \begin{aligned}
 \frac{abD_p'(a)}{a^{2/n-1}M_{p-1}(P_a)}
 &\ge\left(1+\frac{2(p-1)}n\right)
       \left(a+\frac{\theta b^2}{(1-\theta)a}\right)
       +\frac2n-2\\
 &\ge\left(1-\frac1n\right)(5\sqrt2-7)
       \ge\frac{10\sqrt2-14}{3}>0.
 \end{aligned}
\end{equation*}
The last bound uses $n\ge3$.
Thus every critical point in $(\theta,1)$ is a strict local minimum, and so the maximum of $J_p(a)$ on $[\theta,1]$ occurs at $\theta$ or $1$.

\smallskip
\noindent{\bf Conclusion.}
If $\theta\le1/2$ then Case 1 gives $J_p(\theta) < J_p(1)$,
while the reflection of Case 2 shows that the maximum of $J_p$ on $[0,\theta]$ is attained only at an endpoint $0$ or $\theta$. Hence the maximum on $[0,1]$ occurs only at $0$ or $1$.
If $\theta\ge1/2$ then the reflection of Case 1 shows that $J_p(\theta) < J_p(0)$, while 
Case 2 shows that the maximum on $[\theta,1]$ occurs only at an endpoint $\theta$ or $1$, so that again the maximum on $[0,1]$
occurs only at $0$ or $1$. 
{
Together, these conclusions prove \eqref{eq:bangbangendpoint}, with strict inequality holding when $0<a<1$.
}
 
\subsubsection*{Which endpoint gives the larger norm?} 
If $\theta=1/2$, reflection gives $V_1=W_1$ and so the endpoints give the same value. In this case $v=w$. 

Suppose $\theta<1/2$. We establish an ordering by the following
crossing argument. Put
\[
 H(t)=F(t)-F(1-t),\qquad Z(t)=V_1(t)-W_1(t).
\]
The function $H$ is positive on $(0,1/2)$ and antisymmetric about
$1/2$, while
\[
 Z'(t)=-\frac1{n^2}H(t)t^{2/n-2},\qquad Z(1)=0.
\]
Thus $Z(t)<0$ for $1/2\le t<1$. Pairing $t$ with $1-t$ gives
\[
 Z(0)=\frac1{n^2}\int_0^{1/2}H(t)
      \left(t^{2/n-2}-(1-t)^{2/n-2}\right) dt>0.
\]
Since $Z$ is strictly decreasing on $(0,1/2)$, it crosses zero
exactly once, at some $t_0<1/2$. Next, a second paired integration gives
\[
 \int_0^1Z(t)\,dt
 =\frac1{n^2}\int_0^{1/2}H(t)
      \left(t^{2/n-1}-(1-t)^{2/n-1}\right) dt>0.
\]
Let $c=V_1(t_0)=W_1(t_0)>0$. Both profiles are decreasing.
Before the crossing, both profiles lie above $c$; after the crossing,
both lie below $c$ and their difference is negative.
The integral form of the derivative of $z \mapsto z^p$ therefore yields
\[
 V_1(t)^p-W_1(t)^p\ge pc^{p-1}(V_1(t)-W_1(t)) = pc^{p-1}Z(t)
\]
on both sides of the crossing. Integration therefore proves
\begin{equation*}
 \int_0^1V_1(t)^p\,dt - \int_0^1W_1(t)^p\,dt > 0,
\end{equation*}
so that $v$ has strictly larger $L^p$ norm than $w$. For $\theta>1/2$, reflection reverses the ordering.

Lastly, suppose $p=\infty$. Theorem~\ref{thm:linfty} gives the norm inequality. For ordering, argue as above: the argument when $\theta<1/2$ says $V_1(0)-W_1(0)=Z(0)>0$, so that $v$ has larger $L^\infty$ norm than $w$. 

\subsection*{Equality in Theorem~\ref{thm:bangbang}}

Suppose equality holds in the theorem. For $(3n-1)/4 \leq p < \infty$, the strict endpoint bound \eqref{eq:bangbangendpoint} for $0<a<1$ forces one nodal fraction to be $1$. Equality in the norm comparison then saturates the
corresponding nodal profile, and Lemma~\ref{lem:nodalrigidity} gives
the ball and solution identities. For $p=\infty$, the source is
bounded and nonzero, and so the equality statement of
Theorem~\ref{thm:linfty} applies. The strict endpoint ordering proved above for
$\theta\ne1/2$ selects $u=v$ when $\theta<1/2$ and $u=-w$ when
$\theta>1/2$; for $\theta=1/2$ the comparison potentials coincide.

Conversely, the stated domain and solution identities give equality in the theorem, since the selected comparison potential
has the larger norm by the endpoint ordering proved above.

\section{\texorpdfstring{$L^p$}{Lp} counterexamples in higher dimensions}
\label{sec:highercounterexamples}

The following construction produces a family of 
counterexamples to the paired comparison in every dimension $n\ge3$ and for every finite
exponent $1\le p<\infty$. The examples depend on $p$. Each fixed member of the family eventually
satisfies the comparison as $p$ grows to infinity, and so there is no contradiction with the $L^\infty$ theorem.

Even a single ball can provide a counterexample, as we demonstrate at the end of the section for $n=3$ dimensions with $p=2$. 

\subsection*{Constructing the \texorpdfstring{$L^p$}{Lp} counterexamples}

Fix $n\ge3$ and $1\le p<\infty$. Put $R=2^{-1/n}$, choose
$c_\pm=\pm2e_1$, and define
\[
 \Omega=B_R(c_+)\cup B_R(c_-),\qquad B=B_1,
\]
so that $|\Omega|=|B|$. For $0<q\le R$, define a source $f_q$ on $\Omega$ by
\[
 f_q=\mathbf1_{B_q(c_+)}-\mathbf1_{B_q(c_-)}.
\]
This source has integral zero. The rearrangements of $\pm f_q$ coincide
and have
\[
 f_q^\ast(r)=(-f_q)^\ast(r)=
 \begin{cases}
 1,&0\le r<q,\\
 0,&q<r<r_q,\\
 -1,&r_q<r<1,
 \end{cases}
\]
where $r_q=(1-q^n)^{1/n}$. Let $u$ be the potential of $f_q$
on $\Omega$, and let $v=w$ be the potential of $f_q^\ast=(-f_q)^\ast$ on $B_1$, with Dirichlet boundary conditions. Figure~\ref{fig:concentratedprofiles} displays the source and
potentials for one representative member of this family.

\begin{figure}[htbp]
 \centering
 \includegraphics[width=0.98\linewidth]{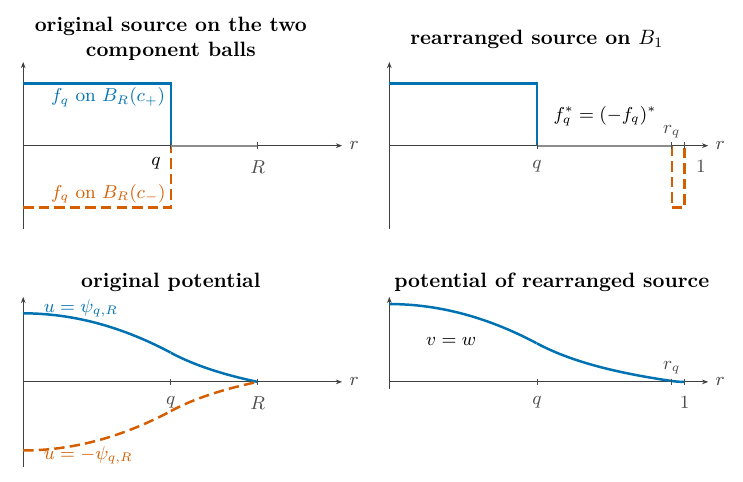}
 \caption{Source and potentials for the construction in
 Section~\ref{sec:highercounterexamples}, with $n=3$ and $q=1/2$.
 Here $R=2^{-1/3}$. The source $f_q$ is a positive central indicator
 on $B_R(c_+)$ and the opposite indicator on $B_R(c_-)$; correspondingly,
 $u$ consists of two opposite copies of $\psi_{q,R}$. The horizontal
 axes use the radial variable $r$; the original potentials end at
 $r=R$, while the potential for the rearranged source ends at $r=1$.}
 \label{fig:concentratedprofiles}
\end{figure}

For $s\ge q$, let $\psi_{q,s}$ solve
\[
 -\Delta\psi_{q,s}=\mathbf1_{B_q}
 \quad\text{in }B_s,\qquad
 \psi_{q,s}=0\quad\text{on }\partial B_s .
\]
Explicitly, 
\[
 \psi_{q,s}(r)
 =q^2 \left[ \Psi \! \left(\frac rq\right)
 -\Psi \! \left(\frac sq\right) \right],
 \qquad 0\le r\le s,
\]
where
\[
 \Psi(\rho)=
 \begin{cases}
 \displaystyle\frac1{2(n-2)}-\frac{\rho^2}{2n},
     &0\le\rho\le1,\\[5pt]
 \displaystyle\frac{\rho^{2-n}}{n(n-2)},
     &\rho\ge1,
 \end{cases}
\]
satisfies $-\Delta \Psi = \mathbf1_{B_1}$ in $\R^n$ and $\Psi(\rho) \to 0$ as $\rho \to \infty$. For our fixed exponent $p$, set
\[
 I_s(q)=\int_{B_s}\psi_{q,s}^{\,p}\dd.
\]
The solution $u$ is a translate of $\psi_{q,R}$ on one component and
its negative on the other, and so
\[
 \int_\Omega |u|^p\dd=2I_R(q).
\]
The nonnegativity of $v$ follows from Lemma~\ref{lem:comparison},
while the maximum principle gives $v\le\psi_{q,1}$. Hence
\[
 \int_{B_1}v^p\dd\le I_1(q).
\]
Thus to obtain a counterexample to the paired comparison inequality it
suffices to prove for some $q$ that $2I_R(q)>I_1(q)$, because then the $L^p$ norms of $v$ and $w$ are smaller than the norm of $u$. 

Integration of $\psi_{q,s}$ gives
the following three regimes as $q \downarrow 0$:
\[
 I_s(q)\sim
 \begin{cases}
 Cs^{\,n-(n-2)p}q^{np},&1\le p<p_c,\\
 Cq^{np}\log(1/q),&p=p_c,\\
 Cq^{n+2p},&p>p_c,
 \end{cases}
\]
where $p_c=n/(n-2)$ and the positive constants are independent of $s$ but depend on $n$ and $p$. Consequently,
\[
 \lim_{q\downarrow0}\frac{2I_R(q)}{I_1(q)}
 =
 \begin{cases}
 2^{p(n-2)/n},&1\le p<p_c,\\
 2,&p\ge p_c.
 \end{cases}
\]
In each case the limit is greater than $1$. 
Hence for every sufficiently small $q>0$,
\[
 \int_\Omega |u|^p\dd
 =2I_R(q)>I_1(q)
 \ge\int_{B_1}v^p\dd ,
\]
so that the paired comparison fails.

\subsection*{Why these counterexamples do not extend to $p=\infty$}

The construction fixes $p$ before choosing $q$. For each fixed $q>0$,
the solutions are bounded and $u$ changes sign, and so the equality case
of Theorem~\ref{thm:linfty} gives strict comparison at $p=\infty$.
Convergence of the $L^p$ norms then gives comparison for all large $p$.

\subsection*{Counterexample on a ball}

The paired comparison inequality can fail even when the domain is a
ball, as the following example shows for $L^2$ in three dimensions: two opposite concentrated
sources can give a larger $L^2$ norm than the solution of the rearranged problem.

Let $\Omega=B_1$ be the unit ball in three dimensions. For $0<q<1/2$, put
\[
 f_q=\mathbf1_{B_q(e_1/2)}
       -\mathbf1_{B_q(-e_1/2)}.
\]
The source has integral zero. Let $u_q$ be its Dirichlet
potential, and let $v_q,w_q$ be the Dirichlet potentials of
$f_q^*$ and $(-f_q)^*$ on $B_1$. The two rearranged sources
coincide, and so $v_q=w_q$.

Write $G(x,y)$ for the Dirichlet Green function of $B_1$, and set
\[
 H(x)=G(x,e_1/2)-G(x,-e_1/2).
\]
Then
\[
 \frac{u_q}{|B_q|}\longrightarrow H
 \quad\text{in }L^2(B_1)
 \qquad \text{as } q\downarrow0 .
\]

The rearranged source equals $1$ on $B_q$, equals $-1$ on the
outer shell of volume $|B_q|$, and vanishes elsewhere.
Its cumulative radial integral is nonnegative, and so the
radial solution formula and the maximum principle give
\[
 0\le v_q=w_q\le\psi_{q,1} 
\]
where $\psi_{q,1}$ is the Dirichlet potential of
$\mathbf1_{B_q}$ in the unit ball. Observe that
\[
 \frac{\psi_{q,1}}{|B_q|}\longrightarrow G(\,\cdot\,,0)
 \quad\text{in }L^2(B_1).
\]

Paired comparison fails for all sufficiently small $q$, since $H^2$ has larger integral than $G(\,\cdot\,,0)^2$:
\[
\int_{B_1}H(x)^2\,dx \approx 0.035 > 0.027 \approx \int_{B_1}G(x,0)^2\,dx .
\]

\section{Clamped counterexamples, and \texorpdfstring{$L^2$}{L2} conjecture in three dimensions}
\label{sec:clampedcounterexamples}

For a smooth bounded open set and
$u\in H^2(\Omega)\cap H^1_0(\Omega)$, the mean-zero assumption on
$f=-\Delta u$ has the boundary flux interpretation
\[
 \int_{\partial\Omega}\partial_\nu u\,dS
 =\int_\Omega\Delta u\dd=-\int_\Omega f\dd=0.
\]
Thus, under these regularity assumptions, the normal derivative has
mean zero on the boundary. Requiring that derivative to vanish
pointwise (the clamped boundary condition) still does not rescue the
comparison in dimensions $n\ge4$, as the following construction shows.
We work on smooth balls and arrange that all three
solutions satisfy both zero Dirichlet and zero normal derivative
conditions. The limiting ratio below captures the fact that the correction needed
to clamp the boundary remains bounded while the concentrated source
potential grows in $L^2$.

\subsection*{Clamped solutions and concentration limit}

For $n\ge2$, keep $R=2^{-1/n}$ and
$\Omega=B_R(c_+)\cup B_R(c_-)$ from
Section~\ref{sec:highercounterexamples}. Thus $B=B_1$. For
$0<\epsilon<1/4$, set $q=\epsilon^{1/n}$ and define
\begin{equation}\label{eq:clampeddata}
 f_\epsilon(x)=
 \begin{cases}
 \displaystyle\epsilon^{-1}\mathbf1_{B_q(c_+)}(x)-2,
      &x\in B_R(c_+),\\[3pt]
 \displaystyle-\epsilon^{-1}\mathbf1_{B_q(c_-)}(x)+2,
      &x\in B_R(c_-).
 \end{cases}
\end{equation}
The factor $\epsilon^{-1}$ makes the concentration explicit: the
small central ball has volume $\omega_n\epsilon$, while the
concentrated term carries total mass $\omega_n$. The integral of
$f_\epsilon$ on each component is zero. Let $u_\epsilon,v_\epsilon,
w_\epsilon$ denote the Dirichlet solutions for $f_\epsilon$,
$f_\epsilon^\ast$, and $(-f_\epsilon)^\ast$, respectively.

The sources in \eqref{eq:clampeddata} are radial on their
respective balls and have integral zero on each component. The radial flux
identity therefore gives
\[
 \partial_\nu u_\epsilon=0\qquad\text{on }\partial\Omega.
\]
Together with the Dirichlet condition and elliptic regularity on the
balls, this argument shows that $u_\epsilon\in H^2_0(\Omega)$. The two signed
source distributions agree, and so $v_\epsilon=w_\epsilon$. Their common
radial source also has integral zero; the same flux identity at the
boundary of $B_1$ gives
\[
 \partial_\nu v_\epsilon=0\qquad\text{on }\partial B_1.
\]
Hence $v_\epsilon\in H^2_0(B_1)$ as well.

The original solution $u_\epsilon$ has two concentrated peaks,
whereas the rearranged solution $v_\epsilon$ has only one. For $n\ge4$,
the concentration estimates in Section~\ref{sec:highercounterexamples} give directly
\begin{equation*}
 \lim_{\epsilon\downarrow0}
 \frac{\int_\Omega u_\epsilon^2\dd}
 {\int_{B_1}v_\epsilon^2\dd}=2,
 \qquad n\ge4.
\end{equation*}
Thus every $n\ge4$ admits a strict counterexample to the clamped
$L^2$ paired comparison for all sufficiently small positive
$\epsilon$. All three solutions are clamped, and the fixed open set
$\Omega$ has smooth boundary and two components.

\subsection*{Three-dimensional clamped comparison conjecture}

The concentration argument above produces clamped counterexamples
in dimensions $n\ge4$ but leaves dimension three unresolved.

\begin{conjecture}[Clamped $L^2$ comparison in dimension three]
Let $\Omega\subset\R^3$ be a nonempty bounded open set, and let
$B=B_R(0)$ be the ball of the same volume.
For any real-valued
$u\in H^2_0(\Omega)$, set $f=-\Delta u\in L^2(\Omega)$.
Let $v,w\in H^1_0(B)$ be the weak Dirichlet solutions of
\[
 -\Delta v=f^\ast,\qquad -\Delta w=(-f)^\ast\quad\text{in }B,
\]
where the rearrangements are signed and distribution preserving.
Then
\begin{equation*}
 \int_\Omega u^2\dd\le
 \max\left\{\int_Bv^2\dd,\int_Bw^2\dd\right\}.
\end{equation*}
\end{conjecture}

The mean-zero condition is automatic here since $\int_\Omega f\dd=0$ by integrating the Laplacian of $u \in H^2_0(\Omega)$. Hence the radial
solutions $v,w$ are also clamped, because their sources have zero
integral.

\section{Proof of Theorem~\ref{thm:gradientone} --- one-dimensional gradient comparison}
\label{sec:gradientoneproof}

The derivative follows a particularly simple rearrangement structure in
one dimension. At each level of the solution, we compare the moments
of the boundary slopes. A sharp elementary inequality for those
moments works for exponents $1 \leq p \leq 3$, after which it fails.
The integrated rearrangement $F$ has a nonzero
endpoint value when the mean is nonzero. Keeping this endpoint value
in the integrated rearrangement of $-f$ leads to the comparator selected in the theorem.

Spatial scaling reduces the proof to $|\Omega|=1$ and
$B=(-1/2,1/2)$. Use $m,f^\downarrow,F,\widetilde F$ from Section~\ref{sec:notation}.
Since $|\Omega|=1$, we have $m=\int_\Omega f\dd$ and $f^\ast(x)=f^\downarrow(2|x|)$ on $B$.
It suffices to treat $m\ge0$, since replacing $(f,u,v,w)$ by
$(-f,-u,w,v)$ gives the other case. Concavity gives $F(t)\ge mt\ge0$ on $[0,1]$.
Define
\[
 \alpha=|\{u>0\}|,\qquad \beta=|\{u<0\}|,
 \qquad \alpha+\beta\le1.
\]
With these choices, the Hardy--Littlewood
inequality reads $\int_E f\dd\le F(|E|)$ for measurable sets 
$E\subset\Omega$.

We work directly with the given source $f\in L^2(\Omega)$.
Since $u''=-f$, the solution is $C^1$ on each interval component,
and its zero extension is continuous. Thus every regular positive
level set is a compact subset of $\Omega$ consisting of finitely
many points. The one-dimensional coarea formula shows that the
critical values have measure zero, and so the level set calculations
below are valid.

For a regular value $0<\tau<\operatorname*{ess\,sup}u_+$, write
$E_\tau=\{u_+>\tau\}$, $\mu(\tau)=|E_\tau|$, and let
$\delta_1,\ldots,\delta_N$ be the positive numbers $|u'|$ at the
boundary points of $E_\tau$. Here $N\ge2$. For real $r$, put
\[
 S_r = S_r(\tau) =\sum_{j=1}^N\delta_j^r.
\]
Then 
\begin{equation*}
 S_1=\int_{E_\tau}f\dd,
 \qquad
 S_{-1}=-\mu'(\tau).
\end{equation*}
The Hardy--Littlewood inequality gives, with $t=\mu(\tau)$,
\begin{equation}\label{eq:gradientslopeHL}
 S_1\le F(t).
\end{equation}

The following moment estimate is the fulcrum of the proof:
\begin{equation}\label{eq:slopemoment}
 2^pS_{p-1}\le S_1^pS_{-1},
 \qquad 1\le p\le3.
\end{equation}
To verify it, observe that the function
$p\mapsto\log S_{p-1}$ is convex (by differentiating twice and expressing the result as a variance), whereas the logarithm of the
right side of \eqref{eq:slopemoment} is affine in $p$. Therefore it suffices to check the inequality at $p=1,3$. For $p=1$, Cauchy--Schwarz gives
\[
 S_1S_{-1}\ge N^2\ge2N = 2S_0 .
\]
For $p=3$, if $N\ge3$ then Cauchy--Schwarz and the elementary inequality $S_1^2 \geq S_2$ yield that 
\[
 S_1^3S_{-1}\ge N^2S_1^2\ge9S_1^2\ge8S_2.
\]
If $N=2$, the same inequality follows from the exact identity
\[
 S_1^3S_{-1}-8S_2
 =\frac{(\delta_1-\delta_2)^4}{\delta_1\delta_2}\ge0.
\]
This calculation proves \eqref{eq:slopemoment}.

Now apply the one-dimensional coarea formula to the gradient
integral, getting
\[
 \int_{\{u>0\}}|u'|^p\dd
 =\int_0^\infty S_{p-1}(\tau)\,d\tau.
\]
Using \eqref{eq:slopemoment}, \eqref{eq:gradientslopeHL}
and $S_{-1}(\tau)=-\mu'(\tau)$ gives
\begin{align*}
 \int_{\{u>0\}}|u'|^p\dd
 &\le \frac1{2^p}\int_0^\infty
       S_1(\tau)^pS_{-1}(\tau)\,d\tau\\
 &\le \frac1{2^p}\int_0^\infty
       F\bigl(\mu(\tau)\bigr)^{\!p}
       (-\mu'(\tau))\,d\tau\\
 &\le \frac1{2^p}\int_0^\alpha F(t)^p\,dt,
\end{align*}
where the last step is by the monotone change of variables
$t=\mu(\tau)$, with $\mu(0^+)=\alpha$.
The formula is valid even if $\mu$ has jumps or a singular part, since \(-\mu'(\tau)\,d\tau\) accounts only for the absolutely continuous part of the decrease of \(\mu\), and the integrand is nonnegative.

For $-u$ and $-f$, the same level set calculation uses the integrated rearrangement $\widetilde F(t)=F(1-t)-m$ from Section~\ref{sec:notation} in place of $F$. Although $\widetilde F$ can be negative,
at every regular nonempty level the sum of the boundary slopes is
positive and bounded above by $\widetilde F(t)$.
Thus we may use the nonnegative function
$\widetilde F_+(t)=\max\{\widetilde F(t),0\}$ in the monotone change
of variables, obtaining
\[
 \int_{\{u<0\}}|u'|^p\dd
 \le\frac1{2^p}\int_0^\beta \widetilde F_+(t)^p\,dt.
\]
Because $m\ge0$ and $F\ge0$, we have
$(F(t)-m)_+\le F(t)$. Also $\alpha\le1-\beta$, and so the positive
and reflected negative integrals occupy disjoint intervals. These
observations give
\begin{equation}\label{eq:ugradientprimitive}
\begin{split}
 \int_\Omega|u'|^p\dd
 &\le\frac1{2^p}\left[\int_0^\alpha F(t)^p\,dt
          +\int_0^\beta\widetilde F_+(t)^p\,dt\right]\\
 &=\frac1{2^p}\left[\int_0^\alpha F(t)^p\,dt
          +\int_{1-\beta}^1(F(t)-m)_+^p\,dt\right]\\
 &\le\frac1{2^p}\int_0^1F(t)^p\,dt.
\end{split}
\end{equation}

The right side of \eqref{eq:ugradientprimitive} is the
$p$-th power of the derivative norm of $v$, as the following
calculation shows.
Evenness implies $v'(0)=w'(0)=0$, and the equations
$-v^{\prime\prime}=f^*$ and $-w^{\prime\prime}=(-f)^*$ imply
\[
 v'(x)=-\frac12F(2x),\qquad
 w'(x)=-\frac12\widetilde F(2x),\qquad 0<x<\frac12.
\]
Consequently
\begin{align*}
 \int_B|v'|^p\dd =\frac1{2^p}\int_0^1F(t)^p\,dt, \qquad 
 \int_B|w'|^p\dd =\frac1{2^p}\int_0^1|F(t)-m|^p\,dt.
\end{align*}

The first identity and \eqref{eq:ugradientprimitive}
prove the comparison with $v$ for nonnegative mean.
The sign reversal described at the start of the proof gives the
comparison with $w$ for nonpositive mean.

When $m=0$, the two derivative norms coincide. For $m>0$, the function
$y\mapsto y^p-|y-m|^p$ is increasing on $[0,\infty)$ and so since $F(t)\ge mt$, we obtain
\[
 \int_0^1\bigl(F(t)^p-|F(t)-m|^p\bigr)\,dt
 \ge m^p\int_0^1\bigl(t^p-(1-t)^p\bigr)\,dt=0.
\]
Thus $\norm{v'}_{L^p(B)}\ge\norm{w'}_{L^p(B)}$ when $m>0$. The reverse ordering holds when $m<0$.

\subsection*{Where does the proof break down for $p>3$?}

The restriction on $p$ comes from the slope moment inequality
\eqref{eq:slopemoment}. For the special case of a single interval having endpoint slopes of magnitude $\delta_1=e^x$ and $\delta_2=e^{-x}$, the ratio of the left side of \eqref{eq:slopemoment} divided by its right side is
\[
 \frac{\cosh((p-1)x)}{\cosh(x)^{p+1}}
 =1+\frac{p(p-3)}2x^2+O(x^4)\qquad(x\to0) ,
\]
which is greater than $1$ for $x$ sufficiently small. Thus \eqref{eq:slopemoment} fails in general for every $p>3$.

\subsection*{Counterexamples to the gradient comparison when \texorpdfstring{$n\ge2$}{n >= 2}}

Take $p=2$. In every dimension $n\ge2$ the paired gradient comparison fails, because two concentrated sources produce two leading order 
 energies before rearrangement, whereas only one leading order energy remains after rearrangement.

For a concrete example, use the open set $\Omega$, source $f_\epsilon$, and solutions
$u_\epsilon$ and $v_\epsilon=w_\epsilon$ from
the clamped construction in Section~\ref{sec:clampedcounterexamples}, with $n\ge2$. A direct radial calculation gives
\begin{equation*}
 \frac{\int_\Omega|\nabla u_\epsilon|^2\dd}
 {\int_{B_1}|\nabla v_\epsilon|^2\dd}
 \longrightarrow2\qquad(\epsilon\downarrow0).
\end{equation*}
Thus the paired gradient comparison fails in every higher
dimension, even for smooth bounded open sets and pointwise clamped
solutions.

\section*{Funding acknowledgment}

Richard Laugesen's research was supported by grants from the
Simons Foundation (\#964018) and the National Science Foundation
(\#2246537).
Laugesen is grateful to the Laboratoire de Mathématiques, Université
Savoie Mont Blanc, for generous travel support.

Dorin Bucur was partially supported by the ANR project STOIQUES financed by
the French Agence Nationale de la Recherche (ANR-24-CE40-2216).

\section*{Acknowledgment of AI assistance}

The authors conjectured the paired comparison
inequality in Theorem~\ref{thm:planarlp} and explored it
interactively with ChatGPT and Codex. The AI system assisted
in developing and checking mathematical arguments and in constructing and numerically evaluating
examples and counterexamples.

The authors selected the questions to pursue, assessed the
resulting arguments, and take responsibility for all mathematical
statements and proofs.

\begin{quote}
\itshape
A mathematician, an editor, and an AI system walk into a bar.\\
``I conjecture this to be the optimal bar,'' says the mathematician.\\
``Only in the abstract,'' corrects the editor.\\
``Are they hallucinating?'' asks the AI.
\end{quote}


\begin{thebibliography}{99}
\bibitem{AlvinoLionsTrombetti1989}
A.~Alvino, G.~Trombetti and P.-L.~Lions,
\emph{On optimization problems with prescribed rearrangements},
Nonlinear Analysis: Theory, Methods \& Applications \textbf{13} (1989),
no.~2, 185--220.
\url{https://doi.org/10.1016/0362-546X(89)90043-6}

\bibitem{LionsAlvinoTrombetti1990}
A.~Alvino, G.~Trombetti and P.-L.~Lions,
\emph{Comparison results for elliptic and parabolic equations via
Schwarz symmetrization},
Annales de l'Institut Henri Poincar\'e C, Analyse non lin\'eaire
\textbf{7} (1990), no.~2, 37--65.
\url{https://doi.org/10.1016/S0294-1449(16)30303-1}

\bibitem{AshbaughBenguria1995}
M.~S.~Ashbaugh and R.~D.~Benguria,
\emph{On Rayleigh's conjecture for the clamped plate and its
generalization to three dimensions},
Duke Mathematical Journal \textbf{78} (1995), no.~1, 1--17.
\url{https://doi.org/10.1215/S0012-7094-95-07801-6}

\bibitem{ABLL2025}
M.~S.~Ashbaugh, D.~Bucur, R.~S.~Laugesen and R.~Leylekian,
\emph{Fourth order Saint-Venant inequalities:
maximizing compliance and mean deflection among clamped plates},
Communications in Mathematical Physics \textbf{406} (2025), article~256.
\url{https://doi.org/10.1007/s00220-025-05439-7}

\bibitem{AshbaughLaugesen1996}
M.~S.~Ashbaugh and R.~S.~Laugesen,
\emph{Fundamental tones and buckling loads of clamped plates},
Annali della Scuola Normale Superiore di Pisa, Classe di Scienze,
Serie IV, \textbf{23} (1996), no.~2, 383--402.
\url{https://numdam.org/item/ASNSP_1996_4_23_2_383_0/}

\bibitem{BaernsteinDrasinLaugesen2019}
A.~Baernstein II (with D.~Drasin and R.~S.~Laugesen),
\emph{Symmetrization in Analysis},
New Mathematical Monographs, vol.~36, Cambridge University Press,
Cambridge, 2019.
\url{https://doi.org/10.1017/9781139020244}

\bibitem{BucurLaugesenForthcoming}
D.~Bucur and R.~S.~Laugesen,
\emph{Rayleigh’s conjecture for vibrating plates: simpler planar proof and
nonlinear extensions},
in development.

\bibitem{FeroneVolpicelli2003}
A.~Ferone and R.~Volpicelli,
\emph{Minimal rearrangements of Sobolev functions: a new proof},
Annales de l'Institut Henri Poincar\'e C, Analyse non lin\'eaire
\textbf{20} (2003), no.~2, 333--339.
\url{https://doi.org/10.1016/S0294-1449(02)00012-4}

\bibitem{Kesavan2006}
S.~Kesavan,
\emph{Symmetrization \& Applications},
Series in Analysis, vol.~3, World Scientific, Hackensack, NJ, 2006.
\url{https://doi.org/10.1142/9789812773937}

\bibitem{Kristaly2020}
A.~Krist\'aly,
\emph{Fundamental tones of clamped plates in nonpositively curved spaces},
Advances in Mathematics \textbf{367} (2020), 107113.
\url{https://doi.org/10.1016/j.aim.2020.107113}

\bibitem{Kristaly2022}
A.~Krist\'aly,
\emph{Lord Rayleigh's conjecture for vibrating clamped plates in positively curved spaces},
Geometric and Functional Analysis \textbf{32} (2022), 881--937.
\url{https://doi.org/10.1007/s00039-022-00606-7}

\bibitem{Leylekian2024}
R.~Leylekian,
\emph{Des plaques vibrantes: optimisation spectrale pour l'op\'erateur
bilaplacien}, Ph.D. thesis, Aix--Marseille Universit\'e, 2024.
\url{https://hal.science/tel-05096810v1}

\bibitem{McGillivray2017}
I.~E.~McGillivray,
\emph{An unstable two-phase membrane problem and maximum flux exchange flow},
Applied Mathematics \& Optimization \textbf{75} (2017), 365--401.
\url{https://doi.org/10.1007/s00245-016-9335-7}

\bibitem{Nadirashvili1995}
N.~S.~Nadirashvili,
\emph{Rayleigh's conjecture on the principal frequency of the clamped
plate},
Archive for Rational Mechanics and Analysis \textbf{129} (1995),
no.~1, 1--10.
\url{https://doi.org/10.1007/BF00375124}

\bibitem{SaldanaTavares2018}
A.~Salda\~na and H.~Tavares,
\emph{Least energy nodal solutions of Hamiltonian elliptic systems with
Neumann boundary conditions},
Journal of Differential Equations \textbf{265} (2018), no.~12,
6127--6165.
\url{https://doi.org/10.1016/j.jde.2018.07.013}

\bibitem{Talenti1976}
G.~Talenti,
\emph{Elliptic equations and rearrangements},
Annali della Scuola Normale Superiore di Pisa,
Classe di Scienze, Serie IV, \textbf{3} (1976), no.~4, 697--718.
\url{https://numdam.org/item/ASNSP_1976_4_3_4_697_0/}

\bibitem{Talenti1981}
G.~Talenti,
\emph{On the first eigenvalue of the clamped plate},
Annali di Matematica Pura ed Applicata, Serie IV,
\textbf{129} (1981), 265--280.
\url{https://doi.org/10.1007/BF01762146}

\bibitem{Talenti1987}
G.~Talenti,
\emph{Assembling a rearrangement},
Archive for Rational Mechanics and Analysis \textbf{98} (1987),
285--293.
\url{https://doi.org/10.1007/BF00276910}

\bibitem{Talenti2016}
G.~Talenti,
\emph{The art of rearranging},
Milan Journal of Mathematics \textbf{84} (2016), no.~1, 105--157.
\url{https://doi.org/10.1007/s00032-016-0253-6}

\bibitem{VandenBergBucur2020}
M.~van den Berg and D.~Bucur,
\emph{Sign changing solutions of Poisson's equation},
Proceedings of the London Mathematical Society \textbf{121} (2020),
no.~3, 513--536.
\url{https://doi.org/10.1112/plms.12334}
\end{thebibliography}
\end{document}